\documentclass{siamart171218}
\usepackage[top=1in, bottom=1in, left=1in, right=1in]{geometry}

\usepackage{a4wide}

\newcommand{\mat}[1]{\mathbf{#1}}
\newcommand{\vc}[1]{\mathbf{#1}}
\newcommand{\truMat}{\mat{A}}
\newcommand{\sampMat}{\hat{\truMat}}
\newcommand{\rhs}{\vc{b}}
\newcommand{\truSol}{\vc{x}}
\newcommand{\sampSol}{\hat{\truSol}}
\newcommand{\dimension}{n}
\newcommand{\R}{\mathbb{R}}

\newcommand{\N}{\mathbb{N}}
\newcommand{\Rdim}{\R^{\dimension}}
\newcommand{\Rdimdim}{\R^{\dimension \times \dimension}}
\newcommand{\impSol}{\tilde{\truSol}}
\newcommand{\impSolAt}[1]{\impSol_{#1}}
\newcommand{\pdcone}{S_+(\Rdim)}
\newcommand{\err}[2]{\mathcal{E}_{#2}(#1)}
\newcommand{\errId}[1]{\mathcal{E}(#1)}
\newcommand{\errBayes}[2]{\mathcal{E}_{#2}^{\text{Bayes}}(#1)}

\newcommand{\E}{\mathbb{E}}
\newcommand{\normMat}{\mat{B}}
\newcommand{\id}{\mat{I}}
\newcommand{\errMat}{\hat{\mat{Z}}}
\newcommand{\augMat}{\hat{\mat{K}}}
\newcommand{\augFac}{\beta}
\newcommand{\optAugFac}{\augFac^*}

\newcommand{\rhsPrior}{\mathcal{B}}

\newcommand{\autoCor}{\mat{R}}
\newcommand{\modMat}{\mat{C}}
\newcommand{\modMatTwo}{\mat{D}}
\newcommand{\combA}{\mat{L}}
\newcommand{\trvec}{\hat{\vc{v}}}
\newcommand{\truncFac}[1]{\augFac_{#1}}
\newcommand{\window}{\omega}
\newcommand{\windowFunc}[2]{\window^{#1}(#2)}
\newcommand{\windowDenom}{\omega_*}
\newcommand{\windowFuncDenom}[2]{\windowDenom^{#1}(#2)}
\newcommand{\basepointfactor}{\alpha}
\newcommand{\basepointfactorFunc}{\basepointfactor(\sampMat)}
\newcommand{\basepointfactorFuncOf}[1]{\basepointfactor(#1)}
\newcommand{\genWindow}{\Omega}
\newcommand{\genWindowDenom}{\Omega_*}
\newcommand{\genWindowPoly}[1]{\genWindow^{#1}}
\newcommand{\genWindowPolyDenom}[1]{\genWindowDenom^{#1}}
\newcommand{\xMat}{\hat{\mat{X}}}
\newcommand{\sMat}{\mat{S}}
\newcommand{\primmPoly}[1]{\Delta^{#1}}
\newcommand{\primmPolyDenom}[1]{\Delta^{#1}_*}
\newcommand{\basepointerrMat}{\errMat_{\basepointfactor}}
\newcommand{\basepointxMat}{\xMat_{\basepointfactor}}
\newcommand{\windowFuncBP}[2]{\window^{#1}_\basepointfactor(#2)}
\newcommand{\windowFuncBPDenom}[2]{\window^{#1}_{\basepointfactor,*}(#2)}
\newcommand{\genWindowBP}{\Omega_\alpha}
\newcommand{\genWindowDenomBP}{\Omega_{\alpha,*}}
\newcommand{\genWindowBPPoly}[1]{\genWindowBP^{#1}}
\newcommand{\genWindowDenomBPPoly}[1]{\genWindowDenomBP^{#1}}
\newcommand{\primmPolyBP}[1]{\Delta_\basepointfactor^{#1}}
\newcommand{\primmPolyDenomBP}[1]{\Delta_{\basepointfactor,*}^{#1}}
\newcommand{\transformedMean}{\eta}
\newcommand{\sampleTrVec}{\hat{\vc{q}}}
\newcommand{\sampSampleTrVec}[1]{\hat{\vc{q}}_{#1}}

\newcommand{\bootstrapAugFac}{\tilde{\augFac}^*}
\newcommand{\bootstrapSampMat}{\sampMat_b}
\newcommand{\sampBootstrapSampMat}[1]{\sampMat_{b, #1}}
\newcommand{\bootstrapMonteCarloAugFac}{\hat{\augFac}^*}
\newcommand{\accelTruncFac}[1]{\bar{\augFac}_{#1}}
\newcommand{\accelWindowFuncBP}[2]{\window^{#1}_{\basepointfactorFunc}(#2)}
\newcommand{\accelWindowFuncBPDenom}[2]{\window^{#1}_{\basepointfactorFunc,*}(#2)}
\newcommand{\accelBasepointerrMat}{\errMat_{\basepointfactorFunc}}
\newcommand{\bootAccelBasepointerrMat}[1]{\errMat_{b,\basepointfactorFuncOf{#1}}}
\newcommand{\bootAccelWindowFuncBP}[3]{\window^{#1}_{\basepointfactorFuncOf{#3}}(#2)}
\newcommand{\bootAccelWindowFuncBPDenom}[3]{\window^{#1}_{\basepointfactorFuncOf{#3},*}(#2)}
\newcommand{\bootstrapTruncAugFac}[1]{\tilde{\augFac}_{#1}}
\newcommand{\bootstrapMonteCarloTruncAugFac}[1]{\hat{\augFac}_{#1}}
\newcommand{\incidenceMat}{\mat{E}}
\newcommand{\weightMat}{\mat{W}}
\newcommand{\laplacian}{\mat{L}}
\newcommand{\sampLaplacian}{\hat{\laplacian}}
\newcommand{\sampWeightMat}{\hat{\weightMat}}
\newcommand{\truEdge}{w_e}
\newcommand{\sampTruEdge}{\hat{w}_e}
\newcommand{\params}{\omega}
\newcommand{\paramSpace}{\Omega}
\newcommand{\paramDist}[1]{\mathbb{P}_{#1}}
\newcommand{\paramToMat}{\mathcal{M}}
\newcommand{\truParam}{\params^*}
\newcommand{\truMatDist}{D}
\newcommand{\matDistAt}[1]{\truMatDist_{#1}}
\newcommand{\matDist}{\matDistAt{\truParam}}
\newcommand{\sampParam}{\hat{\params}}
\newcommand{\bootstrapDist}{\matDistAt{\sampParam}}

\DeclareMathOperator*{\argmin}{arg\,min}
\DeclareMathOperator*{\var}{var}
\DeclareMathOperator*{\cov}{cov}
\DeclareMathOperator*{\tr}{tr}
\DeclareMathOperator*{\supp}{supp}
\DeclareMathOperator*{\bias}{bias}

\newcommand\independent{\protect\mathpalette{\protect\independenT}{\perp}}
\def\independenT#1#2{\mathrel{\rlap{$#1#2$}\mkern2mu{#1#2}}}



\usepackage{lipsum}
\usepackage{amsfonts, amssymb}
\usepackage{graphicx}
\usepackage{epstopdf}
 \usepackage{algpseudocode}
\usepackage{comment}
\usepackage{cleveref}
\usepackage{thmtools}
\usepackage{dsfont}
\usepackage{thm-restate}

\ifpdf
  \DeclareGraphicsExtensions{.eps,.pdf,.png,.jpg}
\else
  \DeclareGraphicsExtensions{.eps}
\fi


\newsiamremark{remark}{Remark}
\newsiamremark{hypothesis}{Hypothesis}
\crefname{hypothesis}{Hypothesis}{Hypotheses}
\newsiamthm{claim}{Claim}

\headers{Operator Shifting for Noisy Elliptic Systems}{P.A. Etter, L. Ying}

\title{Operator Shifting for Noisy Elliptic Systems}

\author{Philip A. Etter \thanks{Institute for Computational and Mathematical Engineering, Stanford University} (\email{paetter@stanford.edu}) \and Lexing Ying \thanks{Department of Mathematics, Stanford University} (\email{lexing@stanford.edu})}

\usepackage{amsopn}

\makeatletter
\newcommand*{\addFileDependency}[1]{
  \typeout{(#1)}
  \@addtofilelist{#1}
  \IfFileExists{#1}{}{\typeout{No file #1.}}
}
\makeatother





\begin{document}

\maketitle

\begin{abstract}
In the computational sciences, one must often estimate model parameters from data subject to noise and uncertainty, leading to inaccurate results. In order to improve the accuracy of models with noisy parameters, we consider the problem of reducing error in an elliptic linear system with the operator corrupted by noise. We assume the noise preserves positive definiteness, but otherwise, we make no additional assumptions the structure of the noise. Under these assumptions, we propose the \emph{operator shifting} framework, a collection of easy-to-implement algorithms that augment a noisy inverse operator by subtracting an additional shift term. In a similar fashion to the James-Stein estimator, this has the effect of drawing the noisy inverse operator closer to the ground truth by reducing both bias and variance. We develop bootstrap Monte Carlo algorithms to estimate the required shift magnitude for optimal error reduction in the noisy system. To improve the tractability of these algorithms, we propose several approximate polynomial expansions for the operator inverse, and prove desirable convergence and monotonicity properties for these expansions. We also prove theorems that quantify the error reduction obtained by operator shifting. In addition to theoretical results, we provide a set of numerical experiments on four different graph and grid Laplacian systems that all demonstrate effectiveness of our method. 
\end{abstract}

\begin{keywords}
Operator shifting, Random Matrices, Monte Carlo, Polynomial Expansion, Elliptic Systems.
\end{keywords}

\begin{AMS}
Linear and multilinear algebra; matrix theory. Statistics. Computer Science.
\end{AMS}

\section{Introduction}

There are a plethora of different situations in the natural, mathematical, and computer sciences that necessitate computing the solution to a linear system of equations given by
\begin{equation} \label{eq:base_system}
    \truMat \truSol = \rhs \,,
\end{equation}
where $\truMat \in \Rdimdim$ and $\truSol, \rhs \in \Rdim$ for $\dimension \in \N$. When both the matrix $\truMat$ and $\rhs$ are known, there are many decades of research on how to solve the system \cref{eq:base_system} efficiently. Unfortunately, for a variety of reasons, it is often the case that the true matrix $\truMat$ is not known exactly, and must be estimated from data (see \cite{soize2005comprehensive, palmer2005representing}). In this situation, there is an error between the unobserved true matrix $\truMat$ and the matrix $\sampMat$ one constructs from data. The discrepancy between $\truMat$ and $\sampMat$ is often referred to as \emph{model uncertainty}, as it stems from incomplete or inaccurate information about the underlying system. This model uncertainty means that with naive application of the inverse of the observed matrix $\sampMat$, one is not solving the desired system \cref{eq:base_system}, but rather, the system 
\begin{equation} \label{eq:noisy_system}
    \sampMat \sampSol = \rhs \,,
\end{equation}
where $\sampSol = \sampMat^{-1} \rhs \in \Rdim$ is the solution we observe when we solving the observed system naively. Often, we will write
\begin{equation}
    \sampMat = \truMat + \errMat\,,
\end{equation}
where one can think of the matrix $\errMat$ as constituting the noise or sampling error in our measurements of the system \cref{eq:base_system}. Hence, the sampling error $\errMat$ between $\truMat$ and $\sampMat$ translates into error between the true solution $\truSol$ and the naively estimated solution $\sampSol$. 

The question of interest in this paper is whether, using the information available to us, we can find a better approximation $\impSol$ for the true solution $\truSol$ by modifying how we solve the sampled system \cref{eq:noisy_system}. ``Better'' here means in the sense of average error measured in the norm of some symmetric positive definite matrix $\normMat$, i.e., that we have
\begin{equation}
    \err{\impSol}{\normMat} < \err{\sampSol}{\normMat} \,,
\end{equation}
where the error functional $\err{\cdot}{\normMat}$ is defined as
\begin{equation} \label{eq:errDef}
    \err{\sampSol}{\normMat} \equiv \E[\|\sampSol - \truSol\|_{\normMat}^2] = \E[\| \sampMat^{-1} \rhs - \truMat^{-1} \rhs\|_\normMat^2 ] \,,
\end{equation}
where the norm $\|\cdot\|_\normMat$ is defined $\| \vc{x} \|_\normMat^2 = \vc{x}^T \normMat \vc{x}$. The two norms of particular interest to us are the $L^2$ norm (for obvious reasons), i.e., $\normMat = \id$, as well as the energy norm, i.e., $\normMat = \truMat$, as the latter is an important metric of error in many physical problems. 

Many traditional techniques approach this problem by imposing Bayesian regularization conditions on the sampled solution $\sampSol$ (e.g., Tikhonov regularization \cite{tikhonov1963solution}) or applying post-processing on $\sampSol$. In this paper, we take a fundamentally different tact. Instead of thinking about the problem of improving the individual estimates $\sampSol$ of solutions $\truSol$, we propose herein a framework for thinking about the problem in terms of linear operators. We content that this paradigm shift is quite useful --- as it is often the case that one may be interested in solving more than just one system of the form \cref{eq:noisy_system} given a single estimate $\sampMat$ of the matrix $\truMat$. In this situation, it often makes more sense to think of improving the estimator $\sampMat^{-1}$ rather than improving individual estimators $\sampSol$, although the two are obviously related. In light of this, we will amend our earlier objective \cref{eq:errDef} slightly. Namely, instead of achieving low error on just a single right-hand side $\rhs$, we want to simultaneously perform well on a whole collection of possible right-hand sides of interest. For this reason, we suppose that $\rhs$ is sampled from a distribution $\rhsPrior$ and that our goal is to reduce the average error over this distribution,
\begin{equation}
    \E_{\rhs \sim \rhsPrior} \E_{\sampMat} [\| \sampMat^{-1} \rhs - \truMat^{-1} \rhs\|_\normMat^2 ] \,.
\end{equation}

In the interest of building out this new perspective, we propose a novel method we call \emph{operator shifting}. The idea of operator shifting is to add an augmenting term to the sampled inverse operator $\sampMat^{-1}$, yielding a family of operators
\begin{equation}
    \sampMat^{-1} - \augFac \augMat(\sampMat^{-1})
\end{equation}
parameterized by an shift factor $\augFac \in \R$, for a choice of shift operator $\augMat(\sampMat^{-1}) \in \Rdimdim$ depending on the problem setting. Note that the shift operator is a function of the sampled matrix $\sampMat$. Our new approximation for $\truSol$ is then given by
\begin{equation} \label{eq:opAug}
    \impSolAt{\augFac} =  (\sampMat^{-1} - \augFac \augMat) \rhs
\end{equation}
Through judicious selection of the shift operator $\augMat$, we show that one can estimate a $\augFac$ that will reduce error by a factor that depends on the variance of the naive solution $\sampSol$. As we will see, the power of operator shifting lies in the fact that the technique works under very minimal assumptions on the randomness structure of $\sampMat$; in general, the only assumption we need to guarantee error reduction is that $\sampMat$ is an unbiased estimator of $\truMat$, and even this assumption can be relaxed.

The most obvious choice of shift operator is perhaps to shift the naive estimate $\sampMat^{-1}$ towards the origin by taking $\augMat(\sampMat) = \sampMat^{-1}$. There are two fundamental reasons why one might expect this to be a good choice of shift --- the first concerns the bias of the estimate $\sampMat^{-1}$ and the second concerns the variance.

\begin{enumerate}
    \item \textbf{Bias}: For symmetric positive definite matrices, the matrix inversion operation is convex with respect to the L\"owner order\footnote{Recall the definition of the L\"owner order: $A \preceq B$ when $x^T A x \leq x^T B x$ for all vectors $x \in \Rdim$.}. A matrix analogue of Jensen's inequality therefore suggests that, depending on the variance in $\sampMat$, $\sampMat^{-1}$ will substantially overestimate $\truMat^{-1}$ on average (i.e., $\E [ \sampMat^{-1}] \succeq \truMat^{-1}$). Hence, it makes to shift $\sampMat^{-1}$ towards the origin in order to reduce the bias in $\sampMat^{-1}$. We provide an illustration of this bias in \cref{fig:jensen}.
    \item \textbf{Variance}: Shrinking the estimate towards a fixed point (i.e., the origin) simultaneously has the effect of reducing variance in the estimator. This is analogous to the seminal work of James and Stein \cite{james1992estimation} that demonstrated the standard mean estimator is inadmissible, as shrinking the estimator slightly towards the origin always reduces average error.
\end{enumerate}
Therefore, the confluence of these two factors suggest that we should expect a reduction in both bias and variance, and hence a more accurate estimator as a result. Indeed, in this paper we prove that, with only minimal assumptions on the randomness of $\sampMat$, that the optimal reduction in error always comes from a shift towards the origin. 

\begin{figure}
    \centering
    \includegraphics[scale=0.25]{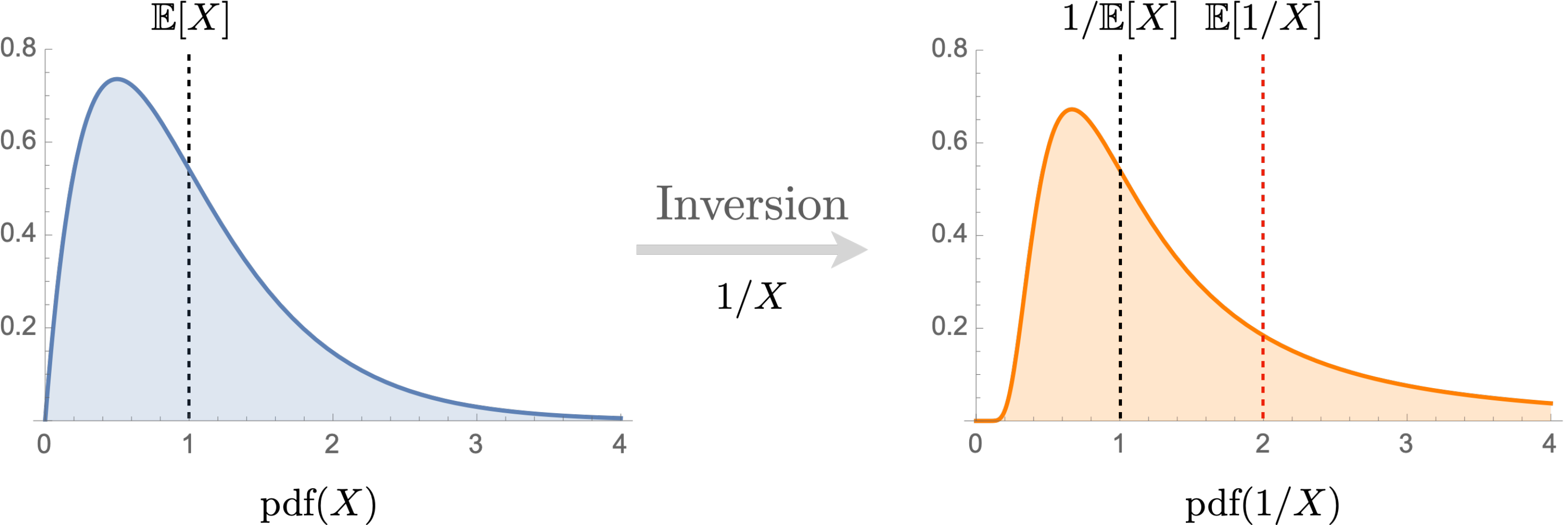}
    \caption{An example of the overshooting effect. If we take a single sample of the of the scalar random variable $X \sim \Gamma(2, 1/2)$, and invert it, the pdf of the inverted $1/X$ has an expectation that is significantly larger (x2) than the inversion $1/\E[X]$. This means that naively trying to estimate $1/\E[X]$ with only a single sample will likely give a significant overestimate. The same principle also applies when $X$ is a random matrix.}
    \label{fig:jensen}
\end{figure}

We structure the remainder of the paper as follows. In \cref{sec:opAugmentation}, we present the basic operator shifting formalism and examine the special case where $\rhs$ is deterministic to motivate the full operator shifting technique. In \cref{sec:semiBayesian}, we present the full version of operator shifting in the setting where the vector $\rhs$ is now also drawn from a probability distribution $\rhsPrior$. We prove bounds that quantify how much error our technique can reduce. In \cref{sec:energyNorm}, we consider operator shifting in the aforementioned energy norm, i.e., when $\normMat = \truMat$, and prove similar bounds. In \cref{sec:taylorApprox}, we show that the energy norm has special monotonicity properties that are immensely useful for efficiently computing a good choice of $\augFac$. Finally, in \cref{sec:numerics}, we present numerical experiments to verify the theoretical results in this paper.

Note that we consider only elliptic systems in this paper, i.e., requiring that $\truMat$ is symmetric positive definite and $\sampMat$ is symmetric positive definite almost surely --- however, one could theoretically apply the techniques we present herein to asymmetric systems as well, but we do not provide any theoretical guarantees in the asymmetric case.

Finally, in order to help readers quickly implement our method without getting caught up in all of the surrounding mathematical details, we provide the quick start \cref{sec:quickstart} to give readers an alternate entry point to the algorithm we present in this paper. For the accompanying source code for this paper, please see \cref{sec:source}.

\section{Related Work}

The spirit of our approach is heavily influenced by James-Stein Estimation \cite{james1992estimation}. In Stein's original paper, \cite{stein1956inadmissibility}, he demonstrated the (at the time) shocking phenomenon that the standard mean estimator is actually \emph{inadmissible} for the quadratic loss in dimensions $\geq 3$. The reason behind this has to do with the fact that one can always advantageously trade off bias for a reduction in variance by shrinking the estimator towards any fixed point. At a fundamental level, one can frame our work as taking this idea and applying it to the novel setting of matrices corrupted by noise.

Some particularly relevant work pertains to debiasing distributed second-order optimization. Second-order optimization methods often rely on solving a symmetric linear system involving the Hessian of an object (positive-definite if the objective is strongly convex). However, in many machine learning applications, the objective is composed of a summation of terms over a massive corpus of data, such that computing the true Hessian is extremely expensive. Instead, practitioners often turn to stochastic optimization methods that subsample the objective and its derivatives by using only a small section of the corpus at a time. However, for an optimization problem given by
\begin{equation}
    \min_x \sum_{i = 0}^m f_i(x) \,,
\end{equation}
the tru Hessian and approximated Hessian are given as follows:
\begin{equation} \label{eq:hessianestimate}
    \mat{H} = \sum_{i = 0}^m \mat{H}_i \, \qquad \hat{\mat{H}} = \sum_{i = 0}^m \frac{\hat{p}_i}{\E[\hat{p}_i]} \mat{H}_i \,,
\end{equation}
where $\mat{H}_i$ is the Hessian of $f_i$ and $p_i \in \{0, 1\}$ is a random variable that determines if the $i$-th item in the corpus is in the current mini-batch. The naive estimator $\hat{\mat{H}}$ has an upward bias and there has been work in the literature on how to de-bias the estimator using determinantal averaging \cite{derezinski2019distributed}. However, this approach is fundamentally limited to matrix ensembles of the form \cref{eq:hessianestimate}. In this paper, the types of noise we consider are far more general. 

Other relevant work has been done in the field of matrix sketching. Matrix sketching is a technique to reduce the complexity of a least-squares/linear problem by using random sketches of the rows/columns of the matrix. This process can likewise produce estimates that are substantially biased. One can attempt to address this bias by modifying regularization or other problem parameters \cite{derezinski2020debiasing}. This can be applied for the aforementioned second order optimization problem by using a Hessian sketch. However, again, the technique is tied to a very specific type of matrix noise. 

Beyond the world of James-Stein estimation and operator de-biasing, there are a number of immediate connections between the work done herein and previous work in the field of statistical inverse problems. In various inverse problems, one is interested in estimation from noisy or incomplete measurements. For example, \emph{semi-blind deconvolution} involves trying to reconstruct a function convolved with a kernel where the kernel is known, but with some uncertainty. Note that this is distinct from fully \emph{blind deconvolution} where one has no information about the kernel. In the sense that the measurement operator is corrupted by noise or uncertainty, and the goal is to recover the underlying object by inverting a linear system, this setting is quite similar to our own and hence worth mentioning. 

A common approach to these problems is to induce regularization on both the operator and the recovery target. For example, Total Least Squares algorithms as pioneered by Golub and Van Loan \cite{golub1980analysis} optimize over small perturbations to the noisy operator as well as the linear regression weights. Similar approaches specific to semi-blind deconvolution includes introducing a free estimate of the underlying kernel with regularization to match the observed data \cite{buccini2018semiblind}. Another technique in semi-blind deconvolution is to treat the full operator as a free variable and introduce optimization constraints to make sure that the operator and the observations do not deviate by too much \cite{bleyer2013double}.

Unfortunately, these types of techniques that operator over the operator suffer from a number of flaws. The most obvious is that introducing $\sim \dimension^2$ additional free variables into an optimization problem also introduces a substantial additional computational cost. Along with this computational cost also comes a much more severe chance of over-fitting unless regularization is handled appropriately. Furthermore, these regularization techniques implicitly depend on good Bayesian priors for what the underlying target and the operator should look like. In the absence of good priors, this optimization avenue may not be as viable. In contrast, all optimizations performed in the operator shifting framework we present here are only over a single variable $\augFac$, and hence are not subject to these concerns.

Other situtations in the statistical inverse problem literature that involve noisy or uncertain operators include circumstance where the forward operator may be far too expensive to apply directly, and hence must be replaced by a learned proxy for efficient computation \cite{lunz2021learned}. Another setting in the literature is when one has a set of noisy input-output pairs of the underlying operator. Work has been done on using these input-output pairs to construct regularizers for solving the inverse problem \cite{aspri2020data}.  Nonetheless, this approaches and settings are quite different from the approach and setting we present in this paper.  

Beyond the field of statistical inverse problems, a pertinent area of the literature related to our work is \emph{model uncertainty}. Quantifying and representing model uncertainty is important in many different fields of computational science, ranging from structural dynamics \cite{soize2005comprehensive} to weather and climate prediction \cite{palmer2005representing}. However, work relating to model or parameter uncertainty is usually domain specific and focuses more on establishing a model for uncertainty than it does on trying to reduce error in the resulting predictions. In contrast, our work focuses entirely on reducing error, rather than quantifying it. Our work is also not restricted to a particular domain, class of problems, or randomness structure, as long as those problems are linear. 

We note that our setting shares some similarities with the problem of uncertainty quantification (UQ). However, the problem we face here is different from the standard uncertainty quantification setting in a subtle but very important way. In UQ, one is usually given a distribution $\mathcal{P}$ and a map $T$ and asked to estimate statistics about the pushforward distribution $T_* \mathcal{P}$ (i.e., expectation, standard error, etc.). Practitioners typically accomplish this task via Monte Carlo techniques \cite{marzouk2016introduction} or some form of stochastic Galerkin projection \cite{xiu2002wiener} or collocation method \cite{xiu2005high}. However, for our purposes, we are more interested in the image of the statistic $\E[\sampMat] = \truMat$ under matrix inversion, rather than quantifying the pushforward of the distribution of $\sampMat$ under matrix inversion.

The central problem in this paper is also not dissimilar to the setting of matrix completion seen in \cite{candes2010matrix, keshavan2009matrix}. In matrix completion, one usually seeks to recover a low-rank ground truth matrix $\mat{M}_{ij}$ from observations that have been corrupted by additive noise, e.g., $\mat{N} = \mat{M} + \mat{Z}$. If $\mathcal{P}_{\Omega}$ denotes the subset sampling operator on matrix space, then one is trying to recover $\mat{M}$ from
\begin{equation}
    \mathcal{P}_{\Omega}(\mat{N}) = \mathcal{P}_{\Omega}(\mat{M}) + \mathcal{P}_{\Omega}(\mat{Z}) \,.
\end{equation}
However, the operator shifting and matrix completion settings are subtly different. The matrix completion analogue of $\truMat$ is the actual linear operator $\mathcal{P}_{\Omega}$, and not the matrix $\mat{M}$. Morally, one may think of the matrix competition problem as solving the under-determined linear system
\begin{equation}
    \mathcal{P}_\Omega (\mat{M}) = \mathcal{P}_{\Omega} (\mat{N}) \,
\end{equation}
by assuming a low-rank regularity on $\mat{M}$. The randomness in this problem lies completely in the right hand side $\mat{N}$, and not in the actual linear operator $\mathcal{P}_{\Omega}$.

We also draw attention to the related field of perturbation matrix analysis. In this setting, one is usually interested in proving results about how various properties of matrices change under a perturbation to the elements of the matrix. A seminal example of work in this field is the Davis-Kahan Theorem \cite{davis1970rotation}, which quantifies the extent to which the invariant sub-spaces of a matrix change under perturbations. In a similar vein, work in backward stability analysis revolves around understanding the behavior of the solution of a linear system under perturbations to the matrix. However, backward stability analysis typically adopts a worst-case mentality in analysis. In contrast, we care about average case error --- and more importantly, how one can reduce it.

We should briefly mention that the mathematical branch of random matrix theory (RMT) studies the spectral properties of random matrix ensembles \cite{anderson2010introduction, tao2012topics}. However, RMT results usually apply only when the entries of the random matrices are independent and in the large matrix limit. We find these assumptions to be too stringent for the problem at hand.

In addition to these tangentially related settings, we also call attention to the similarity of some of our techniques to those in harmonic analysis. It is well known that the method of summation of an infinite series can affect the conditions under which it convergence, as well as the quality of the convergence. For example, the Fourier series of a continuous function $f$ on the unit interval $[0, 1]$ may not converge pointwise to $f$ if summed naively. But Fej\'er's Theorem (see  \cite{stein2011fourier}) states that C\'esaro and Abel sums of the Fourier series of an integrable function $f$ converge uniformly to $f$ at any point of continuity. Our work takes on a similar favor in that it revolves heavily around the convergence properties of partial sums of infinite series expansion of the matrix function $f(\mat{A}) = \mat{A}^{-1}$. These partial sums are critical to accelerating an otherwise expensive Monte Carlo computation, hence we develop methods of partial summation that have desirable properties --- such as convergence and monotonicity.

In conclusion, we do not believe that the setting we introduce in this paper, where the operator is noisy, has been studied in the proposed fashion before. There is little precedent in the literature for the operator shifting method we present herein.

\section{Basic Assumptions and Notation} \label{sec:assumptions}

For the sake of transparency, before we go any further, we will make a number of assumptions on the nature of randomness on $\sampMat$ --- as this will help clarify the setting. We will use $\truMatDist$ to denote the distribution of $\sampMat$. Throughout this paper, we will use $\pdcone$ to denote the set of symmetric positive definite matrices in $\Rdimdim$. We make the following extremely lax assumptions about the randomness of $\sampMat$:

\begin{enumerate}
    \item \emph{Almost-Surely Positive Definite}: We assume that $\sampMat \in \pdcone$ almost surely. We believe this is a very reasonable assumption, if $\sampMat$ is generated from an elliptic problem whose parameters are subject to noise, it is extremely unlikely that any value of the underlying problem parameters will destroy ellipticity.
    \item \emph{Unbiased, or Downward-Biased}: We assume that $\sampMat$ is an unbiased estimate of $\truMat$, i.e., that $\E[\sampMat] = \truMat$. More generally, all of the machinery applies equally well when $\E[\sampMat] \preceq \truMat$.
    \item \emph{Finiteness of the Inverse Second Moment}: We assume that $\E[\sampMat^{-2}] \prec \infty$. Note that this is necessary to ensure that our measure of error \cref{eq:errDef} actually exists for arbitrary choice of $\rhs$. 
\end{enumerate}
We note that these assumptions are surprisingly lax. Most importantly, we \emph{do not} assume that entries of $\sampMat$ are independent. In the context of the theory to be presented herein, this assumption is irrelevant and not needed. Moreover, for all of the numerical examples we present, the entries of $\sampMat$ will in fact be correlated random variables. We believe this helps reinforce the generality of the operator shifting framework.

\section{Warm-up: Deterministic Right-Hand Side} \label{sec:opAugmentation}

Let us suppose that $\sampMat \sim \truMatDist$ satisfies the conditions outlined in the previous section. A substantial amount of the theory in the subsequent sections is simply a generalization of the case where $\rhs$ is a deterministic vector. We also only consider the $L^2$ error norm for now, i.e., $\normMat = \id$, and simply write $\errId{\cdot}$ for the $L^2$ error $\err{\cdot}{\id}$. We hope these simplified assumptions will help us easily communicate the core idea of the proof we use in the subsequent sections.

Supposing the above, operator shifting operates by finding a good choice of shift factor $\augFac$ to minimize error. Indeed, the choice of $\augFac$ we would like to make is the minimizer of the error,
\begin{equation}
    \optAugFac = \argmin_{\augFac \in \R} \errId{(\sampMat^{-1} - \augFac \augMat)\rhs } \,.
\end{equation}
In particular, note that under the choice of 1-parameter family in \cref{eq:opAug}, the error functional becomes a quadratic in $\augFac$,
\begin{equation}
\begin{split}
    \errId{(\sampMat^{-1} - \augFac \augMat)\rhs} &=  \E \| (\sampMat^{-1} - \augFac \augMat) \rhs - \truMat^{-1} \rhs \|_2^2 \\
    &= \E \| \sampMat^{-1} \rhs - \truMat^{-1} \rhs \|_2^2 - 2 \augFac \E \left[ \rhs^T \augMat^T (\sampMat^{-1} - \truMat^{-1}) \rhs \right] + \augFac^2 \E \| \augMat \rhs \|_2^2 \\
    &= \errId{\sampMat^{-1} \rhs} - 2 \augFac \E \left[ \rhs^T \augMat^T (\sampMat^{-1} - \truMat^{-1}) \rhs \right] + \augFac^2 \E \| \augMat \rhs \|_2^2
\end{split}
\end{equation}
Hence, the optimal choice of $\augFac$ is given by
\begin{equation} \label{eq:optAugFac}
    \optAugFac = \frac{\E \left[ \rhs^T \augMat^T (\sampMat^{-1} - \truMat^{-1}) \rhs \right]}{ \E \left[ \| \augMat \rhs \|_2^2 \right]} \,.
\end{equation}
And under this optimal choice of $\optAugFac$, we see a reduction of error given by
\begin{equation} \label{eq:optErrRed}
    \errId{(\sampMat^{-1} - \augFac \augMat)\rhs} = \errId{\sampMat^{-1} \rhs} - \frac{\E \left[ \rhs^T \augMat^T (\sampMat^{-1} - \truMat^{-1}) \rhs \right]^2}{ \E \left[ \| \augMat \rhs \|_2^2 \right]} \,,
\end{equation}
where note that $\errId{\sampMat^{-1} \rhs}$ is the error when solving the system naively using $\sampMat$. Unfortunately, this choice of $\optAugFac$ depends on knowledge of the true inverse matrix $\truMat^{-1}$ and hence cannot be implemented exactly. We will address this issue later by using a bootstrap Monte Carlo technique -- for now, let us try to develop bounds for $\optAugFac$ as well as the optimal reduction in error.

In accordance with the intuition we presented in the introduction, in practice one observes that the sampling error in the matrix $\sampMat$ causes the $\sampSol$ to have a tendency to overshoot the true solution $\truSol$. Hence, a reasonable thing one might try to correct for this is to uniformly scale the entries of the estimated solution $\sampSol$ by some scalar value in the range $[0, 1]$. Note that this would correspond to a choice of shift operator and factor given by
\begin{equation}
    \augMat = \sampMat^{-1}, \qquad \augFac \geq 0 \,.
\end{equation}
Indeed, under this choice of $\augMat$, the numerator of \cref{eq:optAugFac} becomes
\begin{equation} \label{eq:upper}
    \E \left[ \rhs^T \augMat^T (\sampMat^{-1} - \truMat^{-1}) \rhs \right] = \rhs^T ( \E [ \sampMat^{-T} \sampMat ] - \E [\sampMat^{-T}] \truMat^{-1}) \rhs \,.
\end{equation}
This expansion above is strongly suggestive of the moment formula for covariance matrices, given by
\begin{equation}
    \E [\hat{\mat{Y}}^T \hat{\mat{Y}}] - \E[\hat{\mat{Y}}]^T \E[\hat{\mat{Y}}] = \cov(\hat{\mat{Y}}) \succeq 0 \,,
\end{equation}
one might suspect that it would therefore be possible to lower bound the troublesome term in \cref{eq:upper} by something that depends on the variance of the estimated solution $\sampMat^{-1} \rhs = \sampSol$ and not directly on $\truMat$. Unfortunately, the asymmetry of \cref{eq:upper} and the fact that $\E[\sampMat^{-1}] \neq \truMat^{-1}$ makes this difficult.

However, there is --- fortunately --- something we can say about the quantity $\E[\sampMat^{-1}]$ under mild assumptions, namely,

\begin{restatable}[L\"owner Order Inversion]{lemma}{loewnerinversionlemma} \label{lem:loewner}
Suppose that $\truMat \in \pdcone$ and $\sampMat \in \pdcone$ almost surely. Moreover, suppose that, $\truMat$ spectrally dominates $\sampMat$ in expectation, i.e.,
\begin{equation}
    \E[ \sampMat ] \preceq \truMat \,,
\end{equation}
then, matrix inversion inverts the expected L\"owner order, i.e.,
\begin{equation}
    \E[ \sampMat^{-1} ] \succeq \truMat^{-1}
\end{equation}
\end{restatable}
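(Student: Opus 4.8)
The plan is to combine two classical facts about the L\"owner order. The first is an operator Jensen inequality for the inversion map, which yields $\E[\sampMat^{-1}] \succeq (\E[\sampMat])^{-1}$; the second is the antitonicity of matrix inversion, which turns the hypothesis $\E[\sampMat] \preceq \truMat$ into $(\E[\sampMat])^{-1} \succeq \truMat^{-1}$. Chaining the two gives $\E[\sampMat^{-1}] \succeq (\E[\sampMat])^{-1} \succeq \truMat^{-1}$, which is the assertion. Before starting I would dispense with two bookkeeping points: we may assume $\E[\sampMat^{-1}]$ is finite, since otherwise the conclusion holds vacuously in the affected directions; and $\E[\sampMat] \succ 0$, because it is positive semidefinite as an average of positive definite matrices, and if $\vc{v}^T \E[\sampMat] \vc{v} = 0$ for some $\vc{v} \ne 0$, then $\vc{v}^T \sampMat \vc{v} = 0$ almost surely, contradicting $\sampMat \in \pdcone$ a.s. In particular $(\E[\sampMat])^{-1}$ is well defined.

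For the first inequality, the key observation is the variational identity
\[
    \vc{v}^T \mat{X}^{-1} \vc{v} = \sup_{\vc{w} \in \Rdim}\left( 2\vc{v}^T \vc{w} - \vc{w}^T \mat{X} \vc{w}\right), \qquad \mat{X} \in \pdcone,
\]
with the supremum attained at $\vc{w} = \mat{X}^{-1}\vc{v}$. For each fixed $\vc{v}$, the right-hand side exhibits $\mat{X} \mapsto \vc{v}^T \mat{X}^{-1} \vc{v}$ as a pointwise supremum of affine functions of $\mat{X}$, hence a convex function on $\pdcone$ (equivalently, $t \mapsto t^{-1}$ is operator convex). Applying the scalar Jensen inequality to this convex function along the distribution $\matDist$, and using linearity of expectation in the matrix argument on the left, gives
\[
    \vc{v}^T \E[\sampMat^{-1}] \vc{v} = \E\!\left[\vc{v}^T \sampMat^{-1} \vc{v}\right] \;\geq\; \vc{v}^T (\E[\sampMat])^{-1} \vc{v}.
\]
Since $\vc{v} \in \Rdim$ is arbitrary, this is precisely $\E[\sampMat^{-1}] \succeq (\E[\sampMat])^{-1}$.

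For the second inequality I would record that inversion reverses the L\"owner order on $\pdcone$: if $0 \prec \mat{B} \preceq \mat{C}$, then conjugating by the positive definite matrix $\mat{C}^{-1/2}$ gives $\mat{C}^{-1/2}\mat{B}\mat{C}^{-1/2} \preceq \id$, so the eigenvalues of the positive definite matrix $\mat{C}^{-1/2}\mat{B}\mat{C}^{-1/2}$ lie in $(0,1]$; its inverse $\mat{C}^{1/2}\mat{B}^{-1}\mat{C}^{1/2}$ then has all eigenvalues in $[1,\infty)$, i.e.\ $\mat{C}^{1/2}\mat{B}^{-1}\mat{C}^{1/2} \succeq \id$, and conjugating back by $\mat{C}^{-1/2}$ yields $\mat{B}^{-1} \succeq \mat{C}^{-1}$. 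Taking $\mat{B} = \E[\sampMat]$ and $\mat{C} = \truMat$, which are both in $\pdcone$ with $\mat{B} \preceq \mat{C}$ by hypothesis, gives $(\E[\sampMat])^{-1} \succeq \truMat^{-1}$. Combining with the first inequality finishes the proof.

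The main obstacle I anticipate is the first step: one must identify the correct convexity statement --- convexity of the scalar quadratic form $\vc{v}^T \mat{X}^{-1}\vc{v}$ in $\mat{X}$, equivalently operator convexity of $t \mapsto t^{-1}$ --- and make the passage from a general distribution $\matDist$ to Jensen rigorous, in particular handling the case where $\E[\sampMat^{-1}]$ is not finite and confirming that $\E[\sampMat]$ is genuinely invertible. Once the variational identity is in place, the remainder is a short congruence-and-eigenvalue computation.
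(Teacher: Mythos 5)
Your proof is correct, but it takes a different route from the one the paper actually writes down. The paper's appendix proof is a one-step tangent-plane argument: it writes the exact second-order Taylor expansion $\sampMat^{-1} = \truMat^{-1} - \truMat^{-1}(\sampMat - \truMat)\truMat^{-1} + \truMat_*^{-1}(\sampMat-\truMat)\truMat_*^{-1}(\sampMat-\truMat)\truMat_*^{-1}$ with $\truMat_*$ ``between'' $\truMat$ and $\sampMat$, observes the remainder is positive semidefinite, drops it, takes expectations, and uses $\E[\sampMat-\truMat]\preceq \mat{0}$ to discard the linear term. You instead factor the statement through the intermediate matrix $(\E[\sampMat])^{-1}$: operator Jensen via the variational identity $\vc{v}^T\mat{X}^{-1}\vc{v} = \sup_{\vc{w}}(2\vc{v}^T\vc{w}-\vc{w}^T\mat{X}\vc{w})$ gives $\E[\sampMat^{-1}]\succeq(\E[\sampMat])^{-1}$, and antitonicity of inversion handles the hypothesis. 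Both are sound; your decomposition buys a stronger intermediate fact (the operator Jensen inequality itself) and each step is a fully elementary classical lemma --- in particular, picking the single test point $\vc{w}=(\E[\sampMat])^{-1}\vc{v}$ before taking expectations makes Jensen rigorous without any measure-theoretic fuss, and your checks that $\E[\sampMat]\succ 0$ and that the unbounded case is harmless are careful touches the paper omits. What the paper's route buys is brevity and the avoidance of the intermediate inverse entirely, at the cost of invoking a matrix mean-value form of the Taylor remainder whose precise meaning (``a matrix between $\truMat$ and $\sampMat$'') is arguably less self-evident than your sup-of-affine-functions convexity argument; notably, the paper's own prose preceding the lemma advertises exactly the convexity fact you use, even though its appendix executes the Taylor version.
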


The proof of this fact is given in the appendix and relies on the fact that the function $\vc{u}^T \truMat^{-1} \vc{u}$ for arbitrary $\vc{u} \in \Rdim$ is convex in $\truMat$ when $\truMat \in \pdcone$. We would like to use this fact that $\truMat^{-1} \preceq \E[\sampMat^{-1}]$ and say that the right hand side of \cref{eq:upper} is bounded below by $\rhs^T \cov(\sampMat) \rhs > 0$ --- however, this would be incorrect. Indeed, the fact that the matrix $\E [\sampMat^{-T}] \truMat^{-1}$ in \cref{eq:upper} is not symmetric makes this line of inquiry difficult. 

Therefore, instead of shrinking the entire estimated solution $\sampSol$ (a strategy we will return to later), let us consider simply shrinking the vector $\sampSol$ along the $\rhs$ component (for reasons that will soon be apparent), i.e.,
\begin{equation}
    \impSolAt{\augFac} = \sampSol - \augFac \frac{1}{\|\rhs\|_2^2} \rhs \rhs^T \sampSol \,.
\end{equation}
This move corresponds to choosing
\begin{equation}
     \augMat = \frac{1}{\|\rhs\|_2^2} \rhs \rhs^T \sampMat^{-1}\,.
\end{equation}
Note that, from \cref{eq:optAugFac}, the optimal choice of $\augFac$ is now given by
\begin{equation}
    \optAugFac = \frac{\E [(\rhs^T \sampMat^{-1} \rhs)^2 ] - \E[\rhs^T \sampMat^{-1} \rhs] (\rhs^T \truMat^{-1} \rhs)}{\E[(\rhs^T \sampMat^{-1} \rhs)^2] } \equiv \frac{\xi(\sampMat^{-1})}{\E[(\rhs^T \sampMat^{-1} \rhs)^2]}\,,
\end{equation}
with corresponding error from \cref{eq:optErrRed},
\begin{equation} \label{eq:errRed1}
    \errId{\sampMat^{-1} - \optAugFac \augMat} = \errId{\sampMat^{-1}} - \frac{1}{\| \rhs\|_2^2 } \frac{\xi(\sampMat^{-1})^2}{ \E[(\rhs^T \sampMat^{-1} \rhs)^2]} \,,
\end{equation}
By the unbiased-ness assumption $\E[\sampMat] = \truMat$ (or $\E[\sampMat] \preceq \truMat$), so from \cref{lem:loewner}, it immediately follows that
\begin{equation} \label{eq:flipbound}
    \rhs^T \truMat^{-1} \rhs \leq \E[ \rhs^T \sampMat^{-1} \rhs] \,.
\end{equation}
And therefore,
\begin{equation}
   -\E[ \rhs^T \sampMat^{-1} \rhs]  (\rhs^T \truMat^{-1} \rhs) + \E[ \rhs^T \sampMat^{-1} \rhs]^2 \geq 0 \,.
\end{equation}
Thus, adding the left hand of the above inequality to $\xi$ gives us that:
\begin{equation}
\begin{split}
    \xi(\sampMat^{-1}) &= \E [(\rhs^T \sampMat^{-1} \rhs)^2 ] - \E[\rhs^T \sampMat^{-1} \rhs] (\rhs^T \truMat^{-1} \rhs) \\
    &\geq \E [(\rhs^T \sampMat^{-1} \rhs)^2 ] - 2 \, \E[\rhs^T \sampMat^{-1} \rhs] (\rhs^T \truMat^{-1} \rhs) + \E[ \rhs^T \sampMat^{-1} \rhs]^2 \\
    &= \E[(\rhs^T \sampMat^{-1} \rhs - \rhs^T \truMat^{-1} \rhs)^2] \\
    &= \errId{\rhs^T \sampMat^{-1} \rhs}
\end{split}
\end{equation}
where here $\errId{\rhs^T \sampMat^{-1} \rhs}$ denotes the mean-squared error of the estimator $\rhs^T \sampMat^{-1} \rhs$ for the quantity $\rhs^T \truMat^{-1} \rhs$. 

Ergo, we immediately have that
\begin{equation}
    1 \geq \optAugFac \geq \frac{\errId{\rhs^T \sampMat^{-1} \rhs}}{\E[(\rhs^T \sampMat^{-1} \rhs)^2]} \geq 0 \,,
\end{equation}

Applying the same inequality to \cref{eq:errRed1} we get a bound on the optimal reduction in error,
\begin{equation} \label{eq:errRed2}
    \errId{(\sampMat^{-1} - \optAugFac \augMat)\rhs} \leq \errId{\sampMat^{-1}\rhs} - \frac{1}{\|\rhs\|^2} \frac{\errId{\rhs^T \sampMat^{-1} \rhs}^2 }{ \E[(\rhs^T \sampMat^{-1} \rhs)^2]} \,,
\end{equation}

But before discussing the computation of $\optAugFac$, let us restate the above result in a theorem:

\begin{theorem} \label{thm:simpleAug}
Under the assumptions in \cref{sec:assumptions}, consider the operator shifting algorithm with the choice $\augMat = \frac{1}{\|\rhs\|_2^2} \rhs \rhs^T \sampMat^{-1}$. We have that the optimal shift always satisfies
\begin{equation}
    1 \geq \optAugFac \geq \frac{\errId{\rhs^T \sampMat^{-1} \rhs}}{\E[(\rhs^T \sampMat^{-1} \rhs)^2]} \geq 0 \,,
\end{equation}
and furthermore the optimal reduction in relative error satisfies
\begin{equation}
    \max_{\augFac \in \R} \frac{\errId{\sampMat^{-1}\rhs} - \errId{(\sampMat^{-1} - \augFac \augMat)\rhs}}{\errId{\sampMat^{-1}\rhs}} \geq \left(\frac{\errId{\rhs^T \sampMat^{-1} \rhs}}{\E[(\rhs^T \sampMat^{-1} \rhs)^2] } \right) \left(\frac{\errId{\rhs^T \sampMat^{-1} \rhs / \|\rhs\|_2}}{\errId{\sampMat^{-1} \rhs}} \right) \,,
\end{equation}
where $\errId{\hat{\vc{x}}}$ denotes the mean-squared error for an estimator $\hat{\vc{x}}$.
\end{theorem}

\subsection{Discussion}

The theorem above serves a mostly illustrative purpose to provide a simplified version of the proofs we will present in the subsequent section. However, we believe it still warrants some discussion. The result about the optimal reduction in error is what we would likely expect given our earlier comments about shrinkage reducing both variance and bias, indeed, one can use the standard bias-variance decomposition of error to write:
\begin{equation}
    \begin{split}
    &\frac{\errId{\sampMat^{-1}\rhs} - \errId{(\sampMat^{-1} - \optAugFac \augMat)\rhs}}{\errId{\sampMat^{-1}\rhs}} \geq \\
    &\qquad \left(\frac{\var(\rhs^T \sampMat^{-1} \rhs) + \bias(\rhs^T \sampMat^{-1} \rhs)^2}{\E[(\rhs^T \sampMat^{-1} \rhs)^2] } \right) \left(\frac{\errId{\rhs^T \sampMat^{-1} \rhs / \|\rhs\|_2}}{\errId{\sampMat^{-1} \rhs}} \right) \,,
    \end{split}
\end{equation}
we therefore clearly see that the larger the variance or the bias of the $\rhs$ component of the solution $\sampMat^{-1} \rhs$, the larger one expects the reduction in error to be --- clearly in line with our expectations.

Moreover, note that both $\frac{\errId{\rhs^T \sampMat^{-1} \rhs}}{\E[(\rhs^T \sampMat^{-1} \rhs)^2] }$ and $\frac{\errId{\rhs^T \sampMat^{-1} \rhs / \|\rhs\|_2}}{\errId{\sampMat^{-1} \rhs}}$ are dimensionless quantities and have clear interpretations. Both are ratios between $0$ and $1$. The first one is the error in the $\rhs$ component of the solution relative to the second moment, and the second one is the ratio of the total error in the $\rhs$ component of the solution to the solution as a whole. The second term is simply a penalty we pay for only addressing error in the $\rhs$ component --- as the error in all other directions remains unaddressed. This means we will likely to expect the second term to be on the order of $\sim 1/\dimension$. 

Naturally, this means this technique of shifting along only one direction is likely not a very good algorithm to use in practice. The obvious solution to this $\sim 1/\dimension$ factor is to try to shrink $\sampMat^{-1}$ along multiple directions at the same time, and not just along the single direction $\rhs$. Furthermore, there is the other issue that the shifted operator is no longer symmetric with the choice of $\augMat = \frac{1}{\|\rhs\|_2^2} \rhs \rhs^T \sampMat^{-1}$. In practice, symmetry typically corresponds to important physical properties (i.e., reversibility), so there are good reasons why one may want an operator shift that maintains symmetry and doesn't just shrink one component of the solution. 

\section{Operator Shifting in Operator Inner Product Norms} \label{sec:semiBayesian}

To address the issues with shrinking along a single component of the solution in the section above, we will pivot to thinking about the problem at hand as an operator estimation problem. In practice, one may not be simply interested in a single right-hand side $\rhs$, but rather, producing a good inverse operator for a wide variety of potential right-hand sides $\rhs$. As discussed in our introduction, we encode this desire by changing our error metric to have $\rhs$ be sampled from a known distribution $\rhsPrior$ and then measuring the average error under this distribution,
\begin{equation}
    \E_{\rhs \sim \rhsPrior} \E_{\sampMat} [\| \sampMat^{-1} \rhs - \truMat^{-1} \rhs\|_\normMat^2 ] \,.
\end{equation}
When $\rhs$ is made into a random variable, the result actually induces a metric on the space of operators $\Rdimdim$. To see this, let $\autoCor \equiv \E[\rhs \rhs^T]$ denote the second moment matrix of the distribution $\rhsPrior$ and consider the following manipulations,
\begin{equation}
\begin{split}
    \E_{\rhs \sim \rhsPrior} \E_{\sampMat \sim \truMatDist} [\| \sampMat^{-1} \rhs - \truMat^{-1} \rhs\|_\normMat^2 ] &=  \E_{\rhs \sim \rhsPrior} \E_{\sampMat \sim \truMatDist} [\rhs^T (\sampMat^{-1}- \truMat^{-1})^T \normMat (\sampMat^{-1}- \truMat^{-1}) \rhs] \\
    &= \E_{\rhs \sim \rhsPrior} \E_{\sampMat \sim \truMatDist} \tr [\rhs^T (\sampMat^{-1}- \truMat^{-1})^T \normMat (\sampMat^{-1}- \truMat^{-1}) \rhs] \\
    &= \E_{\rhs \sim \rhsPrior} \E_{\sampMat \sim \truMatDist} \tr [(\sampMat^{-1}- \truMat^{-1})^T \normMat(\sampMat^{-1}- \truMat^{-1}) \rhs \rhs^T] \\
    &= \E_{\sampMat \sim \truMatDist} \tr [(\sampMat^{-1}- \truMat^{-1})^T \normMat(\sampMat^{-1}- \truMat^{-1}) \, \E_{\rhs \sim \rhsPrior}(\rhs \rhs^T)] \\
    &= \E_{\sampMat \sim \truMatDist} \tr [(\sampMat^{-1}- \truMat^{-1})^T \normMat(\sampMat^{-1}- \truMat^{-1}) \autoCor] \\
    &= \E_{\sampMat \sim \truMatDist} \tr [\autoCor^{1/2} (\sampMat^{-1}- \truMat^{-1})^T \normMat(\sampMat^{-1}- \truMat^{-1}) \autoCor^{1/2}] \\
\end{split}
\end{equation}
The natural metric and norm on operator space that corresponds to this notion of error is therefore defined by:
\begin{equation}
    \begin{split}
    \langle \mat{X}, \mat{Y} \rangle_{\normMat, \autoCor} &\equiv \tr [\autoCor^{1/2} (\sampMat^{-1}- \truMat^{-1})^T \normMat(\sampMat^{-1}- \truMat^{-1}) \autoCor^{1/2}] \\
    \|\mat{X}\|_{\normMat, \autoCor}^2 &\equiv \langle \mat{X}, \mat{X} \rangle_{\normMat, \autoCor} \,,
    \end{split}
\end{equation}
where $\mat{X}, \mat{Y} \in \Rdimdim$. Note that the often-used Frobenius norm $\|\cdot\|_F$ is a special case of this class of norms that we obtain when $\normMat = \autoCor = \id$.

Therefore, the pivot from thinking about obtaining lower error in a specific $\rhs$ to obtaining lower error on a collection of $\rhs$ essentially changes our problem to an estimation problem for $\sampMat^{-1}$ in the $\|\cdot\|_{\normMat, \autoCor}$ norm. Corresponding to this change in outlook, we will use the notation
\begin{equation}
    \err{\sampMat^{-1}}{\normMat, \autoCor} \equiv \E \|\sampMat^{-1} - \truMat^{-1} \|_{\normMat, \autoCor}^2 = \E_{\rhs \sim \rhsPrior} \E_{\sampMat \sim \truMatDist} [\| \sampMat^{-1} \rhs - \truMat^{-1} \rhs\|_\normMat^2 ] \,,
\end{equation}
to denote the $(\normMat,\autoCor)$-error of the estimator $\sampMat^{-1}$. 

Now, let us again introduce an operator shift to the operator $\sampMat^{-1}$,
\begin{equation}
    \sampMat^{-1} - \augFac \augMat(\sampMat^{-1}) \,.
\end{equation}
A quick dimensional analysis of the above quantity suggests that $\augMat(\sampMat^{-1})$ should be a linear function of $\sampMat^{-1}$. Therefore, it makes sense to study operator shifts of the form
\begin{equation}
    \augMat(\sampMat^{-1}) = \modMat \sampMat^{-1} \modMatTwo \,,
\end{equation}
where $\modMat, \modMatTwo$ are matrices. In service of a similar analysis to the one in the previous section, we note that there is a compatibility constraint on $\modMat$ and $\modMatTwo$ that forces the result to play especially nice with the $\langle \cdot, \cdot \rangle_{\normMat, \autoCor}$ inner product, namely,
\begin{equation} \label{eq:compatConditions}
    \autoCor \modMatTwo^T = \modMat^T \normMat, \qquad (\autoCor \modMatTwo^T) = (\autoCor \modMatTwo^T)^T, \qquad \autoCor \modMatTwo^T \succeq \mat{0} \,.
\end{equation}
We will see soon why this is the case. Some examples when this may be the case are as follows:
\begin{enumerate}
    \item The trivial case where $\autoCor = \normMat = \modMatTwo = \modMat = \id$.
    \item The case where $\autoCor = \normMat$ and $\modMatTwo = \modMat = \id$. 
    \item The case where $\modMat = \autoCor$, $\modMatTwo = \normMat$ and $[\normMat, \autoCor] = 0$.
    \item The case where $\modMat = \normMat^{-1}$ and $\modMatTwo = \autoCor^{-1}$.
\end{enumerate}

With this choice, we can essentially repeat the theorem of the previous section, but now with an eye towards the operator estimation viewpoint,

\begin{theorem} \label{thm:fullOpAug_revision}
    Under the assumptions in \cref{sec:assumptions}, consider operator shifting in the $\|\cdot\|_{\normMat, \autoCor}$-norm. Any operator shift $\augMat = \modMat \sampMat^{-1} \modMatTwo$ such that $\modMat, \modMatTwo \in \Rdimdim$ satisfy the compatibility conditions \cref{eq:compatConditions} has an optimal shift factor that satisfies:
    \begin{equation}
        \sqrt{\frac{\err{\sampMat^{-1}}{ \normMat, \autoCor}}{\E\|\sampMat^{-1}\|^2_{\modMat^T \normMat \modMat, \modMatTwo \autoCor \modMatTwo^T}}} \geq \optAugFac \geq \frac{\err{\sampMat^{-1}}{\modMat^T \normMat, \autoCor \modMatTwo^T}}{\E\|\sampMat^{-1}\|^2_{\modMat^T \normMat \modMat, \modMatTwo \autoCor \modMatTwo^T}} \geq 0 \,.
    \end{equation}
    And the corresponding optimal reduction in error is given by
    \begin{equation}
        \max_{\augFac \in \R} \frac{\err{\sampMat^{-1}}{\normMat, \autoCor} - \err{\sampMat^{-1} - \augFac \augMat}{\normMat, \autoCor}}{\err{\sampMat^{-1}}{\normMat, \autoCor}} \geq \frac{\err{\sampMat^{-1}}{\modMat^T \normMat, \autoCor \modMatTwo^T}^2}{\E\|\sampMat^{-1}\|^2_{\modMat^T \normMat \modMat, \modMatTwo \autoCor \modMatTwo^T} \, \err{\sampMat^{-1}}{\normMat, \autoCor}} \,,
    \end{equation}
    where $\err{\hat{\mat{X}}}{\normMat, \autoCor}$ is the mean squared error of matrix estimator $\hat{\mat{X}}$ in the $\|\cdot\|_{\normMat, \autoCor}$-norm.
\end{theorem}

\begin{proof}
    We would like to repeat the results of the previous section, except now we want to choose the shift factor $\augFac$ that optimizes the $(\normMat,\autoCor)$-error. Just like the previous section, we obtain
    \begin{equation}
        \err{\sampMat^{-1} - \augFac \augMat}{\normMat, \autoCor} = \err{\sampMat^{-1}}{\normMat, \autoCor} - 2 \augFac \, \E \langle \augMat, \sampMat^{-1} - \truMat^{-1} \rangle_{\normMat, \autoCor} + \augFac^2 \, \E \| \augMat \|_{\normMat, \autoCor}^2 \,,
    \end{equation}
    and hence, the optimal shift factor $\optAugFac$ is given by
    \begin{equation} \label{eq:itfollows1}
        \optAugFac = \frac{\E \langle \augMat, \sampMat^{-1} - \truMat^{-1} \rangle_{\normMat, \autoCor}}{\E \| \augMat \|_{\normMat, \autoCor}^2 } \,,
    \end{equation}
    and the corresponding optimal error is
    \begin{equation} \label{eq:itfollows2}
        \err{\sampMat^{-1} - \optAugFac \augMat}{\normMat, \autoCor} = \err{\sampMat^{-1}}{\normMat, \autoCor}  - \frac{(\E \langle \augMat, \sampMat^{-1} - \truMat^{-1} \rangle_{\normMat, \autoCor})^2}{\E \| \augMat \|_{\normMat, \autoCor}^2 } \,,
    \end{equation}

    Let us expand the quantity
    \begin{equation} \label{eq:wanttocompletesquares}
        \begin{split}
        \E \langle \augMat, \sampMat^{-1} - \truMat^{-1} \rangle_{\normMat, \autoCor} &= \E \langle \modMat \sampMat^{-1} \modMatTwo, \sampMat^{-1} - \truMat^{-1} \rangle_{\normMat, \autoCor} \\
        &= \E \langle \sampMat^{-1} , \sampMat^{-1} - \truMat^{-1} \rangle_{\modMatTwo^T \normMat, \autoCor \modMat^T} 
        \end{split}
    \end{equation}

    We want to repeat the argument of the theorem in the previous section. Namely, we would like to have
    \begin{equation}
        \E \langle \truMat^{-1}, \sampMat^{-1} - \truMat^{-1} \rangle_{\modMatTwo^T \normMat, \autoCor \modMat^T} \geq 0 \,,
    \end{equation}
    so that we can complete the square in \cref{eq:wanttocompletesquares}. To prove this fact, we will use $\mat{M} = \mat{M}^T$ to denote $\modMatTwo^T \normMat = \autoCor \modMat^T \succeq \mat{0}$. Now, we simply need to do some manipulations inside the trace,
    \begin{equation}
        \begin{split}
            \E \langle \truMat^{-1}, \sampMat^{-1} - \truMat^{-1} \rangle_{\mat{M}, \mat{M}} &= \E \tr(\mat{M} \truMat^{-1} \mat{M} (\sampMat^{-1} - \truMat^{-1})) \\
            &= \E \tr(\mat{M} \truMat^{-1} \mat{M} \sampMat^{-1}) - \tr(\mat{M} \truMat^{-1} \mat{M} \truMat^{-1}) \\
            &= \E \tr(\truMat^{-1/2} \mat{M} \sampMat^{-1} \mat{M} \truMat^{-1/2}) - \tr( \truMat^{-1/2} \mat{M} \truMat^{-1} \mat{M} \truMat^{-1/2}) \\
            &=  \tr(\truMat^{-1/2} \mat{M} \, \E[\sampMat^{-1}] \mat{M} \truMat^{-1/2}) - \tr( \truMat^{-1/2} \mat{M} \truMat^{-1} \mat{M} \truMat^{-1/2}) \,.
        \end{split}
    \end{equation}
    Since $\E[\sampMat^{-1}] \succeq \truMat^{-1}$ by \cref{lem:loewner}, it follows that:
    \begin{equation}
        \E \langle \truMat^{-1}, \sampMat^{-1} - \truMat^{-1} \rangle_{\mat{M}, \mat{M}} \geq 0 \,.
    \end{equation}
    Using this fact and returning to \cref{eq:wanttocompletesquares}, we obtain
    \begin{equation}
        \begin{split}
            \E \langle \augMat, \sampMat^{-1} - \truMat^{-1} \rangle_{\normMat, \autoCor} &= \E \langle \sampMat^{-1} , \sampMat^{-1} - \truMat^{-1} \rangle_{\modMatTwo^T \normMat, \autoCor \modMat^T} \\
            &\geq \E \langle \sampMat^{-1} , \sampMat^{-1} - \truMat^{-1} \rangle_{\modMatTwo^T \normMat, \autoCor \modMat^T} - \E \langle \truMat^{-1}, \sampMat^{-1} - \truMat^{-1} \rangle_{\modMatTwo^T \normMat, \autoCor \modMat^T} \\
            &= \E \langle \sampMat^{-1} - \truMat^{-1}, \sampMat^{-1} - \truMat^{-1} \rangle_{\modMatTwo^T \normMat, \autoCor \modMat^T} \\
            &= \err{\sampMat^{-1}}{\modMat^T \normMat, \autoCor \modMatTwo^T} \,.
        \end{split}
    \end{equation}
    Similarly, an expansion of the term $\E \| \augMat \|_{\normMat, \autoCor}^2$ gives:
    \begin{equation}
        \begin{split}
            \E \| \augMat \|_{\normMat, \autoCor}^2 &= \E \tr(\autoCor^{1/2} \modMatTwo^T \sampMat^{-1} \modMat^T \normMat \modMat \sampMat^{-1} \modMatTwo \autoCor^{1/2}) \\
            &= \E\|\sampMat^{-1}\|^2_{\modMat^T \normMat \modMat, \modMatTwo \autoCor \modMatTwo^T}
        \end{split}
    \end{equation}
    For a bound in the opposite direction, we simply invoke Cauchy-Schwartz:
    \begin{equation}
        \begin{split}
            \E \langle \augMat, \sampMat^{-1} - \truMat^{-1} \rangle_{\normMat, \autoCor} &\leq \sqrt{\E \|\augMat\|_{\normMat, \autoCor}^2 \, \E \|\sampMat^{-1} - \truMat^{-1}\|_{\normMat, \autoCor}^2} \\
            &= \sqrt{\E\|\sampMat^{-1}\|^2_{\modMat^T \normMat \modMat, \modMatTwo \autoCor \modMatTwo^T} \, \err{\sampMat^{-1}}{\normMat, \autoCor} }
        \end{split}
    \end{equation}
    Therefore, the desired result follows immediately from \cref{eq:itfollows1} and \cref{eq:itfollows2}. 
\end{proof}

A particularly nice corollary of this theorem comes from specializing to the Frobenius norm:

\begin{corollary}
    Under the assumptions in \cref{sec:assumptions}, consider operator shifting in the Frobenius norm $\|\cdot\|_F$. The operator shift $\augMat =\sampMat^{-1}$ has an optimal shift factor that satisfies:
    \begin{equation}
        1 \geq \sqrt{\frac{\err{\sampMat^{-1}}{F}}{\E\|\sampMat^{-1}\|^2_{F}}}  \geq \optAugFac \geq \frac{\err{\sampMat^{-1}}{F}}{\E\|\sampMat^{-1}\|^2_{F}} \geq 0 \,.
    \end{equation}
    And the corresponding optimal reduction in relative error is given by
    \begin{equation}
        \max_{\augFac \in \R} \frac{\err{\sampMat^{-1}}{F} - \err{\sampMat^{-1} - \augFac \augMat}{F}}{\err{\sampMat^{-1}}{F}} \geq \frac{\err{\sampMat^{-1}}{F}}{\E\|\sampMat^{-1}\|^2_{F}} \,,
    \end{equation}
    where $\err{\hat{\mat{X}}}{F}$ is the mean squared error of matrix estimator $\hat{\mat{X}}$ in the Frobenius norm.
\end{corollary}

This theorem tells us that if we approximate a good shift factor that comes close to $\optAugFac$ we should expect a reduction in error that is proportional to the Frobenius error relative to the average squared Frobenius norm. Moreover, it tells us roughly how large we should expect the optimal shift factor to be. If one already has a good estimate of the ratio $\err{\sampMat^{-1}}{F} / \E\|\sampMat^{-1}\|^2_{F}$, one could use this as an approximate shift factor.

Alternatively, another method to approximate $\optAugFac$ is to try to bootstrap it using synthetic samples of $\sampMat^{-1}$. Naturally, one cannot draw additional samples from the distribution $\truMatDist$; however, it is usually the case that by observing $\sampMat$, we have some ideas of the parameters that generate the distribution $\truMatDist$ and hence can draw synthetic samples from an approximate distribution $\truMatDist'$ that can be used to build a Monte Carlo estimate for $\optAugFac$. However, we will table this discussion until later in the paper when we talk about algorithmic implementations of operator shifting. For now, let us focus primarily on theoretical results.

\section{Operator Shifting in the Energy Norm} \label{sec:osenergynorm}

The previous section represents a class of operator shifts that one might use when the norm $\normMat$ is actually known; however, for many elliptic problems, the norm defined by the true matrix $\truMat$ itself is an important error norm. For example, in many physical problems, $\vc{x}^T \truMat \vc{x}$ measures the energy of a state $\vc{x}$ and hence can be even more important as a metric than $L^2$. Moreover, the case of $\truMat$ is special because the optimal shift factor reads 
\begin{equation}
    \begin{split}
    \optAugFac &= \frac{\E \langle \augMat, \sampMat^{-1} - \truMat^{-1} \rangle_{\truMat, \autoCor}}{\E \| \augMat \|_{\truMat, \autoCor}^2 } \,,
    \end{split}
\end{equation}
and hence the $\truMat^{-1}$ in the numerator will cancel with the $\truMat$ in the $\langle \cdot, \cdot \rangle_{\truMat, \autoCor}$-inner product.

This means that the possible operator shifts we can make and the conditions they must satisfy are slightly different. Indeed, for the energy norm, we consider only shifts of the form
\begin{equation}
    \augMat = \sampMat^{-1} \modMat \,,
\end{equation}
where $\modMat$ satisfies the compatibility conditions:
\begin{equation} \label{eq:compatConditions2}
    ( \autoCor \modMat^T ) = ( \autoCor \modMat^T)^T, \qquad \autoCor \modMat^T \succeq \mat{0} \,.
\end{equation}
This type of shift gives the following theorem:

\begin{theorem} \label{thm:fullOpAugEnergy}
    Under the assumptions in \cref{sec:assumptions}, consider operator shifting in energy norm $\|\cdot\|_{\truMat, \autoCor}$. Any operator shift $\augMat = \sampMat^{-1} \modMat$ such that $\modMat$ satisfies the compatibility conditions \cref{eq:compatConditions2} has an optimal shift factor that satisfies:
    \begin{equation}
        1 \geq \sqrt{\frac{\err{\sampMat^{-1}}{\truMat, \autoCor}}{\E\|\sampMat^{-1}\|^2_{\truMat, \modMat^T \autoCor \modMat}}}  \geq \optAugFac \geq \frac{\err{\sampMat^{-1}}{\truMat, \autoCor \modMat^T}}{\E\|\sampMat^{-1}\|^2_{\truMat, \modMat^T \autoCor \modMat}} \geq 0 \,.
    \end{equation}
    And the corresponding optimal reduction in relative error is given by
    \begin{equation}
        \max_{\augFac \in \R} \frac{\err{\sampMat^{-1}}{\truMat, \autoCor} - \err{\sampMat^{-1} - \augFac \augMat}{\truMat, \autoCor}}{\err{\sampMat^{-1}}{\truMat, \autoCor}} \geq \frac{\err{\sampMat^{-1}}{\truMat, \autoCor \modMat^T}^2}{\E\|\sampMat^{-1}\|^2_{\truMat, \modMat^T \autoCor \modMat} \, \err{\sampMat^{-1}}{\truMat, \autoCor}}
    \end{equation}
    where $\err{\sampMat^{-1}}{\truMat, \autoCor}$ is the mean squared error of matrix estimator $\hat{\mat{X}}$ in the $\|\cdot\|_{\truMat, \autoCor}$-norm.
\end{theorem}

\begin{proof}
This proof is more or less a carbon copy of the proof of \cref{thm:fullOpAug_revision}. The only difference is when lower bounding
\begin{equation}
    \begin{split}
    \E \langle \augMat, \sampMat^{-1} - \truMat^{-1} \rangle_{\truMat, \autoCor} &= \E \langle \sampMat^{-1} \modMat, \sampMat^{-1} - \truMat^{-1} \rangle_{\truMat, \autoCor} \\
        &= \E \langle \sampMat^{-1} , \sampMat^{-1} - \truMat^{-1} \rangle_{\truMat, \autoCor \modMat^T} 
    \end{split}
\end{equation}
The crucial inequality we need to complete the square like in the previous proof is
\begin{equation}
    \E \langle \truMat^{-1}, \sampMat^{-1} - \truMat^{-1} \rangle_{\truMat, \autoCor \modMat^T} \geq 0 \,.
\end{equation}
expanding the quantity on the left hand side
\begin{equation}
    \begin{split}
        &\E \langle \truMat^{-1}, \sampMat^{-1} - \truMat^{-1} \rangle_{\truMat, \autoCor \modMat^T} \\
        &\qquad = \E \tr((\autoCor \modMat^T)^{1/2} \sampMat^{-1} (\autoCor \modMat^T)^{1/2}) - \tr((\autoCor \modMat^T)^{1/2} \truMat^{-1} (\autoCor \modMat^T)^{1/2}) \\
        &\qquad = \tr((\autoCor \modMat^T)^{1/2} \, \E [\sampMat^{-1}] (\autoCor \modMat^T)^{1/2}) - \tr((\autoCor \modMat^T)^{1/2} \truMat^{-1} (\autoCor \modMat^T)^{1/2}) \geq 0 \,.
    \end{split}
\end{equation}
Thus, the result follows as in \cref{thm:fullOpAug_revision}.
\end{proof}

Specializing the above theorem to the case where $\modMat = \id$, we obtain a particularly nice corollary,

\begin{corollary}
    Under the assumptions in \cref{sec:assumptions}, consider operator shifting in energy norm $\|\cdot\|_{\truMat, \autoCor}$. The operator shift $\augMat = \sampMat^{-1}$ has an optimal shift factor that satisfies:
    \begin{equation}
        1 \geq \sqrt{\frac{\err{\sampMat^{-1}}{\truMat, \autoCor}}{\E\|\sampMat^{-1}\|^2_{\truMat, \autoCor}}}  \geq \optAugFac \geq \frac{\err{\sampMat^{-1}}{\truMat, \autoCor}}{\E\|\sampMat^{-1}\|^2_{\truMat, \autoCor}} \geq 0 \,.
    \end{equation}
    And the corresponding optimal reduction in relative error is given by
    \begin{equation}
        \max_{\augFac \in \R} \frac{\err{\sampMat^{-1}}{\truMat, \autoCor} - \err{\sampMat^{-1} - \augFac \augMat}{\truMat, \autoCor}}{\err{\sampMat^{-1}}{\truMat, \autoCor}} \geq \frac{\err{\sampMat^{-1}}{\truMat, \autoCor}}{\E\|\sampMat^{-1}\|^2_{\truMat, \autoCor}}
    \end{equation}
    where $\err{\sampMat^{-1}}{\truMat, \autoCor}$ is the mean squared error of matrix estimator $\hat{\mat{X}}$ in the $\|\cdot\|_{\truMat, \autoCor}$-norm.
\end{corollary}

\section{Bootstrap Formalism}

To be able to approximate the optimal shift factor $\optAugFac$ using Bootstrap Monte Carlo and write down a final algorithm for the operator shifting ideas presented above, we must first establish a formalism that allows one to generate synthetic samples of $\sampMat^{-1}$.

To build the formalism, we assume that there exists an underlying parameter space $\paramSpace$ (with sigma algebra $\Sigma$), where the parameters $\params \in \paramSpace$ contain a description of the system that produces the matrices above (e.g., $\params$ may be measurements of a scattering background, edge weights, vertex positions, etc.). We suppose the relationship between parameters and matrices is given by a measurable map 
\begin{equation}
    \paramToMat : \paramSpace \longrightarrow \pdcone \,.
\end{equation}
For example, $\params \in \paramSpace$ may be a weighted graph, and $\paramToMat(\params) \in \pdcone$ may denote a minor of its Laplacian. We suppose that there exist some \emph{unobserved} true system parameters $\truParam \in \paramSpace$ that produce the true matrix $\truMat = \paramToMat(\truParam)$. We also suppose that there exists a known family of distributions $\paramDist{\params}$ over $\paramSpace$ indexed by $\params \in \paramSpace$ that describes the observed randomness in the system if $\params$ were to be the true system parameters. It is this the relationship between $\truParam$ and the distribution $\paramDist{\truParam}$ that we assume is known as part of the model (but not the true system parameters $\truParam$ themselves). Once this family has been specified, the distribution of $\sampMat$ is given by $\paramToMat_\# \paramDist{\truParam}$, where $\paramToMat_\#$ denotes the pushforward. We define $\matDist \equiv \paramToMat_\# \paramDist{\truParam}$. Note that $\truMatDist$ as used before and after this section refers to the distribution $\matDist$.

To frame the full problem, we assume that we are given a single sample $\sampParam$ from $\paramDist{\truParam}$ with corresponding matrix $\sampMat = \paramToMat(\sampParam)$ and we would like to use operator shifting to obtain a more accurate estimate of the inverse operator $\truMat = \paramToMat(\truParam)$. This, of course, necessitates estimating the optimal shift factor,
\begin{equation} \label{eq:optaugfacexpr2}
    \optAugFac = \frac{\E_{\sampMat \sim \truMatDist} \langle \augMat, \sampMat^{-1} - \truMat^{-1} \rangle_{\normMat, \autoCor}}{\E_{\sampMat \sim \truMatDist} \| \augMat \|_{\normMat, \autoCor}^2 } \,.
\end{equation}
Naturally, it is not possible for us to estimate this quantity directly with Monte Carlo, as we do not know the true parameters $\truParam$ and hence cannot draw synthetic samples from $\truMatDist$.

However, while $\truMatDist = \matDist$ is unknown, we assume that the family of distributions $\paramDist{\params}$ itself is known --- that is, given a $\params$, we can sample synthetic data from the distribution $\paramDist{\params}$. This means that to approximate the optimal shift factor, we can try to approximate $\optAugFac$ by drawing approximate Monte Carlo samples from the approximate distribution $\paramDist{\sampParam}$. We will give all the details of this algorithm in the next section.

\section{Estimating the Optimal Shift Factor} \label{sec:genaugalg}

To convert the above into a general algorithm, we need to first do two things. The first is to convert $\optAugFac$ into a form that is more amenable to Monte Carlo evaluation. Obviously, computing the trace of a $\dim \times \dim$ matrix is too expensive in most settings, therefore, we evaluate traces by using the probabilistic form of the trace, i.e., if $\autoCor \in \pdcone$, then
\begin{equation} \label{eq:trprob}
    \tr(\autoCor^{1/2} \mat{X} \autoCor^{1/2}) = \E_{\sampleTrVec}[\sampleTrVec^T \mat{X} \sampleTrVec],
\end{equation}
where $\sampleTrVec$ is sampled from any distribution with second moment matrix $\autoCor$. We will use the notation that $\langle\cdot, \cdot \rangle_{\normMat}$ for $\normMat \in \pdcone$ denotes the $\normMat$ vector norm,
\begin{equation}
    \langle \vc{x}, \vc{y} \rangle_{\normMat} \equiv \vc{x}^T \normMat \vc{y} \,.
\end{equation}
With \cref{eq:trprob}, we can evaluate matrix inner products in the $\langle \cdot, \cdot \rangle_{\normMat, \autoCor}$ by using expectations of the corresponding $\langle\cdot, \cdot\rangle_{\normMat}$ vector norm,
\begin{equation}
    \langle \mat{X}, \mat{Y} \rangle_{\normMat, \autoCor} = \E_{\sampleTrVec \sim \mathcal{N}(\vc{0}, \autoCor)} \langle \mat{X} \sampleTrVec, \mat{Y} \sampleTrVec \rangle_{\normMat} \,.
\end{equation}

With this, we can rewrite the expression \cref{eq:optaugfacexpr2} as
\begin{equation} 
    \optAugFac = \frac{\E_{\sampMat \sim \truMatDist, \sampleTrVec \sim \mathcal{N}(\vc{0}, \autoCor)} \langle \augMat \sampleTrVec, (\sampMat^{-1} - \truMat^{-1}) \sampleTrVec \rangle_{\normMat}}{\E_{\sampMat \sim \truMatDist, \sampleTrVec \sim \mathcal{N}(\vc{0}, \autoCor)} \| \augMat \sampleTrVec \|_{\normMat}^2 } \,,
\end{equation}
where the normal distribution $\mathcal{N}(\vc{0}, \autoCor)$ can always be substituted for any other distribution with the same second moment. We note that the above quantity is impossible to compute outright because we do not know the ground truth $\truMat$ or the distribution $\matDist$. To work around this limitation, we approximate $\optAugFac$ by bootstrapping the above quantity with observed data $\sampMat$, and replacing $\truMat$ with an observed $\sampMat$ and the distribution $\matDist$ with $\bootstrapDist$. This nets us the approximation
\begin{equation}
    \bootstrapAugFac(\sampMat) = \frac{\E_{\bootstrapSampMat \sim \bootstrapDist, \sampleTrVec \sim \mathcal{N}(\vc{0}, \autoCor)} \langle \augMat(\bootstrapSampMat) \sampleTrVec, (\bootstrapSampMat^{-1} - \sampMat^{-1}) \sampleTrVec \rangle_{\normMat}}{\E_{\sampMat \sim \bootstrapDist, \sampleTrVec \sim \mathcal{N}(\vc{0}, \autoCor)} \| \augMat(\bootstrapSampMat) \sampleTrVec \|_{\normMat}^2 } \,,
\end{equation}
where $\bootstrapSampMat$ denotes a bootstrapped sample from the distribution $\bootstrapDist$. Since bootstrapping tends to work well when estimating scalar quantities, we believe that this approximation step is justified. Now, the above can be estimated with Monte Carlo,
\begin{equation} \label{eq:montecarloaugfac}
    \bootstrapMonteCarloAugFac(\sampMat) = \frac{\sum_{i = 0}^M \langle \augMat(\sampBootstrapSampMat{i}) \sampSampleTrVec{i}, (\sampBootstrapSampMat{i}^{-1} - \sampMat^{-1}) \sampSampleTrVec{i} \rangle_{\normMat}}{\sum_{i = 0}^M \| \augMat(\sampBootstrapSampMat{i}) \sampSampleTrVec{i} \|_{\normMat}^2 } \,,
\end{equation}
where
\begin{equation}
\begin{aligned}[c]
\sampBootstrapSampMat{1}, ..., \sampBootstrapSampMat{M} &\sim \bootstrapDist\\
\sampSampleTrVec{1}, ..., \sampSampleTrVec{M} &\sim \mathcal{N}(0, \autoCor)
\end{aligned}
\qquad
\begin{aligned}[c]
&\text{ i.i.d.}, \\
&\text{ i.i.d.}
\end{aligned}
\end{equation}
This gives us our general purpose operator shifting algorithm, which we give in full detail in \cref{alg:genSemiBayesian}.

\begin{algorithm}[t] 
\caption{Operator Shifting (\textbf{GS})} \label{alg:two_sided_operator_augmentation}
\hspace*{\algorithmicindent} \textbf{Input}: A right hand side $\rhs$, an operator sample $\sampMat \sim \matDist$ with corresponding parameters $\sampParam \in \paramSpace$, a choice of second moment matrix $\autoCor$, a choice of norm $\normMat$, sample count $M$. \\
\hspace*{\algorithmicindent} \textbf{Output}: An estimate $\impSol$ of $\truMat^{-1} \rhs$. \\
\begin{algorithmic}[1] \label{alg:genSemiBayesian}
\State Draw $M$ i.i.d. bootstrap samples $\sampBootstrapSampMat{1}, ..., \sampBootstrapSampMat{M} \sim \bootstrapDist$.
\State Draw $M$ i.i.d. bootstrap samples $\sampSampleTrVec{1}, ..., \sampSampleTrVec{M} \sim \mathcal{N}(\vc{0}, \autoCor)$.
\State Assign
\begin{equation*}
    \bootstrapMonteCarloAugFac(\sampMat) = \frac{\sum_{i = 1}^M \langle \augMat(\sampBootstrapSampMat{i}) \sampSampleTrVec{i}, (\sampBootstrapSampMat{i}^{-1} - \sampMat^{-1}) \sampSampleTrVec{i} \rangle_{\normMat}}{\sum_{i = 1}^M \| \augMat(\sampBootstrapSampMat{i}) \sampSampleTrVec{i} \|_{\normMat}^2 } \,,
\end{equation*}
\State Assign $\impSol \gets (\sampMat^{-1} - \bootstrapMonteCarloAugFac \autoCor \sampMat^{-1} \normMat) \rhs$
\State Return $\impSol$.
\end{algorithmic}
\end{algorithm}

\section{Efficient Estimation using Truncated Expansions} \label{sec:energyNorm}

The reader will note that an implementation of operator shifting will involve applying a different $M$ Monte Carlo samples in \cref{eq:montecarloaugfac}. Naturally, this can be quite expensive for very large operators. Hence, in this section we turn to the problem of making Monte Carlo samples more efficient. Fortunately, the energy norm has a number of properties that make it particularly attractive when it comes to efficient computations. In particular, under certain assumptions on the distribution of the randomness in $\sampMat$, we will prove that $\augFac$ can be approximated effectively by using a modified $2k$-th order Taylor expansion for $\sampMat^{-1}$. This means that one can perform Monte-Carlo  computation of $\augFac$ effectively without needing to invert a full linear system for each sample.

We will operate in the framework of \cref{sec:osenergynorm}, but specialize our discussion to the operator shift given by
\begin{equation}
    \augMat = \sampMat^{-1} \,,
\end{equation}
Repeating the computation done in the previous two sections, we have that the optimal shift factor is given by
\begin{equation}
    \begin{split}
    \optAugFac &= \frac{\E \langle \sampMat^{-1}, \sampMat^{-1} - \truMat^{-1} \rangle_{\truMat, \autoCor}}{\E \| \sampMat^{-1} \|_{\truMat, \autoCor}^2 } \,,
    \end{split}
\end{equation}
For brevity of notation, we introduce a shorthand for the expected $\autoCor$-modulated trace,
\begin{equation}
    \langle \hat{\mat{X}} \rangle_{\autoCor} = \E \tr (\autoCor^{1/2} \hat{\mat{X}} \autoCor^{1/2}) \,.
\end{equation}
With this notation, we have:
\begin{equation} \label{eq:modulatedtracebetastar}
    \optAugFac = \frac{\langle \sampMat^{-1} \truMat \sampMat^{-1} \rangle_{\autoCor} - \langle \sampMat^{-1} \rangle_{\autoCor}}{\langle \sampMat^{-1} \truMat \sampMat^{-1} \rangle_{\autoCor} } \,,
\end{equation}
The properties that makes this setting amenable for computation are related to the Taylor series of the numerator and denominator of the above expression. To demonstrate, we can expand the numerator and denominator term using the Taylor expansion of $\sampMat^{-1}$ about base-point $\truMat^{-1}$,
\begin{equation}
\begin{split}
        \sampMat^{-1} &\sim \truMat^{-1} - \truMat^{-1} \errMat \truMat^{-1} + \truMat^{-1} \errMat \truMat^{-1} \errMat \truMat^{-1} -  \truMat^{-1} \errMat \truMat^{-1} \errMat \truMat^{-1} \errMat \truMat^{-1} + ... \\
    &= \truMat^{-1/2} \left[ \sum_{k = 0}^\infty (-\truMat^{-1/2} \errMat \truMat^{-1/2})^k \right] \truMat^{-1/2}  \,.
\end{split}
\end{equation}
However, note that for this infinite Taylor series to converge, one must restrict the domain of $\sampMat$. Just like in the single variable case, the Taylor series only converges absolutely on the event $\{ \sampMat \prec 2 \truMat \}$. We prove this in a lemma,
\begin{restatable}{lemma}{convergencelemma} \label{lem:regionofconvergence}
Let $\hat{\mat{X}} \in \pdcone$ be a random matrix such that $\E [\hat{\mat{X}}^{-2}]$ exists and $\hat{\mat{X}} \preceq (2 - \varepsilon) \mat{Y}$ almost surely for $\mat{Y} \in \pdcone$ and $\varepsilon > 0$. Consider the infinite Taylor series for $\hat{\mat{X}}^{-1}$ and $\hat{\mat{X}}^{-2}$ respectively about base-point $\mat{Y}$, i.e.,
\begin{equation}
\begin{split}
    \hat{\mat{X}}^{-1} &\sim \mat{Y}^{-1/2} \left[ \sum_{k = 0}^\infty (-\mat{Y}^{-1/2} (\hat{\mat{X}} - \mat{Y}) \mat{Y}^{-1/2})^k \right] \mat{Y}^{-1/2} \,, \\
    \hat{\mat{X}}^{-2} &\sim \mat{Y}^{-1/2} \left[ \sum_{k = 0}^\infty (k + 1) (-\mat{Y}^{-1/2} (\hat{\mat{X}} - \mat{Y}) \mat{Y}^{-1/2})^k \right] \mat{Y}^{-1/2} \,.
\end{split}
\end{equation}
Both series converge in mean-squared Frobenius norm to their respective limits.
\end{restatable}
A proof of this fact is relegated to the appendix. This places a damper on our ability to use the Taylor expansion of $\sampMat^{-1}$ with impunity over all of $\pdcone$. For simplicity, however, we will assume for now that the true distribution $\truMatDist$ is supported on the event $\{ \sampMat \prec (2 - \varepsilon) \truMat \}$. It turns out, as we will show in \cref{sec:variableBasepoint}, that one can remove this assumption by instead expanding about a variable base-point $\basepointfactorFunc \truMat$ for some large enough factor $\basepointfactorFunc \in \R$.

Therefore, when we have $\supp(\truMatDist) \subset \{ \sampMat \prec (2 - \varepsilon)\truMat \}$, we can expand $\langle \sampMat^{-1} \rangle_{\autoCor}$,
\begin{equation} \label{eq:subfactor}
    \begin{split}
        \langle \sampMat^{-1} \rangle_{\autoCor} &= \left \langle \truMat^{-1/2}  \left( \sum_{k = 0}^\infty (-\truMat^{-1/2} \errMat \truMat^{-1/2})^k \right) \truMat^{-1/2} \right \rangle_{\autoCor} \\
        &= \sum_{k = 0}^\infty \left \langle \truMat^{-1/2} (-\truMat^{-1/2} \errMat \truMat^{-1/2})^k \truMat^{-1/2} \right \rangle_{\autoCor} \\
        &= \sum_{k = 0}^\infty \left \langle (-\truMat^{-1/2} \errMat \truMat^{-1/2})^k \right \rangle_{\truMat^{-1/2} \autoCor \truMat^{-1/2}} \\
        &= \sum_{k = 0}^\infty \langle \xMat^k \rangle_{\sMat} \,,
    \end{split} 
\end{equation}
where we have defined 
\begin{equation}
    \xMat \equiv - \truMat^{-1/2} \errMat \truMat^{-1/2}\,, \qquad \sMat \equiv \truMat^{-1/2} \autoCor \truMat^{-1/2} \,.
\end{equation}
Note quickly that the assumption that $\E[\sampMat] \preceq \truMat$ implies $\E[\xMat] \succeq 0$. The assumption that $\mat{0} \prec \sampMat \prec (2 - \varepsilon) \truMat$ gives us that
\begin{equation}
    -(1 - \varepsilon) \id \prec \xMat \prec \id \,.
\end{equation}
From line one to two in \cref{eq:subfactor}, we may interchange the $\langle \cdot \rangle_{\autoCor}$ operator and the infinite sum by virtue of the fact that $\langle \cdot \rangle_{\autoCor}$ is continuous with respect to the mean squared Frobenius norm,
\begin{equation}
    \langle \hat{\mat{X}} \rangle_{\autoCor} = \E \tr(\autoCor^{1/2} \hat{\mat{X}} \autoCor^{1/2}) = \E \tr(\autoCor \hat{\mat{X}}) \leq \| \autoCor\|_F \, \sqrt{ \E \|\hat{\mat{X}} \|_F^2 } \,,
\end{equation}
where the inequality above is by Cauchy-Schwartz.

We can similarly expand $\langle \sampMat^{-1} \truMat \sampMat^{-1} \rangle_{\autoCor}$,
\begin{equation}
\begin{split}
    &\langle \sampMat^{-1} \truMat \sampMat^{-1} \rangle_{\autoCor} \\
    &= \left \langle \truMat^{-1/2}  \left( \sum_{k = 0}^\infty (-\truMat^{-1/2} \errMat \truMat^{-1/2})^k\ \right) \left( \sum_{k = 0}^\infty (-\truMat^{-1/2} \errMat \truMat^{-1/2})^k\ \right) \truMat^{-1/2} \right \rangle_{\autoCor} \\
    &= \left \langle \truMat^{-1/2} \left(  \sum_{k = 0}^\infty (k + 1) (- \truMat^{-1/2} \errMat \truMat^{-1/2})^k \right) \truMat^{-1/2} \right \rangle_{\autoCor} \\
    &= \sum_{k = 0}^\infty (k + 1) \left \langle \truMat^{-1/2} (- \truMat^{-1/2} \errMat \truMat^{-1/2})^k \truMat^{-1/2} \right \rangle_{\autoCor} \,, \\
    &= \sum_{k = 0}^\infty (k + 1) \langle \xMat^{k} \rangle_{\sMat} 
\end{split}
\end{equation}
where on line two to three we have used the property that $\left(\sum_k z^k\right) \left(\sum_k z^k \right) \sim \sum_k (k + 1) z^k$. \cref{lem:regionofconvergence} tells us the above series converges in the mean Frobenius norm and the fact that $\langle \cdot \rangle_{\sMat}$ is continuous with respect to the expected squared Frobenius norm lets us interchange summation and the $\langle \cdot \rangle_{\sMat}$ operator.

Thus, plugging everything into \cref{eq:modulatedtracebetastar}, we obtain that
\begin{equation} \label{eq:normA_optBeta}
    \optAugFac = \frac{\sum_{k = 0}^\infty k \, \langle \xMat^k \rangle_{\sMat}}{\sum_{k = 0}^\infty (k + 1) \, \langle \xMat^k \rangle_{\sMat}}
\end{equation}

The form \cref{eq:normA_optBeta} suggests a possible way of avoiding the need to invert a linear system for every Monte Carlo sample involved in approximating $\optAugFac$. Instead of attempting to approximate the quantity $\optAugFac$ directly, one can truncate the series in \cref{eq:normA_optBeta} with an appropriate windowing function to obtain a series of truncated shift factors, defined as
\begin{equation} \label{eq:truncFacDef}
    \truncFac{N} \equiv \frac{\sum_{k = 0}^\infty \windowFunc{N}{k} \, \langle \xMat^k \rangle_{\sMat}}{\sum_{k = 0}^\infty \windowFuncDenom{N}{k}  \, \langle \xMat^k \rangle_{\sMat}} \,,
\end{equation}
where $\windowFunc{N}{k} : \mathbb{Z}_{\geq 0} \longrightarrow \R$ and $\windowFuncDenom{N}{k} : \mathbb{Z}_{\geq 0} \longrightarrow \R$ are two appropriately defined collections of discrete windowing functions, each with \emph{bounded support}, such that the collection has the property that $\windowFunc{N}{k} \rightarrow k$ and $\windowFuncDenom{N}{k} \rightarrow k + 1$ as $N \rightarrow \infty$.
It turns out, as we will discuss in the next section, that regardless of the randomness structure of the distribution $\truMatDist$ (as long as it is bounded), one can choose an appropriate series of windowing functions $\windowFunc{N}{k}, \windowFuncDenom{N}{k}$ such that
\begin{equation}
    0 \leq \truncFac{1} \leq \truncFac{2} \leq ... \leq \truncFac{N} \leq ... \leq \optAugFac \leq 1 \,,
\end{equation}
which means that using any of the truncated shift factors $\truncFac{N}$ underestimates the value of $\optAugFac$ and hence still decreases the value of the objective $\err{(1 - \augFac) \sampMat^{-1}}{\truMat, \autoCor}$ from its base value of $\err{\sampMat^{-1}}{\truMat, \autoCor}$, i.e.,
\begin{equation}
    \err{\sampMat^{-1}}{\truMat, \autoCor} \geq \err{(1 - \truncFac{1}) \sampMat^{-1}}{\truMat, \autoCor} \geq ... \geq \err{(1 - \truncFac{N}) \sampMat^{-1}}{\truMat, \autoCor} \geq ... \geq \err{(1 - \optAugFac) \sampMat^{-1}}{\truMat, \autoCor} \,.
\end{equation}

Before we continue, note that one can rewrite the truncated shift factors $\truncFac{N}$ in a form more amenable for computation, namely
\begin{equation}
    \truncFac{N} =  \frac{\E\left[\sum_{k = 0}^\infty \windowFunc{N}{k} \, \sampleTrVec^T \truMat^{-1} (\errMat \truMat^{-1})^k \sampleTrVec \right]}{\E\left[\sum_{k = 0}^\infty \windowFuncDenom{N}{k} \, \sampleTrVec^T \truMat^{-1} (\errMat \truMat^{-1})^k \sampleTrVec \right]} \,,
\end{equation}
where $\sampleTrVec$ is sampled from a distribution with second moment matrix $\autoCor$ (perhaps $\mathcal{N}(\mat{0}, \autoCor)$), and is independent from $\sampMat \sim \truMatDist$. 

\subsection{Monotonic Estimates of the Shift Factor} \label{sec:taylorApprox}
Our analyses of the monotonicity of the $\truncFac{N}$ relies upon the following lemma,
\begin{restatable}{lemma}{monotonesequencelemma} \label{lem:monotonic}
Let $a_1, a_2, ..., a_k, ... \in \R_{\geq 0}$ and $b_1, b_2 ..., b_k, ... \in \R_{\geq 0}$ be two sequences of nonnegative real numbers with $b_1 > 0$, and consider the truncated sum ratios
\begin{equation} \label{eq:truncFacRatio}
    \truncFac{N} \equiv \frac{\sum_{k = 1}^N a_k }{\sum_{k = 1}^{N} b_k} \,,
\end{equation}
then, if it is the case that
\begin{equation} \label{eq:monotonicRequirement}
\frac{a_k}{b_k} \geq \frac{a_{k - 1}}{b_{k - 1}} \,,    
\end{equation} 
for all $k$ (e.g., the ratios $a_k / b_k$ are monotonically increasing), then the sequence $\beta_1, \beta_2, ..., \beta_k, ...$ is monotonically increasing.
\end{restatable}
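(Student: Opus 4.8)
The plan is to avoid any machinery and simply establish the single-step inequality $\truncFac{N-1} \le \truncFac{N}$ for every $N \ge 2$, which immediately yields monotonicity of the entire sequence $\beta_1 \le \beta_2 \le \cdots$. The engine is the elementary \emph{mediant} observation: appending a new ratio $a_N/b_N$ to a running weighted average $\truncFac{N-1}$ drags the average toward $a_N/b_N$, so the average goes up exactly when $a_N/b_N$ dominates the current average — and domination of the current average is, after clearing denominators, precisely a telescoped form of hypothesis \cref{eq:monotonicRequirement}.

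Concretely, write $A_{N-1} = \sum_{k=1}^{N-1} a_k$ and $B_{N-1} = \sum_{k=1}^{N-1} b_k$. The first thing to check is that all the quantities are well-defined: since $b_1 > 0$ and every $b_k \ge 0$, each partial sum $\sum_{k=1}^{m} b_k$ with $m \ge 1$ is strictly positive, so $\truncFac{N}$ makes sense for all $N$ and every denominator that appears below is positive. Then the target inequality $\truncFac{N-1} \le \truncFac{N}$, i.e.\ $\dfrac{A_{N-1}}{B_{N-1}} \le \dfrac{A_{N-1} + a_N}{B_{N-1} + b_N}$, is — after multiplying through by the two positive denominators and cancelling the common term $A_{N-1} B_{N-1}$ — equivalent to $a_N B_{N-1} \ge b_N A_{N-1}$, that is, $\sum_{k=1}^{N-1}\bigl(a_N b_k - b_N a_k\bigr) \ge 0$.

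Finally I would invoke \cref{eq:monotonicRequirement}: each summand $a_N b_k - b_N a_k$ is nonnegative for $1 \le k < N$, so the sum is nonnegative, which closes the single-step bound and hence the chain $\truncFac{1} \le \truncFac{2} \le \cdots \le \truncFac{N} \le \cdots$. I would also remark in one line that the parenthetical in the statement (that \cref{eq:monotonicRequirement} holds whenever $a_k/b_k$ is nondecreasing) is just the subcase where all $b_k > 0$, since then $a_N b_k \ge b_N a_k \iff a_k/b_k \le a_N/b_N$; but the cross-multiplied form is what the proof actually uses, so we need no positivity of the individual $a_k$ or $b_k$ beyond $b_1 > 0$.

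There is really no hard step here — the only subtlety is the well-definedness and positivity of the partial denominators, which is exactly what the hypothesis $b_1 > 0$ secures. One could dress the single-step inequality up as a formal induction on $N$, but that would add nothing of substance.
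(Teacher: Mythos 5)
Your proposal is correct and is essentially the same argument as the paper's: both clear the (positive) denominators, cancel the common product $A_{N-1}B_{N-1}$, and reduce the single-step inequality $\truncFac{N-1}\le\truncFac{N}$ to $\sum_{k=1}^{N-1}(a_N b_k - b_N a_k)\ge 0$, which holds term by term by the hypothesis. Your added remarks on well-definedness of the denominators and the mediant interpretation are fine but do not change the substance.
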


To construct the discrete windowing functions $\windowFunc{N}{k}, \windowFuncDenom{N}{k}$, it is instructive to think of the generating polynomials corresponding to $\windowFunc{N}{k}, \windowFuncDenom{N}{k}$, i.e.,
\begin{equation}
    \genWindowPoly{N}(x) \equiv \sum_{k = 0}^\infty \windowFunc{N}{k} \, x^k, \qquad \genWindowPolyDenom{N}(x) \equiv \sum_{k = 0}^\infty \windowFuncDenom{N}{k} \, x^k \,.
\end{equation}
We can rewrite \cref{eq:truncFacDef} as
\begin{equation} \label{eq:truncFacSimple}
    \truncFac{N} = \frac{\langle \genWindowPoly{N}(\xMat) \rangle_{\sMat}}{ \langle\genWindowPolyDenom{N}(\xMat) \rangle_{\sMat}} \,. 
\end{equation}
Note that we have used the fact that $\genWindowPoly{N}(x), \genWindowPolyDenom{N}(x)$ are polynomial generating functions of bounded degree to interchange summation and expectation.

Our intent now is to find a sequence of polynomials $\genWindowPoly{N}(x), \genWindowPolyDenom{N}(x)$ with the properties
\begin{equation} \label{eq:polyconvergence}
    \genWindowPoly{N}(x) \nearrow \sum_{k = 0}^\infty k \, x^k \,, \qquad \genWindowPolyDenom{N}(x) \nearrow \sum_{k = 0}^\infty (k + 1) \, x^k \,, \qquad \text{for } |x| < 1, \text{ as } N \rightarrow \infty \,,
\end{equation}
such that the sequence in \cref{eq:truncFacSimple} allows us to invoke \cref{lem:monotonic}. We do this by constructing $\genWindowPoly{N}(x), \genWindowPolyDenom{N}(x)$ from smaller primitive polynomials $\primmPoly{j}(x)$, $\primmPolyDenom{j}(x)$ such that
\begin{gather}
    \genWindowPoly{N}(x) = \sum_{j = 0}^N \primmPoly{j}(x) \,, \qquad \genWindowPolyDenom{N}(x) = \sum_{j = 0}^N \primmPolyDenom{j}(x) \,, \label{eq:polydef} \\
        \E\left[\primmPoly{j}(\xMat)\right] \succeq 0 \,, \qquad \E\left[\primmPolyDenom{j}(\xMat)\right] \succ 0 \,, \label{eq:primmPsd}\\
        \primmPoly{j}(x) = \frac{2j - 1}{2j} \primmPolyDenom{j}(x)\,, \qquad \text{for } j \geq 1 \,, \label{eq:primmRatio} \\
        \primmPoly{0}(x) = 0 \,. 
\end{gather}
With this, we can expand \cref{eq:truncFacSimple} into the required form of \cref{lem:monotonic},
\begin{equation}
     \truncFac{N} = \frac{\sum_{j = 0}^N \langle \primmPoly{j}(\xMat) \rangle_{\sMat} }{\sum_{j = 0}^N \langle \primmPolyDenom{j}(\xMat) \rangle_{\sMat}} \equiv \frac{\sum_{j = 0}^N a_j}{\sum_{j = 0}^N b_j} \,.
\end{equation}
Note that property \cref{eq:primmPsd} implies $a_j \geq 0$ and $b_j > 0$, and the property \cref{eq:primmRatio} implies, for $j \geq 1$,
\begin{equation}
    \frac{a_j}{b_j} = \frac{ \langle \primmPoly{j}(\xMat) \rangle_{\sMat} }{\langle \primmPolyDenom{j}(\xMat) \rangle_{\sMat}} = \frac{2j - 1}{2j} \frac{\langle \primmPolyDenom{j}(\xMat) \rangle_{\sMat}}{\langle \primmPolyDenom{j}(\xMat) \rangle_{\sMat}} = \frac{2j - 1}{2j} \,,
\end{equation}
and for $j = 0$, we have $a_0 / b_0 = 0$. Hence, the ratio $a_j / b_j$ is monotonically increasing in $j$ and hence satisfies the requirement \cref{eq:monotonicRequirement} of \cref{lem:monotonic}. Therefore, the existence of such primitive polynomials $\primmPoly{N}(x)$, $\primmPolyDenom{N}(x)$ immediately implies that
\begin{gather}
    \truncFac{N} \rightarrow \optAugFac \text{ as } N \rightarrow \infty\,, \label{eq:truncFacConvergence} \\
    0 \leq \truncFac{1} \leq \truncFac{2} \leq \truncFac{3} \leq ... \leq \truncFac{N} \leq ... \leq \optAugFac \leq 1 \,,
\end{gather}
where \cref{eq:truncFacConvergence} follows from \cref{eq:polyconvergence}; the fact that $\optAugFac \leq 1$ follows from from $\truncFac{N} \rightarrow \optAugFac$ and the fact that $a_j \leq b_j$, and hence the numerator of $\sum_{j = 0}^N a_j / \sum_{j = 0}^N b_j$ is always bounded by the denominator, implying $\truncFac{N} \leq 1$ for all $N$; and the fact that $\truncFac{N} \geq 0$ for any $N$ comes from non-negativity of the numerator and denominator of $\truncFac{N}$.

To show that such primitive polynomials $\primmPoly{N}(x)$ and $\primmPolyDenom{N}(x)$ actually exist, we consider the following definition,
\begin{align}
    \primmPoly{0}(x) &\equiv 0 \,, \qquad \primmPolyDenom{0}(x) \equiv 1 \,, \\
    \primmPoly{1}(x) &\equiv x + \frac{1}{2} x^2 \,, \qquad \primmPolyDenom{1}(x) \equiv 2x + x^2 \,, \\
    \primmPoly{j}(x) &\equiv (2 j - 1) \left(\frac{1}{2} x^{2j - 2} + x^{2j - 1} + \frac{1}{2} x^{2j}\right) \,, \qquad \text{for } k \geq 2 \,, \\
   \primmPolyDenom{j}(x) &\equiv 2 j \, \left(\frac{1}{2} x^{2j - 2} + x^{2j - 1} + \frac{1}{2} x^{2j}\right) \,, \qquad \text{for } k \geq 2 \,.
\end{align}
To show this family of primitive polynomials satisfies the desired properties, note that, for $j \geq 2$,
\begin{equation}
    \primmPoly{j}(x) = \frac{2 j - 1}{2} x^{2j - 2} (x + 1)^2 \geq 0 \,.
\end{equation}
This implies $\primmPoly{j}(\xMat) \succeq 0$. Moreover, we can only have $\primmPoly{j}(\xMat) = \mat{0}$ if all of the eigenvalues of $\xMat$ are either $0$ or $-1$. Note that a $-1$ eigenvalue in $\xMat$ is impossible by virtue of the fact that $-\id \prec \xMat \prec \id$. Therefore, $\primmPoly{j}(\xMat) = \mat{0}$ is only possible if $\xMat = \mat{0}$. However, this cannot be the case almost surely, as $\xMat = \mat{0}$ implies $\sampMat = \truMat$. Therefore, with probability greater than $0$, we have that $\primmPoly{j}(\xMat) \succ 0$, implying
\begin{equation}
    \E[\primmPoly{j}(\xMat)] \succ \mat{0} \,, \qquad \E[\primmPolyDenom{j}(\xMat)] \succ \mat{0} \,.
\end{equation}
Furthermore, for $j = 1$, we have
\begin{equation}
     \E[\primmPoly{1}(\xMat)] = \E[ \xMat ] + \frac{1}{2} \E[ \xMat^2 ] \succ \mat{0} \,,
\end{equation}
where we have used the fact that $\E[\xMat] \succeq 0$ (from the fact that $\E[\sampMat] \preceq \truMat$) and the fact that $\xMat^2 \succ \mat{0}$ with probability greater than $0$ (unless $\sampMat = \truMat$ a.s.).

Finally, to show \cref{eq:polyconvergence}, we simply note that, for $k$ odd, and $N$ large enough, it is the case that
\begin{equation}
\begin{split}
    [x^k] \genWindowPoly{N}(x) &= [x^k] \primmPoly{(k + 1) / 2}(x) = k \,, \\
    [x^k] \genWindowPolyDenom{N}(x) &= [x^k]\primmPolyDenom{(k + 1) / 2}(x) = k + 1 \,,
\end{split}
\end{equation}
since $\primmPoly{(k + 1) / 2}(x)$ is the only primitive polynomial with a $x^k$ term in $\genWindowPoly{N}(x)$, and likewise for $\genWindowPolyDenom{N}(x)$. For $k \geq 2$ even, we have that 
\begin{equation}
\begin{split}
    [x^k] \genWindowPoly{N}(x) &= [x^k] (\primmPoly{k / 2}(x) + \primmPoly{k/2 + 1}(x)) = \frac{k - 1}{2} + \frac{k + 1}{2} = k \,, \\
    [x^k] \genWindowPolyDenom{N}(x) &= [x^k] (\primmPolyDenom{k / 2}(x) + \primmPolyDenom{k/2 + 1}(x)) = \frac{k}{2} + \frac{k + 2}{2} = k + 1 \,.
\end{split}
\end{equation}
Thus, the polynomials $\genWindowPoly{N}(x)$ and $\genWindowPolyDenom{N}(x)$ have all the desired properties. We restate the results of the past two sections in a theorem,
\begin{theorem} \label{thm:eag}
Under the assumptions in \cref{sec:assumptions}, consider operator shifting with shift $\augMat = \sampMat$ in energy norm $\|\cdot\|_{\truMat, \autoCor}$. Suppose that the random matrix $\sampMat \in \pdcone$ satisfies $\mat{0} \prec \sampMat \prec (2 - \varepsilon) \truMat$ almost surely. Then let $\truncFac{N}$ be the truncated approximations to the optimal shift factor $\optAugFac$, i.e.,
\begin{equation}
        \truncFac{N} = \frac{\sum_{k = 0}^{2N}\windowFunc{N}{k} \langle \xMat^k \rangle_{\sMat}}{ \sum_{k = 0}^{2N} \windowFuncDenom{N}{k} \langle \xMat^k \rangle_{\sMat}} = \frac{\E\left[\sum_{k = 0}^{2N} \windowFunc{N}{k} \, \sampleTrVec^T \truMat^{-1} (\errMat \truMat^{-1})^k \sampleTrVec \right]}{\E\left[\sum_{k = 0}^{2N} \windowFuncDenom{N}{k} \, \sampleTrVec^T \truMat^{-1} (\errMat \truMat^{-1})^k \sampleTrVec \right]} \,,
\end{equation}
where $\windowFunc{N}{k}$ and $\windowFuncDenom{N}{k}$ are given by
\begin{equation} \label{eq:softtruncwindows}
    \windowFunc{N}{k} = \begin{cases}
        k & k < 2N \\
        \frac{k - 1}{2} & k = 2N \\
        0 & \text{o.w.}
    \end{cases} \,, \qquad
    \windowFuncDenom{N}{k} = \begin{cases}
        k + 1 & k < 2N \\
        \frac{k}{2} & k = 2N \\
        0 & \text{o.w.}
    \end{cases} \,.
\end{equation}
Under these assumptions, we have that
\begin{gather*}
    \truncFac{N} \nearrow \optAugFac \text{ as } N \rightarrow \infty\,, \\
    0 \leq \truncFac{1} \leq \truncFac{2} \leq \truncFac{3} \leq ... \leq \truncFac{N} \leq ... \leq \optAugFac \leq 1 \,, \\
    \err{\sampMat^{-1}}{\truMat, \autoCor} \geq \err{(1 - \truncFac{1}) \sampMat^{-1}}{\truMat, \autoCor} \geq ... \geq \err{(1 - \truncFac{N}) \sampMat^{-1}}{\truMat, \autoCor} \geq ... \geq \err{(1 - \optAugFac) \sampMat^{-1}}{\truMat, \autoCor} \,.
\end{gather*}
\end{theorem}

\subsection{Hard Windowing} \label{sec:hardwindow}

The tradeoff for monotone convergence to the true shift factor $\optAugFac$ is that the windowing functions $\windowFunc{N}{k}$ and $\windowFuncDenom{N}{k}$ presented above --- which we will refer to as \emph{soft windowing functions} --- may be too conservative at low orders. When this is the case, one may instead choose to use \emph{hard windowing functions} that perform a hard truncation of the infinite Taylor series. That is, one may choose to instead use

\begin{equation} \label{eq:hardtruncwindows}
    \windowFunc{N}{k} = \begin{cases}
        k & k \leq 2N \\
        0 & \text{o.w.}
    \end{cases} \,, \qquad
    \windowFuncDenom{N}{k} = \begin{cases}
        k + 1 & k \leq 2N \\
        0 & \text{o.w.}
    \end{cases} \,.
\end{equation}

Under the conditions of \cref{thm:eag}, this choice of windowing function will still guarantee the convergence $\truncFac{N} \rightarrow \optAugFac$. However, we lose the monotonicity guarantees of the soft windowing functions unless one makes very stringent assumptions on the underlying distribution. That being said, in practice this technique can perform quite well, as indicated in our numerical experiments in \cref{sec:numerics}. To distinguish between truncated energy norm shifting with soft and hard windows, we will use the abbreviations \textbf{ES-T-S} and \textbf{ES-T-H} for truncated energy norm augmentation with soft and hard windows respectively. 

\subsection{Quick Start} \label{sec:quickstart}

For help readers with implementation, we provide explicit formulas for the shift factor $\augFac$ for low truncation orders, as well as a pseudo-code implementation of the different variants of energy norm augmentation.

\subsubsection{Explicit Formulas for Low Orders} \label{sec:explicitloworder}

 First, we provide formulas for low orders of the algorithm presented in the previous section. In the subsequent formulas, we let
\begin{equation}
         \errMat \equiv \sampMat - \truMat, \qquad
         \sampleTrVec \sim \mathcal{N}(\mat{0}, \autoCor), \qquad \sampMat \sim \truMatDist, \qquad \sampleTrVec \independent \sampMat \,.
\end{equation}

\begin{enumerate}
    \item \textbf{ES-T-S, Order 2}:
     \begin{equation}
        \truncFac{1}^{\text{ES-T-S}} = \frac{\E\left[ \sampleTrVec^T( \frac{1}{2} \truMat^{-1} \errMat \truMat^{-1} \errMat \truMat^{-1} + \truMat^{-1} \errMat \truMat^{-1}) \sampleTrVec\right]}{\E \left[ \sampleTrVec^T ( \truMat^{-1} \errMat \truMat^{-1} \errMat \truMat^{-1} + 2\truMat^{-1} \errMat \truMat^{-1} + \truMat^{-1})  \sampleTrVec \right]} \,.
    \end{equation}
    
    \item \textbf{ES-T-S, Order 2, Mean-Zero Error}:
    
    In many cases, the error matrix $\errMat$ may be mean zero, i.e., $\E[\errMat] = \mat{0}$. When this happens, the above expression has an even simpler form,
    
    \begin{equation}
        \truncFac{1}^{\text{ES-T-S}} = \frac{1}{2} \frac{\E\left[ \sampleTrVec^T( \truMat^{-1} \errMat \truMat^{-1} \errMat \truMat^{-1}) \sampleTrVec\right]}{\E \left[ \sampleTrVec^T ( \truMat^{-1} \errMat \truMat^{-1} \errMat \truMat^{-1} + \truMat^{-1})  \sampleTrVec \right]} \,.
    \end{equation}
    
    \item \textbf{ES-T-H, Order 2}:
    
    \begin{equation}
        \truncFac{1}^{\text{ES-T-H}} = \frac{\E\left[ \sampleTrVec^T( 2 \truMat^{-1} \errMat \truMat^{-1} \errMat \truMat^{-1} + \truMat^{-1} \errMat \truMat^{-1}) \sampleTrVec\right]}{\E \left[ \sampleTrVec^T ( 3 \truMat^{-1} \errMat \truMat^{-1} \errMat \truMat^{-1} + 2\truMat^{-1} \errMat \truMat^{-1} + \truMat^{-1})  \sampleTrVec \right]} \,.
    \end{equation}

    \item \textbf{ES-T-H, Order 2, Mean-Zero Error}:
        In many cases, the error matrix $\errMat$ may be mean zero, i.e., $\E[\errMat] = \mat{0}$. When this happens, the above expression has an even simpler form,
    \begin{equation}
        \truncFac{1}^{\text{ES-T-H}} =  \frac{\E\left[ \sampleTrVec^T( 2 \truMat^{-1} \errMat \truMat^{-1} \errMat \truMat^{-1}) \sampleTrVec\right]}{\E \left[ \sampleTrVec^T ( 3 \truMat^{-1} \errMat \truMat^{-1} \errMat \truMat^{-1} + \truMat^{-1})  \sampleTrVec \right]} \,.
    \end{equation}

\end{enumerate}

\subsection{Algorithm}

We give the full meta algorithm for all favors of energy norm augmentation in \cref{alg:augMetaAlgorithm}. Note that in \cref{alg:augMetaAlgorithm}, like in \cref{alg:genSemiBayesian}, we replace expectations with bootstrapped Monte Carlo estimators. If one wants to use the simplified expressions provided above in \cref{sec:explicitloworder}, one must similarly replace the expectations with sampled and bootstrapped versions. This process is fairly straightforward, for example, for ES-T-H, Order 2, Mean-Zero Error, we get
\begin{equation}
    \bootstrapMonteCarloTruncAugFac{1}^{\text{ES-T-H}} = \frac{ \sum_{i = 0}^M \, \sampSampleTrVec{i}^T ( 2 \sampMat^{-1} (\sampBootstrapSampMat{i} - \sampMat) \sampMat^{-1} (\sampBootstrapSampMat{i} - \sampMat) \sampMat^{-1}) \sampSampleTrVec{i}}{\sum_{i = 0}^M \, \sampSampleTrVec{i}^T ( 3 \sampMat^{-1} (\sampBootstrapSampMat{i} - \sampMat) \sampMat^{-1} (\sampBootstrapSampMat{i} - \sampMat) \sampMat^{-1} + \sampMat^{-1}) \sampSampleTrVec{i}} \,,
\end{equation}
where $\sampBootstrapSampMat{i}$ and $\sampSampleTrVec{i}$ are defined as in \cref{alg:augMetaAlgorithm}.

\begin{algorithm}[t] 
\caption{Energy-Norm Operator Shfiting Meta-algorithm}
\hspace*{\algorithmicindent} \textbf{Input}: A right hand side $\rhs$, an operator sample $\sampMat \sim \matDist$ with corresponding parameters $\sampParam \in \paramSpace$, a choice of second moment matrix $\autoCor$, a choice of matrix $\modMat$ satisfying the compatibility conditions, sample count $M$. \\
\hspace*{\algorithmicindent} \textbf{Output}: An estimate $\impSol$ of $\truMat^{-1} \rhs$. \\
\begin{algorithmic}[1] \label{alg:augMetaAlgorithm}
\State Factorize/preprocess $\sampMat$ to precompute $\sampMat^{-1}$ if necessary.
\State Draw $M$ i.i.d. bootstrap samples $\sampBootstrapSampMat{1}, ..., \sampBootstrapSampMat{M} \sim \bootstrapDist$.
\State Draw $M$ i.i.d. bootstrap samples $\sampSampleTrVec{1}, ..., \sampSampleTrVec{M} \sim \mathcal{N}(\vc{0}, \autoCor)$.
\If{using Truncated Energy-Norm Shifting \textbf{(ES-T)}}
\If{using Soft Truncation \textbf{(ES-T-S)}}
\State Let
\begin{equation*}
    \windowFunc{N}{k} = \begin{cases}
        k & k < 2N \\
        \frac{k - 1}{2} & k = 2N \\
        0 & \text{o.w.}
    \end{cases} \,, \qquad
    \windowFuncDenom{N}{k} = \begin{cases}
        k + 1 & k < 2N \\
        \frac{k}{2} & k = 2N \\
        0 & \text{o.w.}
    \end{cases} \,.
\end{equation*}
\ElsIf{using Hard Truncation \textbf{(ES-T-H)}}
\State Let
\begin{equation*} 
    \windowFunc{N}{k} = \begin{cases}
        k & k \leq 2N \\
        0 & \text{o.w.}
    \end{cases} \,, \qquad
    \windowFuncDenom{N}{k} = \begin{cases}
        k + 1 & k \leq 2N \\
        0 & \text{o.w.}
    \end{cases} \,.
\end{equation*}
\EndIf
\State Assign
\begin{equation*}
    \bootstrapMonteCarloAugFac \gets \frac{\sum_{i = 0}^M \sum_{k = 0}^\infty \windowFunc{N}{k} \, \sampSampleTrVec{i}^T \modMat^T \sampMat^{-1} ((\sampBootstrapSampMat{i} - \sampMat) \sampMat^{-1})^k \sampSampleTrVec{i}}{\sum_{i = 0}^M \sum_{k = 0}^\infty \windowFuncDenom{N}{k} \, \sampSampleTrVec{i}^T \modMat^T \sampMat^{-1} ((\sampBootstrapSampMat{i} - \sampMat) \sampMat^{-1})^k \modMat \sampSampleTrVec{i}} \,,
\end{equation*}
where
\ElsIf{using Untruncated Energy-Norm Shifting \textbf{(ES)}}
\State Assign
\begin{equation*}
    \bootstrapMonteCarloAugFac \gets \frac{\sum_{i = 0}^M \, \sampSampleTrVec{i}^T \modMat^T (\sampBootstrapSampMat{i}^{-1} \sampMat  \sampBootstrapSampMat{i}^{-1} -\sampBootstrapSampMat{i}^{-1}) \sampSampleTrVec{i}}{\sum_{i = 0}^M \, \sampSampleTrVec{i}^T \modMat^T (\sampBootstrapSampMat{i}^{-1} \sampMat  \sampBootstrapSampMat{i}^{-1}) \modMat \sampSampleTrVec{i}} \,,
\end{equation*}
\EndIf
\State Clamp $\bootstrapMonteCarloAugFac \gets \max(0, \bootstrapMonteCarloAugFac)$.
\State Assign $\impSol \gets (\sampMat^{-1} - \bootstrapMonteCarloAugFac  \sampMat^{-1} \modMat) \rhs$
\State Return $\impSol$.
\end{algorithmic}
\end{algorithm}

\section{Shifted Base-point Estimation} \label{sec:variableBasepoint}

Obviously, the issue with the above theorem is that the restriction that $\supp(\truMatDist) \subset \{ \sampMat \prec (2 - \varepsilon) \truMat \}$ is quite restrictive from a problem standpoint; there are many natural problems that do not fall into this setting. Recall that this assumption comes from the fact that the infinite Taylor series for $\sampMat^{-1}$ about base-point $\truMat$ only converges when $\sampMat \prec (2- \varepsilon) \truMat$.

We address this issue with a technique we call \emph{shifted base-point estimation}. The key idea is to grow the region of convergence of the infinite Taylor series by changing the base-point of the Taylor series expansion. If we make the assumption that the distribution $\truMatDist$ is bounded, then there must exist some $\basepointfactor \geq 1$ such that $\sampMat \prec \basepointfactor \truMat$ for every $\sampMat$ in the support of $\truMatDist$.  \cref{lem:regionofconvergence} then tells us that we are justified in taking an infinite Taylor expansion about base-point $\basepointfactor \truMat$,
\begin{equation}
    \sampMat^{-1} = \truMat^{-1/2} \left[ \sum_{k = 0}^\infty \frac{1}{\basepointfactor^{k+1}} (- \truMat^{-1/2} \basepointerrMat \truMat^{-1/2})^k \right] \truMat^{-1/2} \,.
\end{equation}
where $\basepointerrMat \equiv \sampMat - \basepointfactor \truMat$. In general, the best values of $\basepointfactor$ are those that are as small as possible while maintaining that the support of $\truMatDist$ lies within $\{ \sampMat \prec \basepointfactor \truMat \}$, as the accuracy of a truncated series becomes less farther away from the base-point.

With the above, one can repeat the calculations of \cref{sec:energyNorm} practically verbatim to derive the infinite series expression for the optimal shift factor,
\begin{equation} \label{eq:normA_optBeta_new}
    \optAugFac = \frac{\sum_{k = 0}^\infty (k + 1 - \basepointfactor) \, \basepointfactor^{-k} \, \langle ( - \truMat^{-1/2} \basepointerrMat \truMat^{-1/2} )^k \rangle_{\sMat}}{\sum_{k = 0}^\infty (k + 1) \, \basepointfactor^{-k} \, \langle ( - \truMat^{-1/2} \basepointerrMat \truMat^{-1/2} )^k\rangle_{\sMat}} \,.
\end{equation}
for notational simplicity, define
\begin{equation}
    \basepointxMat \equiv \basepointfactor^{-1} (-\truMat^{-1/2} \basepointerrMat \truMat^{-1/2}) \,.
\end{equation}
Note that
\begin{equation}
    \basepointxMat = \id - \basepointfactor^{-1} \truMat^{-1/2} \sampMat \truMat^{-1/2} \,.
\end{equation}
From the fact that $0 \prec \sampMat \prec \basepointfactor \truMat$, it follows that
\begin{equation}
    0 \prec \basepointxMat \prec \id \, .
\end{equation}
Therefore, the expression for the optimal shift factor becomes
\begin{equation} 
    \optAugFac = \frac{\sum_{k = 0}^\infty (k + 1 - \basepointfactor) \, \basepointfactor^{-k} \, \langle \basepointxMat^k \rangle_{\sMat}}{\sum_{k = 0}^\infty (k + 1) \, \basepointfactor^{-k} \, \langle \basepointxMat^k\rangle_{\sMat}} \,.
\end{equation}

From here, we follow the same schema to define the truncation of the infinite series above,
\begin{equation} 
    \truncFac{N} = \frac{\sum_{k = 0}^\infty \windowFuncBP{N}{k} \, \basepointfactor^{-k} \, \langle \basepointxMat^k \rangle_{\sMat}}{\sum_{k = 0}^\infty  \windowFuncBPDenom{N}{k} \, \basepointfactor^{-k} \, \langle \basepointxMat^k\rangle_{\sMat}} \,.
\end{equation}
and the form we will use for Monte Carlo,
\begin{equation}
    \truncFac{N} =  \frac{\E\left[\sum_{k = 0}^\infty \windowFuncBP{N}{k}  \, \basepointfactor^{-k}\, \sampleTrVec^T \truMat^{-1} (\basepointerrMat \truMat^{-1})^k \sampleTrVec \right]}{\E\left[\sum_{k = 0}^\infty \windowFuncBPDenom{N}{k} \, \basepointfactor^{-k} \, \sampleTrVec^T \truMat^{-1} (\basepointerrMat \truMat^{-1})^k \sampleTrVec \right]} \,,
\end{equation}
where $\windowFuncBP{N}{k}$ and $\windowFuncBPDenom{N}{k}$ are new window functions that converge to $k + 1 - \basepointfactor$ and $k + 1$ respectively. To show that this has the same properties as the truncated shift factors in the previous section, we simply repeat the proof from the previous section, but with a few small changes. First, 
Now, we repeat the previous section to obtain the polynomial expression for $\truncFac{N}$,
\begin{equation}
    \truncFac{N} = \frac{\langle \genWindowBPPoly{N}(\basepointxMat) \rangle_{\sMat}}{\langle \genWindowDenomBPPoly{N}(\basepointxMat) \rangle_{\sMat}} \,,
\end{equation}
where, once again
    \begin{gather}
    \genWindowBPPoly{N}(x) = \sum_{j = 0}^N \primmPolyBP{j}(x) \,, \qquad \genWindowDenomBPPoly{N}(x) = \sum_{j = 0}^N \primmPolyDenomBP{j}(x) \,, \label{eq:polydef2}
\end{gather}
For brevity of notation, we define the quantity,
\begin{equation}
    \transformedMean \equiv 1 - \basepointfactor^{-1} \,.
\end{equation}

Now, our monotonicity analysis in this section is based upon the observation that for $x \geq 0$, it is the case that
\begin{equation}
    x^j (x - \transformedMean) \geq \transformedMean^j (x - \transformedMean) \,,
\end{equation}
and therefore, it is also the case that, for $x \geq 0$,
\begin{equation}
    x^j (x^k - \transformedMean^k) = x^j (x^{k - 1} + \transformedMean x^{k - 2} + ... + \transformedMean^{k - 2} x + \transformedMean^{k - 1}) (x - \transformedMean) \geq k \transformedMean^{j + k - 1} (x - \transformedMean) \,.
\end{equation}
Whereas the analysis in the previous section built a monotonic sequence of polynomials that were positive everywhere, the above formula allows us to build a monotonic sequence of polynomials that are positive in expectation, but not necessarily positive everywhere. To do this, we first note that by our $\E[\sampMat] \preceq \truMat$ assumption,
\begin{equation}
    \E [\basepointxMat] = \E[\id - \basepointfactor^{-1} \truMat^{-1/2} \sampMat \truMat^{-1/2}] \succeq \id - \basepointfactor^{-1} \truMat^{-1/2} \truMat \truMat^{-1/2} = \transformedMean \id \,.
\end{equation}
Hence, the above polynomial inequalities imply that
\begin{equation} \label{eq:posPoly}
    \E[ \basepointxMat^j (\basepointxMat^k - \transformedMean^k \id)] \succeq \E[k \transformedMean^{j + k - 1} (\basepointxMat - \transformedMean \id)] \succeq \mat{0} \,.
\end{equation}
This allows us to use the matrix polynomials $\basepointxMat^j (\basepointxMat^k - \transformedMean^k \id)$ as building blocks for a series that converges monotonically to the desired $\optAugFac$. The final observation that one needs to build the series is the fact that
\begin{equation}
    \sum_{k = 0}^\infty \transformedMean^k = \frac{1}{1 - \transformedMean} = \basepointfactor \,.
\end{equation}

With this established, we finally define the primitive polynomials
\begin{equation}
    \primmPolyBP{k}(x) \equiv k x^k - \transformedMean x^{k - 1} - \transformedMean^2 x^{k - 2} - ... - \transformedMean^{k - 1} x - \transformedMean^k = (k + 1) x^k - \sum_{j = 0}^k \transformedMean^{j} x^{k - j} \,.
\end{equation}
By \cref{eq:posPoly}, we have that
\begin{equation}
    \E[\primmPolyBP{k}(\basepointxMat)] = \E\left[(k + 1) \basepointxMat^k - \sum_{j = 0}^k \transformedMean^{j} \basepointxMat^{k - j}\right] = \sum_{j = 0}^k \E[\basepointxMat^{k - j} (\basepointxMat^j - \transformedMean^j)] \succeq \mat{0} \,.
\end{equation}
However, if one examines the individual terms of the composite sum $\sum_{k = 0}^\infty \primmPolyBP{k}(x)$ by powers of $x$, one observes that, for $x \in [0, 1)$,
\begin{equation}
    [x^j] \sum_{k = 0}^\infty \primmPolyBP{k}(x) = j - \sum_{k = 1}^\infty \transformedMean^k = j + 1 - \basepointfactor \,.
\end{equation}
Ergo, for $x \in [0, 1)$, we have that
\begin{equation}
     \genWindowBPPoly{N}(x) = \sum_{k = 0}^N \primmPolyBP{k}(x) \nearrow \sum_{k = 0}^\infty (k + 1 - \basepointfactor) x^k \,.
\end{equation}
And therefore, if we also take
\begin{equation}
    \primmPolyDenomBP{k}(x) = (k + 1) x^k \,,\qquad \genWindowDenomBPPoly{N}(x) = \sum_{k = 0}^N \primmPolyDenomBP{k}(x) = \sum_{k = 0}^N (k + 1) x^k \,,
\end{equation}
and note that
\begin{equation}
    \optAugFac = \frac{\sum_{k = 0}^\infty (k + 1 - \basepointfactor) \, \langle \basepointxMat^k \rangle_{\sMat}}{\sum_{k = 0}^\infty (k + 1) \, \langle \basepointxMat^k \rangle_{\sMat}} \,,
\end{equation}
we can conclude that
\begin{equation} \label{eq:truncFacPoly}
    \truncFac{N} = \frac{\langle \genWindowBPPoly{N}(\basepointxMat) \rangle_{\sMat}}{\langle \genWindowDenomBPPoly{N}(\basepointxMat)\rangle_{\sMat}} \rightarrow \optAugFac \,.
\end{equation}
and from the fact that the numerator is a sum of positive terms, and the fact that $\genWindowBPPoly{N}(\basepointxMat) \preceq \genWindowDenomBPPoly{N}(\basepointxMat)$ by construction, it therefore follows that
\begin{equation}
    0 \leq \truncFac{N} \leq 1\,,\qquad 0 \leq \optAugFac \leq 1 \,.
\end{equation}

To achieve a proof of monotonicity of the $\truncFac{N}$, we appeal to \cref{lem:monotonic}, which necessitates that we verify the inequality
\begin{equation}
\begin{split}
    a_k b_{k - 1} = \langle \primmPolyBP{k}(\basepointxMat) \rangle_{\sMat} \, \langle \primmPolyDenomBP{k - 1}(\basepointxMat) \rangle_{\sMat} \geq     \langle \primmPolyBP{k - 1}(\basepointxMat) \rangle_{\sMat} \, \langle \primmPolyDenomBP{k}(\basepointxMat) \rangle_{\sMat} = a_{k - 1} b_k\,.
\end{split}
\end{equation}
To do this, let us subtract and expand the above terms
\begin{equation} \label{eq:boundingstuff}
\begin{split}
    a_k b_{k - 1} - a_{k - 1} b_k &= \langle \primmPolyBP{k}(\basepointxMat) \rangle_{\sMat} \, \langle \primmPolyDenomBP{k - 1}(\basepointxMat) \rangle_{\sMat} - \langle \primmPolyBP{k - 1}(\basepointxMat) \rangle_{\sMat} \, \langle \primmPolyDenomBP{k}(\basepointxMat) \rangle_{\sMat} \\
    &= k \, \langle \basepointxMat^{k - 1} \rangle_{\sMat} \left((k + 1) \, \langle \basepointxMat^k \rangle_{\sMat} - \sum_{j = 0}^k \transformedMean^j \langle \basepointxMat^{k - j} \rangle_{\sMat}\right) \\
    &\qquad - (k + 1) \, \langle \basepointxMat^{k} \rangle_{\sMat} \left(k\, \langle \basepointxMat^{k - 1} \rangle_{\sMat} - \sum_{j = 0}^{k - 1} \transformedMean^j \langle \basepointxMat^{k - j - 1} \rangle_{\sMat} \right) \\
    &= (k + 1) \sum_{j = 0}^{k - 1} \transformedMean^j \, \langle \basepointxMat^{k} \rangle_{\sMat} \langle \basepointxMat^{k  - j - 1} \rangle_{\sMat} - k \sum_{j = 0}^{k} \transformedMean^j \, \langle \basepointxMat^{k - 1} \rangle_{\sMat} \, \langle \basepointxMat^{k - j} \rangle_{\sMat} \\
    &= k \sum_{j = 0}^{k - 1} \transformedMean^j \, \langle \basepointxMat^{k} \rangle_{\sMat} \langle \basepointxMat^{k  - j - 1} \rangle_{\sMat} - k \sum_{j = 0}^{k - 1} \transformedMean^j \, \langle \basepointxMat^{k - 1} \rangle_{\sMat} \, \langle \basepointxMat^{k - j} \rangle_{\sMat} \\
    &\qquad +\sum_{j = 0}^{k - 1} \transformedMean^j \, \langle \basepointxMat^{k} \rangle_{\sMat} \, \langle \basepointxMat^{k  - j - 1} \rangle_{\sMat} - k\, \transformedMean^k \, \langle \basepointxMat^{k - 1} \rangle_{\sMat} \, \langle \id \rangle_{\sMat} \,.
\end{split}
\end{equation}

We now appeal to the following lemma, which allows us to compare terms across the two sums above,

\begin{restatable}{lemma}{inequalitylemma}
Let $\hat{\mat{X}}$ be a random matrix such that $\hat{\mat{X}} \succeq \mat{0}$ a.s. For $i \geq j$ and $r \geq 0$, and any symmetric positive semi-definite matrix $\sMat \succeq \mat{0}$, we have that
\begin{equation} \label{eq:transformationinequality}
    \langle \hat{\mat{X}}^{i + r} \rangle_{\sMat} \, \langle \hat{\mat{X}}^{j - r} \rangle_{\sMat} \geq \langle \hat{\mat{X}}^i \rangle_{\sMat} \, \langle \hat{\mat{X}}^j \rangle_{\sMat}  \,.
\end{equation}
\end{restatable}
The proof of this fact is relegated to the appendix. However, applying this lemma to the above \cref{eq:boundingstuff} gives us
\begin{equation} \label{eq:dafinalstraw}
\begin{split}
     a_k b_{k - 1} - a_{k - 1} b_k &\geq \sum_{j = 0}^{k - 1} \transformedMean^j \, \langle \basepointxMat^{k} \rangle_{\sMat} \, \langle \basepointxMat^{k  - j - 1} \rangle_{\sMat} - k \, \transformedMean^k \, \langle \basepointxMat^{k - 1} \rangle_{\sMat} \langle \id \rangle_{\sMat} \\
     &=\sum_{j = 0}^{k - 1} \langle \basepointxMat^{k} \rangle_{\sMat} \, \left(\transformedMean^j \langle \basepointxMat^{k  - j - 1} \rangle_{\sMat} \right) - \sum_{j = 0}^{k - 1} \left(\transformedMean\, \langle \basepointxMat^{k - 1} \rangle_{\sMat} \right) \left(\transformedMean^{k - 1} \langle \id \rangle_{\sMat} \right) \,.
\end{split}
\end{equation}
Finally, we note that \cref{eq:posPoly} gives us $\transformedMean^j \E[\basepointxMat^{k - j - 1}] \succeq \transformedMean^{k - 1} \id $ and therefore 
\begin{equation}
    \transformedMean^j \, \langle \basepointxMat^{k - j - 1} \rangle_{\sMat} \geq \transformedMean^{k - 1} \langle \id \rangle_{\sMat} \,.
\end{equation}
Moreover, \cref{eq:posPoly} also gives us that $\E[\basepointxMat^k] \succeq \transformedMean \E[\basepointxMat^{k - 1}]$ and therefore
\begin{equation}
    \langle \basepointxMat^{k} \rangle_{\sMat} \geq \transformedMean \, \langle \basepointxMat^{k - 1} \rangle_{\sMat}
\end{equation}
Noting the the above two inequalities are between positive numbers and then substituting the above two inequalities into \cref{eq:posPoly} gives the desired result
\begin{equation}
    a_k b_{k - 1} - a_{k - 1} b_k \geq 0 \,.
\end{equation}
Thus, the truncated estimators $\truncFac{N}$ form a positive monotonic sequence that converges to $\optAugFac$. To summarize, we restate the results we have just proved into a theorem,

\begin{theorem} \label{thm:steag}
Under the assumptions in \cref{sec:assumptions}, consider operator shifting with shift $\augMat = \sampMat$ in energy norm $\|\cdot\|_{\truMat, \autoCor}$. Suppose that the random matrix $\sampMat \in \pdcone$ satisfies $\mat{0} \prec \sampMat \prec \basepointfactor \truMat$ almost surely. Then let $\truncFac{N}$ be the truncated approximations to the optimal shift factor $\optAugFac$, i.e.,
\begin{equation} \label{eq:shifted}
            \truncFac{N} = \frac{\sum_{k = 0}^N \windowFuncBP{N}{k} \, \basepointfactor^{-k} \, \langle \basepointxMat^k \rangle_{\sMat}}{\sum_{k = 0}^N  \windowFuncBPDenom{N}{k} \, \basepointfactor^{-k} \, \langle \basepointxMat^k\rangle_{\sMat}} =  \frac{\E\left[\sum_{k = 0}^N \windowFuncBP{N}{k}  \, \basepointfactor^{-k}\, \sampleTrVec^T \truMat^{-1} (\basepointerrMat \truMat^{-1})^k \sampleTrVec \right]}{\E\left[\sum_{k = 0}^N \windowFuncBPDenom{N}{k} \, \basepointfactor^{-k} \, \sampleTrVec^T \truMat^{-1} (\basepointerrMat \truMat^{-1})^k \sampleTrVec \right]} \,,
\end{equation}
where $\windowFuncBP{N}{k}$ and $\windowFuncBPDenom{N}{k}$ are given by
\begin{equation} \label{eq:windowfuncdefs}
    \windowFuncBP{N}{k} = \begin{cases}
        (k + 1) - \sum_{j = k}^N \transformedMean^{j - k} & k \leq N \\
        0 & \text{o.w.}
    \end{cases} \,, \qquad
    \windowFuncBPDenom{N}{k} = \begin{cases}
        k + 1 & k \leq N \\
        0 & \text{o.w.}
    \end{cases} \,,
\end{equation}
and $\transformedMean = 1 - \basepointfactor^{-1}$. Under these assumptions, we have that
\begin{gather*}
    \truncFac{N} \nearrow \optAugFac \text{ as } N \rightarrow \infty\,, \\
    0 \leq \truncFac{1} \leq \truncFac{2} \leq \truncFac{3} \leq ... \leq \truncFac{N} \leq ... \leq \optAugFac \leq 1 \,, \\
    \err{\sampMat^{-1}}{\truMat, \autoCor} \geq \err{(1 - \truncFac{1}) \sampMat^{-1}}{\truMat, \autoCor} \geq ... \geq \err{(1 - \truncFac{N}) \sampMat^{-1}}{\truMat, \autoCor} \geq ... \geq \err{(1 - \optAugFac) \sampMat^{-1}}{\truMat, \autoCor} \,.
\end{gather*}
\end{theorem}

\section{Accelerating Shifted Base-point Estimation} \label{sec:accel}

In practice, while the formula \cref{eq:shifted} provides a positive, monotonically increasing series of estimates $\truncFac{N}$ for the optimal $\optAugFac$ which only use $N$ powers of the matrix $\basepointerrMat \truMat^{-1}$, note that the larger one takes the factor $\basepointfactor$, the poorer the accuracy of the truncated approximation near the matrix $\truMat$, where most of the probability distribution is concentrated. Therefore, while we get a guarantee of an estimate that will decrease the value of the objective $\errBayes{\cdot}{\truMat}$, the convergence to the optimal factor $\optAugFac$ might be very slow as a result, necessitating larger and larger powers of $\basepointerrMat \truMat^{-1}$. Thus, in practice it is often a good idea to let the quantity $\basepointfactor$ be a function of the sample $\sampMat$ such that $\sampMat \preceq \basepointfactorFunc \truMat$. This means that, instead of using the estimator $\truncFac{N}$ in \cref{eq:shifted} above, we use
\begin{equation} \label{eq:accelTruncFac}
     \accelTruncFac{N} =  \frac{\E\left[\basepointfactorFunc^{-2} \sum_{k = 0}^\infty \accelWindowFuncBP{N}{k}  \, \basepointfactorFunc^{-k}\, \sampleTrVec^T \truMat^{-1} (\accelBasepointerrMat \truMat^{-1})^k \sampleTrVec \right]}{\E\left[\basepointfactorFunc^{-2} \sum_{k = 0}^\infty \accelWindowFuncBPDenom{N}{k} \, \basepointfactorFunc^{-k} \, \sampleTrVec^T \truMat^{-1} (\accelBasepointerrMat \truMat^{-1})^k \sampleTrVec \right]} \,,
\end{equation}
where one choice of $\basepointfactorFunc$ is 
\begin{equation}
    \basepointfactorFunc = \|\truMat^{-1/2} \sampMat \truMat^{-1/2}\|_2 \,,
\end{equation}
i.e., the smallest value for which $\sampMat \preceq \basepointfactorFunc \truMat$, and the windowing functions $\windowFuncBP{N}{k}$ and $\windowFuncBPDenom{N}{k}$ are defined as in \cref{eq:windowfuncdefs}. In practice, one may choose to approximate $\basepointfactorFunc$ instead of computing it exactly. Note in \cref{eq:accelTruncFac} the reintroduction of the $\basepointfactorFunc^{-2}$ terms in the numerator and denominator; originally these terms passed out of the expectation and cancelled, but now the explicit dependence on $\sampMat$ prevents this cancellation from happening. 

Computing $\|\truMat^{-1/2} \sampMat \truMat^{-1/2}\|_2$ can be done with power method. In particular, with probability 1, if $\trvec \in \Rdim$ is sampled from a distribution continuous with respect to the Lebesgue measure on its support, we have that
\begin{equation}
\begin{split}
    \basepointfactorFunc &= \lim_{k \rightarrow \infty} \frac{\|(\truMat^{-1/2} \sampMat \truMat^{-1/2})^k \trvec \|_2}{\|(\truMat^{-1/2} \sampMat \truMat^{-1/2})^{k - 1} \trvec \|_2} \\
    &= \lim_{k \rightarrow \infty} \sqrt{\frac{\trvec^T \truMat^{1/2} (\truMat^{-1} \sampMat)^k \truMat^{-1} (\sampMat \truMat^{-1} )^k \truMat^{1/2} \trvec }{\trvec^T \truMat^{1/2} (\truMat^{-1} \sampMat)^{k - 1} \truMat^{-1} (\sampMat \truMat^{-1} )^{k - 1} \truMat^{1/2} \trvec } }  \,.
\end{split}
\end{equation}
Since $\truMat$ is non-singular, transforming the random variable $\trvec$ by $\truMat^{1/2}$ transforms the corresponding distribution into a distribution continuous with respect to the Lebesgue measure on its support. Therefore, it is sufficient to compute/approximate
\begin{equation}
    \begin{split}
        \basepointfactorFunc &= \lim_{k \rightarrow \infty} \sqrt{\frac{\trvec^T (\truMat^{-1} \sampMat)^k \truMat^{-1} (\sampMat \truMat^{-1} )^k \trvec }{\trvec^T (\truMat^{-1} \sampMat)^{k - 1} \truMat^{-1} (\sampMat \truMat^{-1} )^{k - 1} \trvec } } = \lim_{k \rightarrow \infty} \frac{\| (\sampMat \truMat^{-1} )^k \trvec \|_{\truMat^{-1}}}{\| (\sampMat \truMat^{-1} )^{k - 1} \trvec \|_{\truMat^{-1}}} \,,
    \end{split}
\end{equation}
and as a result, we do not actually need to know $\truMat^{1/2}$ or $\truMat^{-1/2}$ to be able to compute the correct value of $\basepointfactor$. 

Now, to produce an algorithm, we follow the template of \cref{sec:genaugalg} -- we bootstrap $\truMat$ by replacing it with our sampled $\sampMat$ and bootstrap the expectation by using the distribution $\bootstrapDist$ instead of the true distribution $\truMatDist$. This nets us the approximate estimator
\begin{equation}
    \bootstrapTruncAugFac{N}(\sampMat) = \frac{\E_{\sampleTrVec  \sim \mathcal{N}(\vc{0},\combA), \bootstrapSampMat \sim \bootstrapDist}\left[\sum_{k = 0}^\infty \bootAccelWindowFuncBP{N}{k}{\bootstrapSampMat}  \, \basepointfactorFuncOf{\bootstrapSampMat}^{-k-2}\, \sampleTrVec^T \sampMat^{-1} (\bootAccelBasepointerrMat{\bootstrapSampMat} \sampMat^{-1})^k \sampleTrVec \right]}{\E_{\sampleTrVec  \sim \mathcal{N}(\vc{0},\combA), \bootstrapSampMat \sim \bootstrapDist}\left[\sum_{k = 0}^\infty \bootAccelWindowFuncBPDenom{N}{k}{\bootstrapSampMat} \, \basepointfactorFuncOf{\bootstrapSampMat}^{-k-2} \, \sampleTrVec^T \sampMat^{-1} (\bootAccelBasepointerrMat{\bootstrapSampMat} \sampMat^{-1})^k \sampleTrVec \right]} \,,
\end{equation}
where $\bootAccelBasepointerrMat{\bootstrapSampMat} = \sampMat - \basepointfactorFuncOf{\bootstrapSampMat} \bootstrapSampMat$. The above quantity can be estimated by Monte Carlo by computing
\begin{equation}
    \bootstrapMonteCarloTruncAugFac{N}(\sampMat) = \frac{\sum_{i = 0}^M \sum_{k = 0}^\infty \bootAccelWindowFuncBP{N}{k}{\sampBootstrapSampMat{i}}  \, \basepointfactorFuncOf{\sampBootstrapSampMat{i}}^{-k-2}\, \sampSampleTrVec{i}^T \sampMat^{-1} (\bootAccelBasepointerrMat{\sampBootstrapSampMat{i}} \sampMat^{-1})^k \sampSampleTrVec{i} }{\sum_{i = 0}^M \sum_{k = 0}^\infty \bootAccelWindowFuncBPDenom{N}{k}{\sampBootstrapSampMat{i}} \, \basepointfactorFuncOf{\sampBootstrapSampMat{i}}^{-k-2} \, \sampSampleTrVec{i}^T \sampMat^{-1} (\bootAccelBasepointerrMat{\sampBootstrapSampMat{i}} \sampMat^{-1})^k \sampSampleTrVec{i}} \,,
\end{equation}
where
\begin{equation}
\begin{aligned}[c]
\sampBootstrapSampMat{1}, ..., \sampBootstrapSampMat{M} &\sim \bootstrapDist\\
\sampSampleTrVec{1}, ..., \sampSampleTrVec{M} &\sim \mathcal{N}(\vc{0}, \autoCor)
\end{aligned}
\qquad
\begin{aligned}[c]
&\text{ i.i.d.}, \\
&\text{ i.i.d.}
\end{aligned}
\end{equation}
The full algorithm is presented in \cref{alg:truncatedEnergyAug}. 

Unfortunately, we do not believe that the monotonic guarantees of the previous two sections carry over when acceleration is applied. While it is not difficult to prove that the terms underneath the expectations in \cref{eq:accelTruncFac} become more accurate point-wise in $\sampMat$ (as we are shifting the base-point of the Taylor expansion closer to the point we are evaluating), it may be possible to construct contrived examples where this produces less accurate estimates of the expectations. However, we strongly believe that in almost all practical use cases, one should expect a significant improvement in accuracy in using this technique, as the reduction in truncation error is extremely substantial. 

\begin{algorithm}[t] 
\caption{Accel. Shifted Truncated En.-Norm Augmentation (\textbf{ES-TRA})}
\hspace*{\algorithmicindent} \textbf{Input}: A right hand side $\rhs$, an operator sample $\sampMat \sim \matDist$ with corresponding parameters $\sampParam \in \paramSpace$, a choice of second moment matrix $\autoCor$, a choice of matrix $\modMat$ satisfying the compatibility conditions, sample count $M$. \\
\hspace*{\algorithmicindent} \textbf{Output}: An estimate $\impSol$ of $\truMat^{-1} \rhs$. \\
\begin{algorithmic}[1] \label{alg:truncatedEnergyAug}
\State Factorize/preprocess $\sampMat$ to precompute $\sampMat^{-1}$ if necessary.
\State Draw $M$ i.i.d. bootstrap samples $\sampBootstrapSampMat{1}, ..., \sampBootstrapSampMat{M} \sim \bootstrapDist$.
\State For each $\sampBootstrapSampMat{i}$, perform power method to assign
\begin{equation*}
    \basepointfactorFuncOf{\sampBootstrapSampMat{i}} \gets \lim_{k \rightarrow \infty} \frac{\| (\sampBootstrapSampMat{i} \sampMat^{-1} )^k \trvec \|_{\sampMat^{-1}}}{\| (\sampBootstrapSampMat{i} \sampMat^{-1} )^{k - 1} \trvec \|_{\sampMat^{-1}}} \,.
\end{equation*}
\State Draw $M$ i.i.d. bootstrap samples $\sampSampleTrVec{1}, ..., \sampSampleTrVec{M} \sim \mathcal{N}(\vc{0}, \autoCor)$.
\State Assign 
\begin{equation*}
    \bootstrapMonteCarloAugFac \gets \frac{\sum_{i = 0}^M \sum_{k = 0}^\infty \bootAccelWindowFuncBP{N}{k}{\sampBootstrapSampMat{i}}  \, \basepointfactorFuncOf{\sampBootstrapSampMat{i}}^{-k-2}\, \sampSampleTrVec{i} ^T \modMat^T \sampMat^{-1} (\id - \basepointfactorFuncOf{\sampBootstrapSampMat{i}} \sampBootstrapSampMat{i} \sampMat^{-1})^k \sampSampleTrVec{i} }{\sum_{i = 0}^M \sum_{k = 0}^\infty \bootAccelWindowFuncBPDenom{N}{k}{\sampBootstrapSampMat{i}} \, \basepointfactorFuncOf{\sampBootstrapSampMat{i}}^{-k-2} \, \sampSampleTrVec{i}^T \modMat^T \sampMat^{-1} (\id - \basepointfactorFuncOf{\sampBootstrapSampMat{i}} \sampBootstrapSampMat{i} \sampMat^{-1})^k \modMat \sampSampleTrVec{i}} \,,
\end{equation*}
where
\begin{equation*}
    \windowFuncBP{N}{k} = \begin{cases}
        (k + 1) - \sum_{j = k}^N (1 - \basepointfactor^{-1})^{j - k} & k \leq N \\
        0 & \text{o.w.}
    \end{cases} \,, \quad
    \windowFuncBPDenom{N}{k} = \begin{cases}
        k + 1 & k \leq N \\
        0 & \text{o.w.}
    \end{cases} \,,
\end{equation*}
\State Clamp $\bootstrapMonteCarloAugFac \gets \max(0, \bootstrapMonteCarloAugFac)$.
\State Assign $\impSol \gets (\sampMat^{-1} - \bootstrapMonteCarloAugFac \sampMat^{-1} \modMat) \rhs$
\State Return $\impSol$.
\end{algorithmic}
\end{algorithm}

\section{Numerical Experiments} \label{sec:numerics}

In this section, we present numerical experiments to benchmark the above methods. We compare a number of different variations of operator shifting:

\begin{enumerate}
    \item \textbf{Naive}: Naive solve of the system $\sampMat \sampSol = \rhs$, by inverting the system directly without modifying the operator $\sampMat$.
    \item \textbf{GS} \emph{(General Operator Shifting)}: The method presented in \cref{sec:semiBayesian} and \cref{alg:genSemiBayesian}, where we take $\autoCor = \normMat = \id$ and let the prior on $\rhs$ be the standard normal distribution.
    \item \textbf{ES} \emph{(Energy-Norm Operator Shifting)}: The method presented in \cref{sec:energyNorm} without any truncation (i.e., computing the shift factor $\optAugFac$ directly using bootstrap and Monte-Carlo), where we take $ \autoCor  = \id$ and let the distribution of $\rhs$ be the standard normal distribution.
    \item \textbf{ES-T} \emph{(Truncated Energy Operator Shifting)}: The method presented in \cref{sec:taylorApprox}. In the numerical results, we test different orders of truncation. The order here denotes the highest power of a bootstrapped matrix sample which appears in the computation for the approximate shift factor. Furthermore, we will also test both \textbf{soft} (ES-T-S) and \textbf{hard} (ES-T-H) truncation windows, as discussed in \cref{sec:hardwindow}. 
    \item \textbf{ES-TRA} \emph{(Truncated Rebased Accelerated Energy Operator Shifting)}: The method presented in \cref{sec:accel} and \cref{alg:truncatedEnergyAug}. The order of truncation denotes the highest power of a bootstrapped matrix sample which appears in the computation. Unlike with ES-T, we will only benchmark the windowing function presented in \ref{eq:windowfuncdefs}. Like above, we take we take $ \autoCor  = \id$ and let the distribution of $\rhs$ be the standard normal distribution.
\end{enumerate}
In our numerical experiments, we measure two metrics of error:
\begin{enumerate}
    \item \textbf{R. MSE} \emph{(Relative Mean Squared Error)}: This is a normalized version of the error function $\errId{\cdot}$ with norm matrix $\normMat = \id$,
    \begin{equation}
    \begin{split}
        \text{R. MSE} &\equiv \frac{\errId{\impSol}}{\| \truMat^{-1} \|_F^2} = \frac{\E \|(\sampMat^{-1} - \augFac \augMat) - \truMat^{-1}\|_F^2}{\|\truMat^{-1}\|_F^2} \,.
    \end{split}
    \end{equation}
    Therefore, this quantity measures both the relative error of $\impSol$ from the true solution $\truSol$ in $L^2$, as well as the relative error from our augmented operator $\sampMat^{-1} - \augFac \augMat$ to the true operator $\truMat^{-1}$ in the Frobenius norm. We evaluate this quantity with Monte-Carlo and provide a $2\sigma$ estimate of the error of the Monte-Carlo procedure.
    \item \textbf{Rel. EMSE} \emph{(Relative Energy-Norm Mean Squared Error)}: This is defined like the above, except it is defined using the Energy norm $\|\cdot\|_\truMat$,
    \begin{equation}
        \text{Rel. EMSE} \equiv \frac{\err{\impSol }{\truMat}}{\| \truMat^{-1} \|_{\truMat, \id}^2} \,,
    \end{equation}
    this quantity may be of more interest than Rel. MSE in many problems, as for many elliptic systems, it more heavily penalizes high-frequency noise.
\end{enumerate}

\subsection{1D and 2D Poisson Equation on a Noisy Background}

Our first benchmark will be the Poisson equation, given by
\begin{equation} \label{eq:poisson}
\begin{split}
     \nabla \cdot (a(x) \nabla u(x)) = b(x), \qquad &\text{on } \mathcal{D} \,, \\
     u(x) = 0, \qquad &\text{on } \partial\mathcal{D} \,,
\end{split}
\end{equation}
where $a(x) > 0$ is a function determined by the physical background of the system. We discretize this equation using finite differences as follows: let $G_\mathcal{D} = (V, E)$ be a regular grid on the domain $\mathcal{D}$, with vertices $V$ and edges $E$. Let $\incidenceMat \in \R^{V \times E}$ be the (arbitrarily oriented) incidence operator of the grid, i.e.,
\begin{equation}
    \incidenceMat_{v,e} = \begin{cases} \pm 1 & v \text{ is incident to } e \\ 0 & \text{otherwise} \end{cases} \,,
\end{equation}
$\incidenceMat_{v,e}$ is positive for one of the $v$ incident to $e$ and negative for the other. The discrete approximation for the differential operator in \cref{eq:poisson} is given by
\begin{equation}
    \laplacian = -\incidenceMat \weightMat \incidenceMat^T \,,
\end{equation}
where $\weightMat \in \mathbb{R}^{E \times E}$ is a diagonal matrix whose $e, e$-th entry is the function $a$ evaluated at the midpoint of $e$. 

We suppose that we only have noisy measurements of the physical background, i.e. that the matrix $\weightMat$ is subject to some randomness, hence, in practice, we only have access to an approximate
\begin{equation}
    \sampLaplacian = -\incidenceMat  \sampWeightMat \incidenceMat^T \,,
\end{equation}
where $\sampLaplacian$ is drawn from a distribution $\matDist$, where $\truParam = (a(x_e))_{e \in E}$, i.e., the background $a$ evaluated at all the edge midpoints $x_e$. Note, to use the operator shifting method, one must prescribe a class of distributions $\matDistAt{\params}$ that we may sample from given background samples $\params$. 

In particular, the noisy background model we use for this benchmark perturbs every observation with independent multiplicative noise,
\begin{equation}
     \sampWeightMat_{e, e} = \sampParam_e = \hat{z}_e \params_e \,,
\end{equation}
where $\hat{z}_e \sim \mathcal{Z}$ i.i.d. for some positive distribution $\mathcal{Z}$ to be specified. To enforce Dirichlet boundary conditions we solve
\begin{equation}
    \begin{split}
        \sampLaplacian_{\text{int}(G_\mathcal{D}), \text{int}(G_\mathcal{D})} \vc{u}_{\text{int}(G_\mathcal{D})} + \sampLaplacian_{\text{int}(G_\mathcal{D}), \partial G_\mathcal{D}} \vc{u}_{\partial G_\mathcal{D}} &= \vc{b} \,, \\
        \vc{u}_{\partial G_\mathcal{D}} &= 0 \,,
    \end{split}
\end{equation}
where $\text{int}(G_\mathcal{D}) \subset V$ denotes the interior of the grid $G_\mathcal{D}$ and $\partial G_\mathcal{D} \subset V$ denotes the boundary, and $\sampLaplacian_{A, B}$ for $A \subset V$ and $B \subset V$ denotes the $A,B$-minor of $\sampLaplacian$. Hence, this becomes
\begin{equation} \label{eq:solvethispoisson}
    \sampMat \sampSol = \rhs \,,
\end{equation}
where $\sampMat = \sampLaplacian_{\text{int}(G_\mathcal{D}), \text{int}(G_\mathcal{D})}$ and $\rhs$ is the function $b(x)$ sampled at the interior vertices of $G_\mathcal{D}$. 

\begin{table}[]
    \centering
\begin{tabular}{r|ccllll}
Method & Order & Window & R. MSE & $\pm 2\sigma$ & R. EMSE & $\pm 2\sigma$ \\
\hline \hline
\textbf{Naive} & --- & --- & $12\%$ & $\pm 0.0352\%$ & $55.1\%$ & $\pm 0.1\%$ \\
\hline
\textbf{GS} & --- & --- & $0.59\%$ & $\pm 0.0203\%$ & $24.6\%$ & $\pm 0.448\%$ \\
\hline
\textbf{ES} & --- & --- & $4.32\%$ & $\pm 0.124\%$ & $20\%$ & $\pm 0.362\%$ \\
\hline
\textbf{ES-T} & 2 & Soft & $4.77\%$ & $\pm 0.155\%$ & $39.7\%$ & $\pm 0.723\%$ \\
\textbf{ES-T} & 4 & Soft & $1.26\%$ & $\pm 0.044\%$ & $21.5\%$ & $\pm 0.39\%$ \\
\textbf{ES-T} & 6 & Soft & $3.11\%$ & $\pm 0.0946\%$ & $20.1\%$ & $\pm 0.364\%$ \\
\hline
\textbf{ES-T} & 2 & Hard & $0.79\%$ & $\pm 0.0319\%$ & $22.9\%$ & $\pm 0.42\%$ \\
\textbf{ES-T} & 4 & Hard & $2.71\%$ & $\pm 0.0855\%$ & $20.3\%$ & $\pm 0.367\%$ \\
\hline
\textbf{ES-TRA} & 2 & --- & $0.798\%$ & $\pm 0.029\%$ & $22.7\%$ & $\pm 0.406\%$ \\
\textbf{ES-TRA} & 4 & --- & $2.88\%$ & $\pm 0.0913\%$ & $20.2\%$ & $\pm 0.366\%$ \\
\textbf{ES-TRA} & 6 & --- & $4.01\%$ & $\pm 0.117\%$ & $20\%$ & $\pm 0.354\%$ \\
\end{tabular}
    \caption{Comparison of augmentation methods for a \textbf{1D Poisson problem} on 128 grid points, where $a(x) = 1$ and $\hat{z}_e \sim \mathcal{U} \{ 0.5, 1.5 \}$. }
    \label{tab:onedpoissondiscrete}
\end{table}

\begin{table}[]
    \centering
\begin{tabular}{r|ccllll}
Method & Order & Window & R. MSE & $\pm 2\sigma$ & R. EMSE & $\pm 2\sigma$ \\
\hline \hline
\textbf{Naive} & --- & --- & $7.52\%$ & $\pm 0.0247\%$ & $58\%$ & $\pm 0.145\%$ \\
\hline
\textbf{GS} & --- & --- & $0.802\%$ & $\pm 0.0317\%$ & $32.5\%$ & $\pm 0.803\%$ \\
\hline
\textbf{ES} & --- & --- & $6.5\%$ & $\pm 0.189\%$ & $24.9\%$ & $\pm 0.546\%$ \\
\hline
\textbf{ES-T} & 2 & Soft & $3.28\%$ & $\pm 0.161\%$ & $46.6\%$ & $\pm 2.75\%$ \\
\textbf{ES-T} & 4 & Soft & $1.15\%$ & $\pm 0.0384\%$ & $29.4\%$ & $\pm 0.721\%$ \\
\textbf{ES-T} & 6 & Soft & $5.22\%$ & $\pm 0.177\%$ & $25.1\%$ & $\pm 0.524\%$ \\
\hline
\textbf{ES-T} & 2 & Hard & $1.07\%$ & $\pm 0.0385\%$ & $29.7\%$ & $\pm 0.716\%$ \\
\textbf{ES-T} & 4 & Hard & $6.27\%$ & $\pm 0.183\%$ & $25\%$ & $\pm 0.541\%$ \\
\hline
\textbf{ES-TRA} & 2 & --- & $1.28\%$ & $\pm 0.0435\%$ & $29\%$ & $\pm 0.801\%$ \\
\textbf{ES-TRA} & 4 & --- & $7.04\%$ & $\pm 0.203\%$ & $24.7\%$ & $\pm 0.507\%$ \\
\textbf{ES-TRA} & 6 & --- & $22.9\%$ & $\pm 0.633\%$ & $31.8\%$ & $\pm 0.586\%$ \\
\end{tabular}
   \caption{Comparison of augmentation methods for a \textbf{1D Poisson problem} on 128 grid points, where $a(x) = 1$ and $\hat{z}_e \sim \Gamma(\mu = 1, \sigma = 0.45)$. }
    \label{tab:onedpoissongamma}
\end{table}

\begin{table}[]
    \centering
    \begin{tabular}{r|ccllll}
Method & Order & Window & R. MSE & $\pm 2\sigma$ & R. EMSE & $\pm 2\sigma$ \\
\hline \hline
\textbf{Naive} & --- & --- & $6.43\%$ & $\pm 0.105\%$ & $45.3\%$ & $\pm 0.11\%$ \\
\hline
\textbf{GS} & --- & --- & $0.234\%$ & $\pm 0.00974\%$ & $25\%$ & $\pm 0.64\%$ \\
\hline
\textbf{ES} & --- & --- & $4.31\%$ & $\pm 0.772\%$ & $20\%$ & $\pm 0.456\%$ \\
\hline
\textbf{ES-T} & 2 & Soft & $2.57\%$ & $\pm 0.465\%$ & $35.6\%$ & $\pm 0.908\%$ \\
\textbf{ES-T} & 4 & Soft & $0.876\%$ & $\pm 0.133\%$ & $21.8\%$ & $\pm 0.504\%$ \\
\textbf{ES-T} & 6 & Soft & $2.59\%$ & $\pm 0.515\%$ & $20.3\%$ & $\pm 0.561\%$ \\
\hline
\textbf{ES-T} & 2 & Hard & $0.422\%$ & $\pm 0.039\%$ & $23.1\%$ & $\pm 0.464\%$ \\
\textbf{ES-T} & 4 & Hard & $2.13\%$ & $\pm 0.353\%$ & $20.5\%$ & $\pm 0.519\%$ \\
\hline
\textbf{ES-TRA} & 2 & --- & $0.845\%$ & $\pm 0.117\%$ & $22\%$ & $\pm 0.503\%$ \\
\textbf{ES-TRA} & 4 & --- & $3.62\%$ & $\pm 0.586\%$ & $20.1\%$ & $\pm 0.499\%$ \\
\textbf{ES-TRA} & 6 & --- & $5.61\%$ & $\pm 0.887\%$ & $20.2\%$ & $\pm 0.471\%$ \\
\end{tabular}
        \caption{Comparison of augmentation methods for a \textbf{2D Poisson problem} on 128 x 128 grid points, where $a(x) = 1$ and $\hat{z}_e \sim \mathcal{U} \{ 0.4, 1.6 \}$. }
    \label{tab:twodpoissondiscrete}
\end{table}

In \cref{tab:onedpoissondiscrete}, \cref{tab:onedpoissongamma}, and \cref{tab:twodpoissondiscrete}, we see the results of operator shifting applied to the above Poisson equation problem. As we can see, all our methods produce a substantial improvement in both relative MSE and relative energy-norm MSE --- with GS obtaining the largest reduction in $L^2$ error and ES obtaining the largest reduction in energy-norm error, as is to be expected. Moreover, note that the truncated methods ES-T and ES-TRA quickly approach the efficacy of ES as one increases the truncation order, with an order of 6 usually being enough to obtain an error comparable to baseline ES (which requires significantly more computation for large scale problems). Note also, that the energy error of ES-T is always monotonically decreasing, which agrees with \cref{thm:eag}. Moreover, note that the error of ES-TRA is not always monotonically decreasing. The unfortunate reality is that, while ES-TRA is guaranteed to converge to ES as the order becomes large, this convergence may be uneven, and is not guaranteed to be monotonic like ES-T with a soft window. We also note that the performance of our technique is comparable across different problems (i.e., 1D vs. 2D), as well as across different models of randomness (i.e., discrete vs. gamma). 

\subsection{Graph Laplacian Systems with Noisy Edge Weights}

One may extend the model in the above section to general graphs $G = (V, E)$. However, convention typically dictates that the Laplacian should be positive definite instead of negative definite, i.e.,
\begin{equation}
    \sampLaplacian = \incidenceMat \sampWeightMat \incidenceMat^T \,.
\end{equation}
In this model, we suppose that we are given a weighted graph $G$, however that the true edge weights of the graph, denoted by $\truEdge$, are unknown to us -- but we have access to a noisy observation $\sampTruEdge$ of $\truEdge$. Like in the previous setting, we suppose that the observations are independent. Therefore, the diagonal weight matrix $\sampWeightMat$ again has entries given by
\begin{equation}
    \sampWeightMat_{e, e} = \sampParam_e = \hat{z}_e \params_e \,,
\end{equation}
where $\hat{z}_e \sim \mathcal{Z}$ i.i.d. for some distribution $\mathcal{Z}$ to be specified. Like in the previous example, we solve a Dirichlet problem, arbitrarily selecting approximately six vertices as our boundary $\partial G$, whose values we set to zero. Thus, $\sampMat = \sampLaplacian_{\text{int}(G), \text{int}(G)}$ with $\text{int}(G) = V \setminus \partial G$ like before, and we again solve \cref{eq:solvethispoisson} with operator shifting.

\begin{figure}
    \centering
    \includegraphics[scale=0.4]{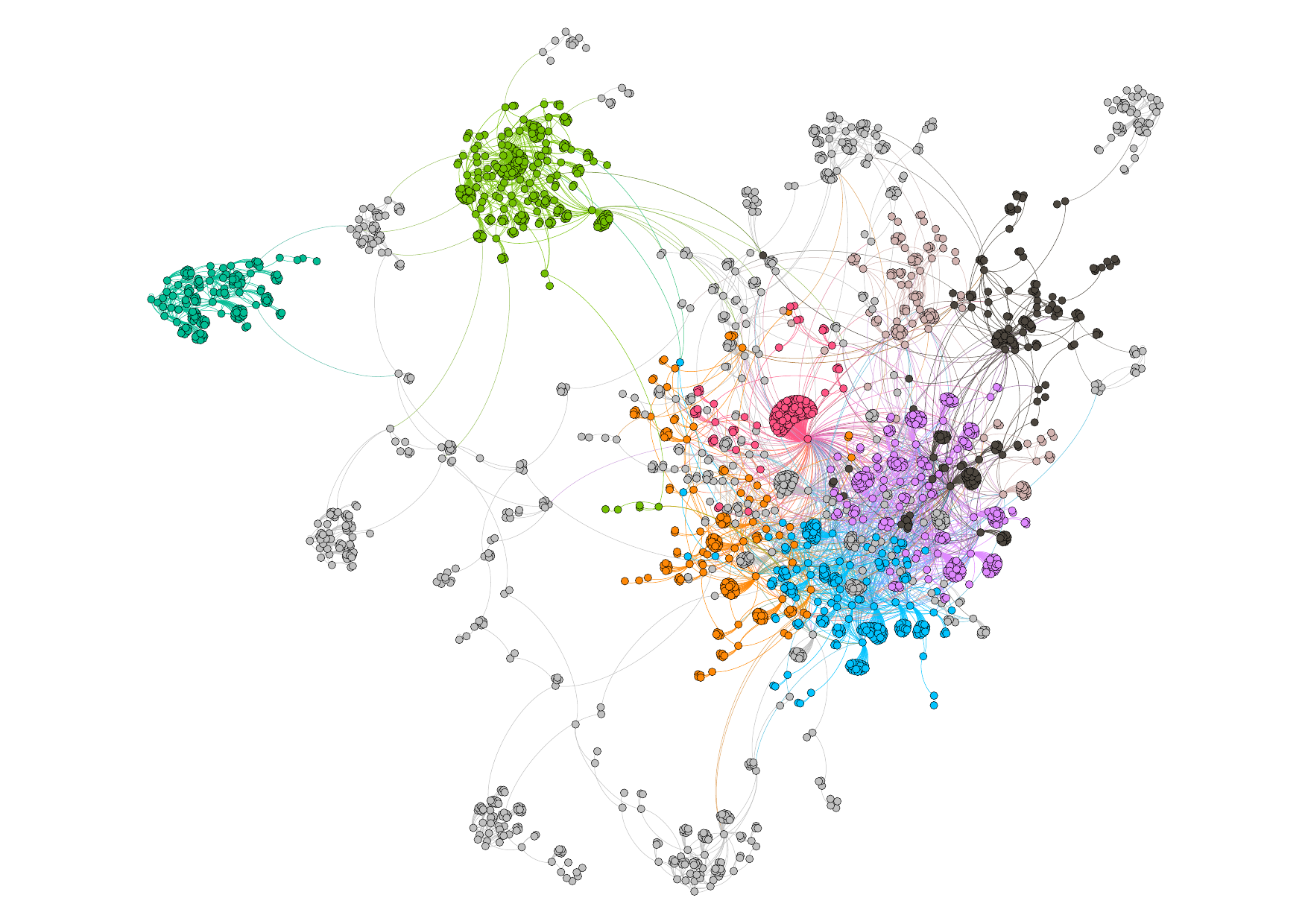}
    \caption{A visualization of the \emph{fb-pages-food} graph used in our numerical experiments. We give our performance results on this graph in \cref{tab:lap} and \cref{tab:edgedrop}. }
    \label{fig:fb-pages-food}
\end{figure}

\begin{table}[]
    \centering
\begin{tabular}{r|ccllll}
Method & Order & Window & R. MSE & $\pm 2\sigma$ & R. EMSE & $\pm 2\sigma$ \\
\hline \hline
\textbf{Naive} & --- & --- & $46.9\%$ & $\pm 0.386\%$ & $46.2\%$ & $\pm 0.14\%$ \\
\hline
\textbf{GS} & --- & --- & $17.5\%$ & $\pm 1.27\%$ & $19.1\%$ & $\pm 0.464\%$ \\
\hline
\textbf{ES} & --- & --- & $18.1\%$ & $\pm 1.37\%$ & $19.1\%$ & $\pm 0.502\%$ \\
\hline
\textbf{ES-T} & 2 & Soft & $34.9\%$ & $\pm 2.79\%$ & $34.6\%$ & $\pm 0.999\%$ \\
\textbf{ES-T} & 4 & Soft & $18.9\%$ & $\pm 1.1\%$ & $20\%$ & $\pm 0.433\%$ \\
\textbf{ES-T} & 6 & Soft & $16.9\%$ & $\pm 1.03\%$ & $18.6\%$ & $\pm 0.407\%$ \\
\hline
\textbf{ES-T} & 2 & Hard & $20.1\%$ & $\pm 1.11\%$ & $21\%$ & $\pm 0.438\%$ \\
\textbf{ES-T} & 4 & Hard & $17.8\%$ & $\pm 1.2\%$ & $19\%$ & $\pm 0.452\%$ \\
\hline
\textbf{ES-TRA} & 2 & --- & $20.6\%$ & $\pm 1.18\%$ & $21.6\%$ & $\pm 0.451\%$ \\
\textbf{ES-TRA} & 4 & --- & $18\%$ & $\pm 1.13\%$ & $19.3\%$ & $\pm 0.431\%$ \\
\textbf{ES-TRA} & 6 & --- & $18.2\%$ & $\pm 1.17\%$ & $19.3\%$ & $\pm 0.449\%$ \\
\end{tabular}
    \caption{Comparison of augmentation methods for a \textbf{graph Laplacian system}. This particular weighted graph is the \emph{fb-pages-food} graph \cite{rozemberczki2019gemsec}, visualized in \cref{fig:fb-pages-food}. In this benchmark we have $\hat{z}_e \sim \mathcal{U} \{ 0.5, 1.5 \}$.}
    \label{tab:lap}
\end{table}

\begin{figure}
    \centering
    \includegraphics[scale=0.6]{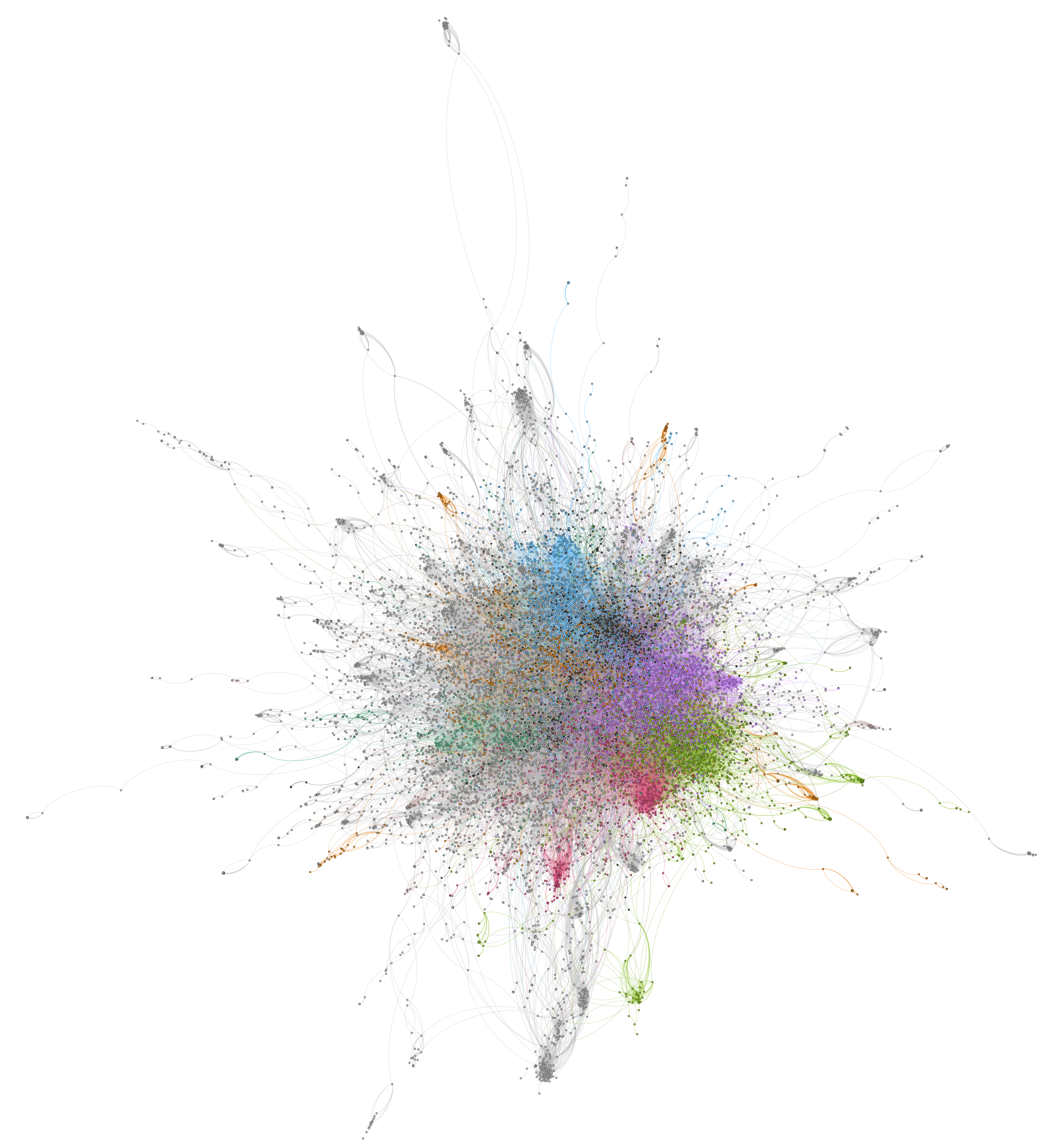}
    \caption{A visualization of the \emph{fb-pages-company} graph used in our numerical experiments. We give our performance results on this graph in \cref{tab:lap} and \cref{tab:edgedrop}. }
    \label{fig:fb-pages-company}
\end{figure}

\begin{table}[]
    \centering
    \begin{tabular}{r|ccllll}
Method & Order & Window & R. MSE & $\pm 2\sigma$ & R. EMSE & $\pm 2\sigma$ \\
\hline \hline
\textbf{Naive} & --- & --- & $33.8\%$ & $\pm 3.16\%$ & $32.4\%$ & $\pm 1.28\%$ \\
\hline
\textbf{GS} & --- & --- & $13.3\%$ & $\pm 4.92\%$ & $16.8\%$ & $\pm 2.22\%$ \\
\hline
\textbf{ES} & --- & --- & $13.7\%$ & $\pm 5.7\%$ & $16.1\%$ & $\pm 2.15\%$ \\
\hline
\textbf{ES-T} & 2 & Soft & $32.5\%$ & $\pm 17.9\%$ & $27.8\%$ & $\pm 7.67\%$ \\
\textbf{ES-T} & 4 & Soft & $18\%$ & $\pm 11.3\%$ & $18.1\%$ & $\pm 4.78\%$ \\
\textbf{ES-T} & 6 & Soft & $22.3\%$ & $\pm 12\%$ & $20.1\%$ & $\pm 4.76\%$ \\
\hline
\textbf{ES-T} & 2 & Hard & $11.8\%$ & $\pm 4.59\%$ & $15.3\%$ & $\pm 2.18\%$ \\
\textbf{ES-T} & 4 & Hard & $13.1\%$ & $\pm 4.49\%$ & $14.9\%$ & $\pm 1.7\%$ \\
\hline
\textbf{ES-TRA} & 2 & --- & $11.6\%$ & $\pm 4.08\%$ & $15.8\%$ & $\pm 1.81\%$ \\
\textbf{ES-TRA} & 4 & --- & $19.8\%$ & $\pm 5.88\%$ & $22\%$ & $\pm 2.13\%$ \\
\textbf{ES-TRA} & 6 & --- & $35.4\%$ & $\pm 11.3\%$ & $34.7\%$ & $\pm 4.17\%$ \\
\end{tabular}
    \caption{Comparison of augmentation methods for a \textbf{graph Laplacian system}. This particular weighted graph is the \emph{fb-pages-company} graph \cite{rozemberczki2019gemsec}, visualized in \cref{fig:fb-pages-company}. In this benchmark we have $\hat{z}_e \sim \mathcal{U} \{ 0.5, 1.5 \}$.}
    \label{tab:lap2}
\end{table}

We see the results of this computation in \cref{tab:lap} and \cref{tab:lap2}. The graphs shown are from the Network Repository \cite{nr}. We note that the method performs quite similarly on this problem as it does on the grid Laplacian case --- this shows that the performance of the method is consistent across different types of problems.

\subsection{Heat Steady-State with Sparsified Graph Laplacians}

In many areas of computer science, one can use graph sparsification techniques to reduce the complexity of a Laplacian system solve if one is able to tolerate some degree of approximation. These graph sparsification techniques work by randomly selecting some subset of the edges of the graph $G$ to remove and then re-weighting the remaining edges to obtain a sparsifed graph $\hat{G}$. We consider the problem of approximating the solution to a Laplacian system on $G$ using the Laplacian of $\hat{G}$. In particular, suppose we are interested in the steady-state heat distribution given by
\begin{equation}
    (\laplacian + \gamma \id) \vc{u} = \rhs \,,
\end{equation}
where $\gamma > 0$ is the coefficient of heat decay and $\rhs$ is the vector describing heat introduced to the system per unit time. However, we only have access to the topology of the sparsified $\hat{G}$ and its Laplacian $\sampLaplacian$. Naively, one could solve
\begin{equation}
    (\sampLaplacian + \gamma \id) \hat{\vc{u}} = \rhs \,.
\end{equation}
Of course, this naive solution carries a certain amount of error. Note that we can apply operator shifting to $\sampLaplacian + \gamma \id$ to obtain a more accurate solution.

In particular, for this numerical experiment, we use the sparsification model
\begin{equation}
     \sampWeightMat_{e, e} = \sampParam_e = \hat{z}_e \params_e \,
\end{equation}
where $\hat{z}_e \sim p^{-1} \text{Ber}(p)$ i.i.d., for $p \in (0, 1)$. 

\begin{table}[]
    \centering
\begin{tabular}{r|ccllll}
Method & Order & Window & R. MSE & $\pm 2\sigma$ & R. EMSE & $\pm 2\sigma$ \\
\hline \hline
\textbf{Naive} & --- & --- & $18.5\%$ & $\pm 0.0843\%$ & $26.2\%$ & $\pm 0.125\%$ \\
\hline
\textbf{GS} & --- & --- & $12.6\%$ & $\pm 0.386\%$ & $16.9\%$ & $\pm 0.613\%$ \\
\hline
\textbf{ES} & --- & --- & $13.8\%$ & $\pm 0.258\%$ & $16.9\%$ & $\pm 0.38\%$ \\
\hline
\textbf{ES-T} & 2 & Soft & $16.9\%$ & $\pm 0.771\%$ & $23.7\%$ & $\pm 1.18\%$ \\
\textbf{ES-T} & 4 & Soft & $12.7\%$ & $\pm 0.448\%$ & $17.5\%$ & $\pm 0.737\%$ \\
\textbf{ES-T} & 6 & Soft & $13\%$ & $\pm 0.379\%$ & $17.2\%$ & $\pm 0.611\%$ \\
\hline
\textbf{ES-T} & 2 & Hard & $14.1\%$ & $\pm 0.528\%$ & $20.1\%$ & $\pm 0.802\%$ \\
\textbf{ES-T} & 4 & Hard & $12.2\%$ & $\pm 0.326\%$ & $16.4\%$ & $\pm 0.498\%$ \\
\hline
\textbf{ES-TRA} & 2 & --- & $12.6\%$ & $\pm 0.404\%$ & $17\%$ & $\pm 0.627\%$ \\
\textbf{ES-TRA} & 4 & --- & $13.7\%$ & $\pm 0.315\%$ & $17\%$ & $\pm 0.493\%$ \\
\textbf{ES-TRA} & 6 & --- & $15.3\%$ & $\pm 0.318\%$ & $17.8\%$ & $\pm 0.367\%$ \\
\end{tabular}
\caption{Comparison of augmentation methods for a \textbf{sparsified graph Laplacian system}. This particular weighted graph is the \emph{fb-pages-food} graph \cite{rozemberczki2019gemsec}, visualized in \cref{fig:fb-pages-food}. In this benchmark we have $\hat{z}_e \sim \frac{1}{0.75} \text{Ber}(0.75)$ with $\gamma = 1$.}
    \label{tab:edgedrop}
\end{table}

We see in \cref{tab:edgedrop} that our methods allow for a substantial reduction in energy-norm mean squared error like in the previous two scenarios. However, this scenario seems to be more difficult for the augmentation process. Particularly, the $L^2$ reduction is not as high as with previous examples. Regardless, the fact that operator shifting functions under this regime of noise shows us that operator shifting is a technique which can be broadly applied to various problems.

\section{Conclusion}

In this paper we have presented a novel method for reducing error in elliptic systems corrupted by noise that requires only a single sample of a corrupted system. We have introduced the GS and ES methods, as well as the ES-T and ES-TRA methods for efficiently approximating ES. Moreover, we have proved multiple important theorems that underlie our methods -- this includes the error reduction bounds in \cref{thm:fullOpAug_revision} and \cref{thm:fullOpAugEnergy} for the GS and ES methods respectively, as well as monotone convergence guarantees \cref{thm:eag} and \cref{thm:steag} that provide justification and intuition for the ES-T and ES-TRA methods.

Furthermore, we have demonstrated in our numerical experiments that the operator shifting methods we presented are effective in many different scenarios and different noise models -- consistently providing a $2\times$ reduction in energy mean-squared error, and often a significantly higher reduction in $L^2$ error. We have also shown that ES-T and ES-TRA converge relatively quickly to ES, which makes these truncated method good alternatives when solving a large number of matrix systems is computationally intractable.

Our numerical results also make clear the relative benefits and trade-offs of the different augmentation methods; these are seen in \cref{tab:methodcomp}. As per these trade-offs, we recommend using ES if computation is not an issue. If computation is an issue, we recommend using hard-window (or soft-window) ES-T, depending on the scenario, and if this approximation seems not to be performing well, or the noise distribution is heavy tailed, then we recommend using ES-TRA.

\begin{table}[]
    \centering
    \begin{tabular}{r|ccccc}
Method & Computation & $L^2$ & Energy & Convergence & Monotone \\
\hline \hline
\textbf{Naive} & Lowest & --- & --- & --- & --- \\
\textbf{GS} & High & Best & Good & --- & --- \\
\textbf{ES} & High & Good & Best & --- & --- \\
\textbf{ES-T-S} & Low & Good & Better & When $\sampMat \prec 2\truMat$ & Always \\
\textbf{ES-T-H} & Low & Good & Better+ & When $\sampMat \prec 2\truMat$  & Empirically \\
\textbf{ES-TRA} & Moderate & Good- & Better- & Pointwise & No 
\end{tabular}
        \caption{Comparison of pros/cons of different augmentation methods presented in this paper. $L^2$ and energy denote reduction in $L^2$ and energy-norm error respectively. (S) and (H) denote hard and soft windows respectively. Convergence denotes whether or not the method converges to ES when the order is taken to be large, monotone denotes whether or not the truncated shift factors $\truncFac{N}$ of the method are monotonic.}
    \label{tab:methodcomp}
\end{table}

While the operator shifting framework offers a new approach to reducing error in noisy elliptic systems, there are still a number of interesting avenues for further exploration. The most obvious is, of course, the extension of the operator shifting framework machinery to the case of asymmetric systems. Unfortunately, while there is nothing preventing one from using the same approach for asymmetric systems, the question of how one would analyze such an algorithm remains open. The machinery developed within does not relatively apply, since the move from symmetric to asymmetric systems breaks a number of core tools used throughout. Since many systems of interest are indeed asymmetric, this is an important direction for future research. In addition, while we leave the optional choice of matrices $\normMat, \autoCor, \modMat, \modMatTwo$ up to the reader -- it is yet unclear how one should approach making a choice for these optional parameters in general. Finally, to judge the performance of the method in real-world problems, one could apply the techniques we've developed within to an application area where elliptic systems are corrupted by randomness --- possible aforementioned applications include structural dynamics \cite{soize2005comprehensive} or whether modelling \cite{palmer2005representing}, among a plethora of others.

\section{Acknowledgements}

The work of L.Y. is partially supported by the National Science Foundation under award DMS1818449 and DMS-2011699.


\appendix

\section{Proofs of Miscellaneous Lemmas} 

\loewnerinversionlemma*

\begin{proof}
Consider the exact second order Taylor expansion of the inverse functional on the space of positive definite matrices,
\begin{equation}
    \sampMat^{-1} = \truMat^{-1} - \truMat^{-1} (\sampMat - \truMat) \truMat^{-1} + \truMat_*^{-1} (\sampMat - \truMat) \truMat_*^{-1} (\sampMat - \truMat) \truMat_*^{-1} \,,
\end{equation}
where $\truMat_*$ is a matrix between $\truMat$ and $\sampMat$. Note that the last term is positive semi-definite because $\truMat_*$ is positive definite. Therefore,
\begin{equation}
    \sampMat^{-1} \succeq \truMat^{-1} - \truMat^{-1} (\sampMat - \truMat) \truMat^{-1} \,.
\end{equation}
Taking expectations of both sides and using the fact that $\E[\sampMat - \truMat] \preceq \mat{0}$ yields
\begin{equation}
    \E[\sampMat^{-1}] \succeq \truMat^{-1} - \truMat^{-1} \E[\sampMat - \truMat] \truMat^{-1} \succeq \truMat^{-1} \,.
\end{equation}
\end{proof}

\begin{lemma} \label{lem:momentslemma}
Let $\hat{\mat{W}}$ be a symmetric random matrix that satisfies $(1 - \varepsilon) \id \preceq \hat{\mat{W}} \prec \id$ almost surely and $\E [(\id - \hat{\mat{W}})^{-2}]$ exists. Then it is the case that:
\begin{equation}
    \E \| \hat{\mat{W}}^k \|^2_F = o(1/k^2) \,.
\end{equation}
\end{lemma}

\begin{proof}
    Note that $\hat{\mat{W}}$ is symmetric and hence can always has a spectral decomposition 
    \begin{equation}
        \hat{\mat{W}} = \hat{\mat{Q}} \hat{\mat{\Lambda}} \hat{\mat{Q}}^T \,.
    \end{equation}
    Using the above decomposition, for any positive $\gamma > 0$, we can split the matrix $\hat{\mat{W}}$ into two matrices $\hat{\mat{W}}_{\geq \gamma} + \hat{\mat{W}}_{< \gamma}$ with the properties
    \begin{equation}
        \gamma \id \preceq \hat{\mat{W}}_{\geq \gamma} \prec \id \,, \qquad -(1 - \varepsilon) \preceq \hat{\mat{W}}_{< \gamma} \prec \gamma \id \,.
    \end{equation}
    We do this by defining $\hat{\mat{\Lambda}}_{\geq \gamma}$ and $\hat{\mat{\Lambda}}_{< \gamma}$ to be $\hat{\mat{\Lambda}}$ but with all entries zeroed that don't fall within the ranges $[\gamma, 1)$ and $[-1 + \varepsilon, \gamma)$ respectively. Then we have that $\hat{\mat{\Lambda}} = \hat{\mat{\Lambda}}_{\geq \gamma} + \hat{\mat{\Lambda}}_{< \gamma}$ and therefore, we can define:
    \begin{equation}
        \hat{\mat{W}}_{\geq \gamma} \equiv \hat{\mat{Q}} \hat{\mat{\Lambda}}_{\geq \gamma} \hat{\mat{Q}}^T \,, \qquad \hat{\mat{W}}_{< \gamma} \equiv \hat{\mat{Q}} \hat{\mat{\Lambda}}_{< \gamma} \hat{\mat{Q}}^T \,.
    \end{equation}
    Moreover, since $\hat{\mat{\Lambda}}^{k} = \hat{\mat{\Lambda}}^{k}_{\geq \gamma} + \hat{\mat{\Lambda}}^{k}_{< \gamma}$, we have that
    \begin{equation}
        \hat{\mat{W}}^{k} = \hat{\mat{W}}^{k}_{\geq \gamma} + \hat{\mat{W}}^{k}_{< \gamma} \,.
    \end{equation}
    Hence, it follows that:
    \begin{equation}
        k^2 \|\hat{\mat{W}}^{k}\|_F^2 \leq 2 k^2 \|\hat{\mat{W}}^{k}_{\geq \gamma}\|_F^2 + 2 k^2 \, \|\hat{\mat{W}}^{k}_{< \gamma}\|_F^2 \,.
    \end{equation}
    Furthermore, since $\|\hat{\mat{W}}^{k}_{< \gamma}\|_F^2$ is the sum of the eigenvalues of $\hat{\mat{W}}^{2k}_{< \gamma}$, which are all bounded by $\max(1 - \varepsilon, \gamma)^{2k}$, it follows again that:
    \begin{equation} \label{eq:appxfirstbound}
        k^2 \|\hat{\mat{W}}^{k}\|_F^2 \leq 2 k^2 \, \|\hat{\mat{W}}^{k}_{\geq \gamma}\|_F^2 + 2 \dimension k^2 \cdot \max(1 - \varepsilon, \gamma)^{2k}\,.
    \end{equation}
    For the remaining term $\|\hat{\mat{W}}^{k}_{\geq \gamma}\|_F^2$, we note that for $x \in [0, 1)$,
    \begin{equation}
        k^2 x^{k} \leq 2 \sum_{i = 1}^k i x^k \leq 2 \sum_{i = 1}^k i x^i \,,
    \end{equation}
    wheras the exact Taylor expansion for $1/(1-x)^2$ to $k+1$th order has the form:
    \begin{equation}
        \frac{1}{(1 - x)^2} = \sum_{i = 1}^k i x^i + (k + 1) y^{k + 1} \geq \sum_{i = 1}^k i x^i  \,,
    \end{equation}
    where $0 \leq y \leq x$. Thus, for $x \in [0, 1)$,
    \begin{equation}
        k^2 x^{k} \leq \frac{2}{(1 - x)^2} \,.
    \end{equation}
    Hence, since all eigenvalues of $\hat{\mat{W}}^{k}_{\geq \gamma}$ lie in the range $[\gamma, 1)$, it follows that
    \begin{equation}
        \begin{split}
        k^2 \hat{\mat{W}}_{\geq \gamma}^{2k} &= k^2 \hat{\mat{W}}_{\geq \gamma}^{2k} \, (1 - \mathds{1}(\hat{\mat{W}} \preceq \gamma \id)) \\
        &\preceq \frac{1}{2} (\id - \hat{\mat{W}}_{\geq \gamma})^{-2} \, (1 - \mathds{1}(\hat{\mat{W}} \preceq \gamma \id)) \\
        &\preceq \frac{1}{2} (\id - \hat{\mat{W}})^{-2} \, (1 - \mathds{1}(\hat{\mat{W}} \preceq \gamma \id))\,,
        \end{split}
    \end{equation}
    where $ \mathds{1}(\hat{\mat{W}} \preceq \gamma \id)$ is the indicator function for the event $\{ \hat{\mat{W}} \preceq \gamma \id \}$. The first line is by virtue of the fact that $\hat{\mat{W}}_{\geq \gamma}^{2k}$ is zero on the set $\{ \hat{\mat{W}} \preceq \gamma \id \}$. Thus, we may substitute this into \cref{eq:appxfirstbound} to obtain:
    \begin{equation}
        k^2 \E \|\hat{\mat{W}}^{k}\|_F^2 \leq 2 \E [\tr((\id - \hat{\mat{W}})^{-2}) \, (1 - \mathds{1}(\hat{\mat{W}} \preceq \gamma \id)))] + 2 \dimension k^2 \cdot \max(1 - \varepsilon, \gamma)^{2k}\,.
    \end{equation}
    Now, we choose $\gamma$ to be $\gamma = 1 - 1/\sqrt{k}$. This gives 
    \begin{equation} \label{eq:almostthere}
        k^2 \E \|\hat{\mat{W}}^{k}\|_F^2 \lesssim \E [\tr((\id - \hat{\mat{W}})^{-2}) \, (1 - \mathds{1}(\hat{\mat{W}} \preceq (1 - 1/\sqrt{k})\id))] + k^2 (1- 1/\sqrt{k})^{2k}\,.
    \end{equation}

    The two terms above are quite easy to bound, note that
    \begin{equation} \label{eq:appexfact1}
        k^2 (1- 1/\sqrt{k})^{2k} \leq k^2 \exp(-2k/\sqrt{k}) = k^2 \exp(-2\sqrt{k}) = o(1) \,.
    \end{equation}
    Conversely, we have 
    \begin{equation}
        \tr((\id - \hat{\mat{W}})^{-2}) \,\mathds{1}(\hat{\mat{W}} \preceq (1 - 1/\sqrt{k})\id) \nearrow \tr((\id - \hat{\mat{W}})^{-2}) \,.
    \end{equation}
    Therefore, it follows from monotone convergence theorem and the convergence of $\E[(\id - \hat{\mat{W}})^{-2}]$ that
    \begin{equation} \label{eq:appexfact2}
        \E [\tr((\id - \hat{\mat{W}})^{-2}) \, (1 - \mathds{1}(\hat{\mat{W}} \preceq (1 - 1/\sqrt{k})\id))] = o(1) \,.
    \end{equation}
    Plugging \cref{eq:appexfact1} and \cref{eq:appexfact2} into \cref{eq:almostthere} gives the desired result,
    \begin{equation}
        \E \|\hat{\mat{W}}^{k}\|_F^2 = o(1/k^2) \,.
    \end{equation}
\end{proof}

\convergencelemma*

\begin{proof}
Via a transformation of variables, it suffices to prove the statements 
\begin{equation}
    (\id - \hat{\mat{W}})^{-1} = \sum_{k = 0}^\infty \hat{\mat{W}}^k \,, \qquad (\id - \hat{\mat{W}})^{-2} = \sum_{k = 0}^\infty (k + 1) \hat{\mat{W}}^k \,,
\end{equation}
in the mean squared Frobenius norm when $-(1 - \varepsilon)\id \prec \hat{\mat{W}} \prec \id$ almost surely and $\E[(\id - \hat{\mat{W}})^{-2}] \prec \infty$. Let us write:
\begin{equation}
    \begin{split}
        \E \left\| (\id - \hat{\mat{W}})^{-1} - \sum_{k = 0}^N \hat{\mat{W}}^k \right\|_F^2 &\leq \left( \E \left\| (\id - \hat{\mat{W}})^{-1}\right\|_F^2 \right) \left( \E \left\| \id - (\id - \hat{\mat{W}}) \sum_{k = 0}^N \hat{\mat{W}}^k  \right\|_F^2 \right) \\
        &= \left( \E \left\| (\id - \hat{\mat{W}})^{-1}\right\|_F^2 \right) \left( \E \left\| \hat{\mat{W}}^{N + 1} \right\|_F^2 \right) \\
        &=  \tr (\E (\id - \hat{\mat{W}})^{-2}) \left( \E \left\| \hat{\mat{W}}^{N + 1} \right\|_F^2 \right) \\
        &\lesssim \E \left\| \hat{\mat{W}}^{N + 1} \right\|_F^2 \rightarrow 0 \,.
    \end{split}
\end{equation} 
The first inequality above is by Cauchy-Schwartz and convergence in the last line is by \cref{lem:momentslemma}. 
\begin{equation}
    \begin{split}
        \E \left\| (\id - \hat{\mat{W}})^{-2} - \sum_{k = 0}^N (k + 1) \hat{\mat{W}}^k \right\|_F^2 &\leq \left( \E \left\| (\id - \hat{\mat{W}})^{-1}\right\|_F^2 \right)^2 \left( \E \left\| \id - (\id - \hat{\mat{W}})^2 \sum_{k = 0}^N (k + 1) \hat{\mat{W}}^k  \right\|_F^2 \right) \\
        &= \left( \tr(\E(\id - \hat{\mat{W}})^{-2}) \right)^2 \left( \E \left\| \id - (\id - \hat{\mat{W}})^2 \sum_{k = 0}^N (k + 1) \hat{\mat{W}}^k  \right\|_F^2 \right) \\
        &\lesssim \left( \E \left\| \id - (\id - \hat{\mat{W}})^2 \sum_{k = 0}^N (k + 1) \hat{\mat{W}}^k  \right\|_F^2 \right) \\
        &= \E \left\| \id - (\id - 2 \hat{\mat{W}} + \hat{\mat{W}}^2) \sum_{k = 0}^N (k + 1) \hat{\mat{W}}^k  \right\|_F^2 \\
        &= \E \left\| (N + 1)\hat{\mat{W}}^{N + 1} - N \hat{\mat{W}}^{N + 2}  \right\|_F^2 \\
        &\leq 2 \E \|(N + 1) \hat{\mat{W}}^{N + 1}\|_F^2 + 2 \E \|N \hat{\mat{W}}^{N + 2} \|_F^2 \rightarrow 0 \,,
    \end{split}
\end{equation}
where we have once again invoked \cref{lem:momentslemma} for the convergence on the last line. Note that we use Cauchy-Schwartz twice on the first line above. 
\end{proof}

\monotonesequencelemma*

\begin{proof}
Consider the following series of equivalent inequalities,
\begin{equation}
    \begin{split}
        \truncFac{N} &\geq \truncFac{N - 1} \,, \\
        \frac{\sum_{k = 1}^N a_k}{\sum_{k = 1}^{N} b_k} &\geq \frac{\sum_{k = 1}^{N - 1} a_k}{\sum_{k = 1}^{N - 1} b_k} \,, \\
        \left( \sum_{k = 1}^N a_k \right) \left(\sum_{k = 1}^{N - 1} b_k\right) &\geq \left(\sum_{k = 1}^{N} b_k\right) \left( \sum_{k = 1}^{N - 1} a_k\right) \,,\\
        \left(a_N + \sum_{k = 1}^{N - 1} a_k\right) \left(\sum_{k = 1}^{N - 1} b_k\right) &\geq \left(b_N + \sum_{k = 1}^{N - 1} b_k \right) \left( \sum_{k = 1}^{N - 1} a_k\right) \,,\\
        \sum_{k = 1}^{N - 1} a_N b_k &\geq \sum_{k = 1}^{N - 1} b_N a_k
    \end{split}
\end{equation}
 The last inequality above is clearly true because the terms in the sum on the left dominate their corresponding terms on the right. Therefore, the first inequality is also true.
 \end{proof}

\inequalitylemma*

\begin{proof}
We make a series of simplifications. The first assumption is that $\hat{\mat{X}}$ is a uniform random variable over a set of (not necessarily distinct) outcomes $\{ \mat{X}_1, ..., \mat{X}_N \}$, i.e., has distribution
\begin{equation}
    \mathcal{D} = \frac{1}{N} \sum_{i = 1}^N \delta_{\mat{X}_i} \,,
\end{equation}
where $\delta_{\mat{X}_k}$ is the delta distribution supported at $\mat{X}_k$. Since any continuous distribution can be approximated by a series of discrete distributions of this above form, it suffices to prove the statement for discrete distributions of the form above. Under this assumption, the inequality \cref{eq:transformationinequality} becomes
\begin{equation}
    \sum_{k,l} \langle \mat{X}^i_l \rangle_{\sMat} \, \langle \mat{X}^j_k \rangle_{\sMat} \leq \sum_{i,l} \langle \mat{X}^{i + r}_l \rangle_{\sMat} \, \langle \mat{X}^{j - r}_k \rangle_{\sMat} \,.
\end{equation}
Therefore, it suffices to consider individual pairs $\{i, l\}$ under the sum and show that for any $\mat{A}, \mat{B} \succeq \mat{0}$,
\begin{equation} \label{eq:tracenightmare}
\begin{split}
    &\langle \mat{B}^i \rangle_{\sMat} \, \langle \mat{A}^j \rangle_{\sMat} +  \langle \mat{A}^i \rangle_{\sMat} \, \langle \mat{B}^j \rangle_{\sMat} \\
    &\leq \langle \mat{A}^{j - r} \rangle_{\sMat} \, \langle \mat{B}^{i + r} \rangle_{\sMat} +  \langle \mat{B}^{j - r} \rangle_{\sMat} \, \langle \mat{A}^{i + r} \rangle_{\sMat} \,.
\end{split}
\end{equation}
Let $\lambda_i(\mat{A})$ and $\lambda_i(\mat{B})$ denote the eigenvalues of $\mat{A}, \mat{B}$ respectively. Note that since $\langle \cdot \rangle_{\sMat}$ is a linear functional it satisfies
\begin{equation}
    \langle \mat{A}^j \rangle_{\sMat} = \sum_i s_i \lambda_i(\mat{A})^j \,,
\end{equation}
for some $s_i \geq 0$ that don't depend on $j$. Therefore, \cref{eq:tracenightmare} amounts to
\begin{equation}
\begin{split}
     &\sum_{i,l} s_i s_l ' \left[ \lambda_i(\mat{A})^j \lambda_l(\mat{B})^k + \lambda_i(\mat{B})^j \lambda_l(\mat{A})^k \right] \\
     &\leq \sum_{i,l} s_i s_l ' \left[ \lambda_i(\mat{A})^{j - r} \lambda_l(\mat{B})^{k + r} + \lambda_i(\mat{B})^{j - r} \lambda_l(\mat{A})^{k + r} \right] \,.
\end{split}
\end{equation}
Since $s_i$ and $s_i'$ are non-negative, it therefore suffices to prove that for any non-negative $a, b \geq 0$,
\begin{equation} \label{eq:lessnightmare}
    a^j b^k + b^j a^k \leq a^{j - r} b^{k + r} + b^{j - r} a^{k + r}
\end{equation}

To prove \cref{eq:lessnightmare}, define the function
\begin{equation}
    C_{a, b, s}(\Delta) = a^{s - \Delta} b^{s + \Delta} + b^{s - \Delta} a^{s + \Delta} = 2 a^s b^s \cosh\left(\Delta \log(a/b) \right) \,.
\end{equation}
If we take $s = (k + j) / 2$, the claim \cref{eq:lessnightmare} can be rephrased as
\begin{equation}
    C_{a, b, s}\left(\frac{k - j}{2}\right) \leq C_{a, b, s}\left(\frac{k - j}{2} + r\right) \,,
\end{equation}
so it suffices to prove $C_{a, b, s}$ is monotonic in $\Delta$ for $\Delta \geq 0$ --- and this follows from the fact that $\cosh(x)$ is monotonically increasing for $x \geq 0$ and monotonically decreasing for $x \leq 0$. 
\end{proof}

\section{Source Code} \label{sec:source}

For reproducibility and reference purposes, we provide an implementation of all the algorithms in this paper and the corresponding benchmarks, \newline
\begin{center}
    \url{https://github.com/UniqueUpToPermutation/OperatorShifting}. 
\end{center}

\bibliographystyle{siamplain}
\bibliography{references}
\end{document}